\begin{document}
\providecommand{\keywords}[1]{\textbf{\textit{Keywords: }} #1}
\newtheorem{theorem}{Theorem}[section]
\newtheorem{lemma}[theorem]{Lemma}
\newtheorem{proposition}[theorem]{Proposition}
\newtheorem{corollary}[theorem]{Corollary}
\theoremstyle{definition}
\newtheorem{definition}{Definition}[section]
\theoremstyle{remark}
\newtheorem{remark}{\bf Remark}
\newtheorem{conjecture}{\bf Conjecture}
\newtheorem{question}{\bf Question}
\newtheorem{example}{\bf Example}

\def\p{\mathfrak{p}}
\def\q{\mathfrak{q}}
\def\s{\mathfrak{S}}
\def\Mon{\mathrm{Mon}}
\def\Gal{\mathrm{Gal}}
\def\Ker{\mathrm{Ker}}
\def\Coker{\mathrm{Coker}}
\newcommand{\cc}{{\mathbb{C}}}   
\newcommand{\ff}{{\mathbb{F}}}  
\newcommand{\nn}{{\mathbb{N}}}   
\newcommand{\qq}{{\mathbb{Q}}}  
\newcommand{\rr}{{\mathbb{R}}}   
\newcommand{\zz}{{\mathbb{Z}}}

\title{On sets of rational functions which locally represent all of $\mathbb{Q}$}
\author{Benjamin Klahn}
\author{Joachim K\"onig}
\address{Graz University of Technology, Institute of Analysis and Number Theory, Kopernikusgasse 24/II, 8010 Graz, Austria}
\address{Korea National University of Education, Department of Mathematics Education, 28173 Cheongju, South Korea}
\begin{abstract}
We investigate finite sets of rational functions $\{ f_{1},f_{2}, \dots, f_{r} \}$ defined over some number field $K$ satisfying that any $t_{0} \in K$ is a $K_{p}$-value of one of the functions $f_{i}$ for almost all primes $p$ of $K$. We give strong necessary conditions on the shape of functions appearing in a minimal set with this property, as well as numerous concrete examples showing that these necessary conditions are in a way also close to sufficient. We connect the problem to well-studied concepts such as intersective polynomials and arithmetically exceptional functions.
\end{abstract}

\maketitle

\section{Introduction and main results}
\label{sec:intro}
Given a rational number $\alpha$ and a sufficiently large prime $p$, one of the numbers $\alpha$, $\alpha-1$ and $\frac{\alpha-1}{\alpha}$ is a square in $\mathbb{Q}_p$. Indeed, this is a direct consequence of the multiplicativity of the Legendre symbol. Moreover, this fact translates directly into a fact about rational functions: letting $f_1(X) = X^2$, $f_2(X) = X^2+1$ and $f_3(X) = \frac{1}{1-X^2}\in \mathbb{Q}(X)$, for every $\alpha\in \mathbb{Q}$ and every sufficiently large prime $p$ (depending on $\alpha$), there exists $\beta\in \mathbb{Q}_p$ such that one of $f_1(\beta)$, $f_2(\beta)$ and $f_3(\beta)$ equals $\alpha$. In this paper, we will investigate this curious property of a given finite set of rational functions in greater generality. To this end, we first make some definitions.

For a field $F$ and a rational function $f\in F(X)$, say that $\alpha\in F$ is an $F$-value of $f$ if there exists $\beta\in F\cup \{\infty\}$ such that $f(\beta)=\alpha$.

\begin{definition}
Let $K$ be a number field and $f_1,\dots, f_r\in K(X)$ rational functions. 
We say that $t_0\in K$ is a pseudo-value of $\{f_1,\dots, f_r\}$ if 
there exists a finite set $S$ of primes of $K$ such that for all primes $p$ of $K$ outside of $S$, there exists some $i\in \{1,\dots, r\}$ such that $t_0$ is a $K_p$-value of $f_i$. If $t_0$ is a pseudo-value of $\{f_1,\dots, f_r\}$, but not a $K$-value of any $f_i$ ($i=1,\dots, r$), we say that $t_0$ is a fake value of $\{f_1,\dots, f_r\}$. 
Say that $f_1,\dots, f_r$ locally represent $K$, if all elements of $K$ are pseudo-values of $f_1,\dots, f_r$. Finally, call such $\{f_1,\dots, f_r\}$ a minimal locally representing set if no proper subset locally represents $K$.
\end{definition}

Of course, a single rational function of degree $1$ locally represents its field of definition, since it even (globally) attains any value. In contrast to this, the sets of $K$-values of finitely many rational functions of degree $>1$ over a number field $K$ can never cover all of $K$; this allows locally representing sets to be interpreted as counterexamples to a local-global principle. The idea of locally representing sets can be traced back to the work of Fried, notably \cite[(4.5) and Example 7]{Fried74}, which, within a more general investigation of mod-$p$ value sets, briefly mentions a very similar (and after all equivalent) property and gives a class of examples.   
Note also that for $r=1$, the definition of fake values coincides with the one in \cite{Corvaja}. That paper also identifies situations in which the set of fake values of a rational function (or a more general cover) is infinite. 
The property of sets of (non-linear) rational functions to locally represent all of $K$ may be seen as the strongest possible form of this phenomenon; note that this can no longer happen with $r=1$, see Corollary \ref{cor:transl3}b). We propose the following general question.

\begin{question}
\label{ques:2}
Let $K$ be a number field. 
%\begin{itemize}
%\item[a)] What are the minimal (with respect to inclusion) sets $\{f_1,\dots, f_r\}$ of (geometrically) indecomposable rational functions $f_i\in K(X)$ such that %$\cap_{p} (\cup_{i=1}^r f_i(K_p)$ is not a thin set (where the intersection is taken over all primes $p$ of $O_K$)?
%the set of fake values of $\{f_1,\dots, f_r\}$ is not a thin subset of $K$?
%\marginpar{Part a) seems strange now that the corresponding Prop.2.2 has been moved.}
%\item[b)] 
 What are the minimal locally representing sets $\{f_1,\dots, f_r\}$ of geometrically indecomposable rational functions $f_i\in K(X)$?
%\end{itemize}
%\marginpar{May want to give a name to sets of functions fulfilling this?}
\end{question}

%As we will see, both parts of Question \ref{ques:2} are in fact equivalent. 
Here, a rational function $f\in K(X)$ is called {\it indecomposable} (resp., {\it geometrically indecomposable}) if it cannot be written as a composition $f = g\circ h$ of two functions $g,h \in K(X)$ (resp. $\in \overline{K}(X)$) of degree $>1$. 
%Here, $f_i$ is called geometrically indecomposable if it cannot be written as a composition of two rational functions of degree $>1$ over $\overline{K}$. 
Of course Question
~\ref{ques:2} may also be asked without the indecomposability assumption. It is nevertheless a reasonable assumption in this context, as explained in Section \ref{sec:firstobs} (see in  particular Lemma \ref{lem:firstobs}  and Corollary \ref{cor:transl2}),

While we do not attempt to answer Question \ref{ques:2} in full, we will reasonably classify the possible shapes of the individual functions $f_i$. 
After having seen an easy example of how several rational functions combined may ``work together" to locally represent all of $\mathbb{Q}$, it may come as a surprise that ``most" rational functions are in fact useless for creating such examples. This is the content of our first main result Theorem \ref{thm:main}.

To state our results we recall some standard notions for rational functions. For a rational function $f \in K(X)$ defined over some number field $K$ write $f=g/h$ where $g,h \in K[X]$ are coprime. Then we denote by $\mathrm{Split}(f(X)-t/K(t)):=\mathrm{Split}(g(X)-th(X)/K(t))$ the \textit{splitting field} of $f(X)-t$. %\marginpar{JK: The roots notation is actually not invoked before Section 2. Move there?}
%\marginpar{BK: Is the genus also defined over non algebraically closed fields? JK: Yes, purely function field theoretic. }
\begin{theorem}
\label{thm:main}
Given any $r\in \mathbb{N}$, there exists a constant $N:=N(r)\in \mathbb{N}$ such that the following holds: Let $K$ be a number field, and let $f_1,\dots f_r\in K(X)$ be a minimal collection of geometrically indecomposable rational functions locally representing $K$.
% minimal in the sense of Question \ref{ques:2}. 
Then each $f_i$ fulfills one of the following:
\begin{itemize}
\item[1)] $\deg(f_i) \le N$, or 
\item[2)]$\mathrm{Split}(f_i(X)-t/K(t))$ is of genus $\tilde{g}\le 1$.
%or: $f_i$ is linearly related, over $\mathbb{C}$, to a monomial $X^p$, a Chebyshev polynomial $T_p(X)$ or arises via a p-isogeny or endomorphism of some elliptic curve.
\end{itemize}
%linearly related over $\mathbb{C}$ to a monomial $X^p$ or a Chebyshev polynomial $T_p(X)$ for a suitable prime $p$.
\end{theorem}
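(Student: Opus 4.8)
The plan is to reformulate local representation as a purely group-theoretic covering condition via Chebotarev, and then to use the classification of monodromy groups of bounded-genus covers to force the stated dichotomy. First I would fix $t_0\in K$ and let $L_{t_0}=\mathrm{Split}(\prod_{i=1}^r (f_i(X)-t_0)/K)$ be the compositum of the splitting fields of the specialized polynomials, with $G_{t_0}=\mathrm{Gal}(L_{t_0}/K)$ a subdirect product of the specialized groups $G_{i,t_0}$ acting on the root sets $R_i=f_i^{-1}(t_0)$. For all but finitely many primes $p$, the element $t_0$ is a $K_p$-value of $f_i$ exactly when $\mathrm{Frob}_p$ has a fixed point on $R_i$ (equivalently, $f_i(X)-t_0$ acquires a $K_p$-rational root). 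Since by Chebotarev every conjugacy class of $G_{t_0}$ is the Frobenius of a positive density of primes, the set $\{f_1,\dots,f_r\}$ locally represents $K$ if and only if for every $t_0\in K$ each $(g_1,\dots,g_r)\in G_{t_0}$ has at least one coordinate $g_i$ with a fixed point on $R_i$; contributions from $\infty$ and from ramified primes are finite and absorbed into the exceptional set $S$.

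Next I would pass to a generic specialization. By Hilbert irreducibility one finds $t_0$ for which each $G_{i,t_0}$ equals the full arithmetic monodromy group $G_i=\mathrm{Gal}(f_i(X)-t/K(t))$, and for which the entanglement of the fields $\mathrm{Split}(f_i(X)-t_0)$ is controlled by the entanglement of the generic splitting fields over $\overline{K}(t)$ together with a bounded constant-field extension; geometric indecomposability makes each $G_i$ primitive. A clean constraint then comes from Burnside's lemma: since each $G_{i,t_0}$ is transitive, the average number of fixed points of a random element — equivalently the average number of $K_p$-roots of $f_i(X)-t_0$ — is exactly $1$. In particular the derangement probability $\delta_i:=\mathrm{Prob}[g_i\ \text{fixed-point-free}]$ is strictly positive, so a single non-linear $f_i$ never suffices, and coverage by the whole set can only arise from genuine correlation between coordinates, i.e.\ from the splitting fields sharing common subfields.

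I would then exploit minimality together with the bounded-genus classification. Minimality forces, for each $i$, a $t_0$ and an element of $G_{t_0}$ whose coordinates are all fixed-point-free except possibly the $i$-th, so $f_i$ must carry a nonnegligible share of the covering, which is possible only if its splitting field is sufficiently entangled with the others. Suppose now $\mathrm{Split}(f_i(X)-t/K(t))$ has genus $\ge 2$ while $\deg f_i$ is large. The Riemann--Hurwitz formula for the Galois cover attached to the geometric monodromy group $\hat{G}_i$, combined with the classification of primitive groups arising as monodromy of covers of bounded genus (the genus-$g$ problem, i.e.\ the resolved Guralnick--Thompson circle of results), restricts $\hat{G}_i$ for large degree to the natural actions of $A_{n_i}$ or $S_{n_i}$. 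For these the splitting field is linearly disjoint over $\overline{K}$ from those of the remaining functions up to a bounded, $r$-dependent overlap, while $\delta_i$ stays bounded below by an absolute constant. Hence the $i$-th coordinate becomes asymptotically independent of the others, destroying the correlation identified above, and one of two things must fail: either the product of derangement probabilities is positive and some $t_0$ is not locally represented, or $f_i$ may be deleted without affecting coverage, contradicting minimality. This contradiction yields $\deg f_i\le N(r)$ whenever the genus exceeds $1$.

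The main obstacle is this third step, on two counts. Invoking the bounded-genus classification to pin down $\hat G_i$ as $A_{n_i}$ or $S_{n_i}$ in large degree is the deepest input, and one must track the \emph{arithmetic} monodromy group $G_i$ and the constant-field entanglement rather than merely the geometric $\hat G_i$, since local representation is an arithmetic statement; this is precisely where the connection to arithmetically exceptional functions enters. Quantifying the bounded entanglement and converting the heuristic independence into a rigorous covering estimate — so that the resulting threshold depends on $r$ but on nothing else — is the delicate technical core, requiring careful bookkeeping of how the $r$ splitting fields can overlap and a lower bound for derangement proportions in large primitive groups that is uniform across the relevant families.
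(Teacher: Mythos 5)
Your first two steps are sound and match the paper's own reduction: the translation of ``locally representing'' into the condition that every element of the generic Galois group $G=\mathrm{Gal}(\prod_i(f_i(X)-t)/K(t))$ fixes a root (the paper's Lemma \ref{lem:transl}, proved exactly via Hilbert irreducibility and Chebotarev as you outline), and the observation that transitivity forces derangements, so a single function never suffices. The difficulty is entirely in your third step, and there the proposal has two genuine gaps. First, the classification input is misquoted: for large-degree geometrically indecomposable $f$ with splitting field of genus $\ge 2$, the resolved Guralnick--Thompson circle of results (Theorem \ref{thm:OldClassification}) does \emph{not} restrict the monodromy to the natural action of $A_{n_i}$ or $S_{n_i}$; it allows $\Mon(f_i)$ to be abstractly alternating/symmetric in an a priori unknown action, or of product type $A_m^t<\Mon(f_i)\le \mathrm{Aut}(A_m)^t\wr S_t$ with $t\le 8$ (and even the sharper Neftin--Zieve theorem still permits the action on $2$-subsets and a degree-$\ell^2$ product action). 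Any degree bound must therefore relate $\deg(f_i)$ to the alternating parameter $m$ through these actions, which your argument never does.

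Second, and more fundamentally, the ``asymptotic independence'' heuristic fails precisely in the case that matters: several of the $f_j$ can have \emph{identical} splitting fields $\Omega_j$ over $K(t)$ with large nonabelian monodromy --- the paper's Example \ref{ex:sporadic} ($A_5$ acting on $5$, $6$, $10$ points; $M_{11}$ on $11$ and $12$ points; $P\Gamma L_2(8)$ on $9$ and $28$ points) shows this genuinely occurs. For such functions there is no independence whatsoever, the product-of-derangement-probabilities argument collapses, and deleting one of them is not obviously harmless; your dichotomy (``some $t_0$ is missed, or $f_i$ can be deleted'') covers only the two easy extremes. The paper closes exactly this hole with two ingredients you are missing: (i) a field-theoretic disjointness lemma (Lemma \ref{nonSolvableMess} and Corollary \ref{cosetFixedpoint}), which shows that after passing to the maximal subextension whose composition factors avoid $A_m$, the minimal normal subgroups $N_{i}\cong A_m^{t_i}$ of the large-degree functions split off as a direct product, so that coverage of a suitable coset $\sigma N_{i_0}$ must come \emph{only} from functions $f_j$ with $\Omega_j=\Omega_{i_0}$; and (ii) the quantitative group-theoretic theorem of Bubboloni--Praeger--Spiga (Proposition \ref{prop:groupcover}, extended to product actions by Lemma \ref{lem:ProductFixedPointFree}), stating that covering a coset of $A_m$ by conjugates of $r$ proper subgroups not containing $A_m$ forces $r>cm$. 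It is this last result, not any derangement-proportion bound, that produces the bound $m\le c^{-1}r$ and hence $\deg(f_i)\le N(r)$; uniform lower bounds on derangement proportions cannot by themselves rule out that a bounded number of highly correlated actions of one group $S_m$ cover all of $S_m$, since whether they do is exactly the normal covering number question.
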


The proof of Theorem \ref{thm:main} rests on a combination of group-theoretical arguments based on the concept of ``intersective polynomials" with classification results for monodromy groups of indecomposable rational functions, initiated by the ``Guralnick-Thompson conjecture" (see \cite{GT}) on composition factors of genus zero monodromy groups. 

Recent (and so far unpublished) results by Neftin and Zieve (see \cite{NZ}) allow to conclude that the constant $N$ in Theorem \ref{thm:main} is in fact absolute, i.e., that the following holds:

{\bf Theorem} \ref{thm:main}'.
There exists an absolute bound $N\in \mathbb{N}$ such that the following holds: Let $K$ be a number field, let $r\in \mathbb{N}$ and let $f_1,\dots f_r\in K(X)$ be a minimal collection of geometrically indecomposable rational functions locally representing $K$.
% minimal in the sense of Question \ref{ques:2}. 
Then each $f_i$ fulfills one of the following:
\begin{itemize}
\item[1)] $\deg(f_i) \le N$, or 
\item[2)]$\mathrm{Split}(f_i(X)-t/K(t))$ is of genus $\tilde{g}\le 1$.
%or: $f_i$ is linearly related, over $\mathbb{C}$, to a monomial $X^p$, a Chebyshev polynomial $T_p(X)$ or arises via a p-isogeny or endomorphism of some elliptic curve.
\end{itemize}

Note that the condition in Case 2) for the splitting field to be of genus $\le 1$ is very restrictive. The respective rational functions have been explicitly classified, and in particular it is known that the monodromy group $\mathrm{Gal}(f_i(X)-t/K(t))$ is either solvable or embeds into $S_5$, cf., Sections \ref{sec:gal_genus0} and \ref{sec:gal_genus1}. 
In particular, Theorem \ref{thm:main}' implies that ``generic" rational functions, namely such with monodromy group $S_n$, cannot be part of a set as in Question \ref{ques:2}, except in certain small degrees. For this ``generic" case, we can however make the result much more explicit.

\begin{theorem}
\label{thm:sym}
Let $K$ be a number field, and let $f_1,\dots, f_r\in K(X)$ be rational functions of degree $d_1,\dots, d_r$ with $\mathrm{Gal}(f_i(X)-t/K(t)) = S_{d_i}$ or $A_{d_i}$ ($i=1,\dots, r$). If $\{f_1,\dots, f_r\}$ is a minimal locally representing set, then all $d_i$ are in $\{2,3,4,6\}$.
\end{theorem}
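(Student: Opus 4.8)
The plan is to translate the arithmetic condition into a purely group-theoretic one via Chebotarev, and then to exploit the rigid normal-subgroup structure of $S_n$ and $A_n$. For a fixed $t_0\in K$, let $\mathcal G(t_0)=\Gal(L/K)$ where $L$ is the compositum of the splitting fields of the $f_i(X)-t_0$ over $K$; then $\mathcal G(t_0)$ embeds as a subdirect product into $\prod_i \Gal(f_i(X)-t_0/K)$, each factor acting on the $d_i$ roots of $f_i(X)-t_0$. Since $f_i(X)-t_0$ has a root in $K_p$ for a prime $p$ of good reduction exactly when $\mathrm{Frob}_p$ fixes a root, Chebotarev shows that $t_0$ is a pseudo-value of $\{f_1,\dots,f_r\}$ if and only if $\mathcal G(t_0)$ contains no element acting as a derangement (fixed-point-free permutation) in every one of the $r$ coordinates; call such an element a \emph{total derangement}. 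Thus $\{f_1,\dots,f_r\}$ locally represents $K$ iff $\mathcal G(t_0)$ has no total derangement for every $t_0\in K$, and minimality means that for each $i$ there is some $t_0$ for which the projection away from the $i$-th coordinate does admit a total derangement.

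First I would fix, via Hilbert irreducibility, a generic $t_0$ at which $\Gal(f_i(X)-t_0/K)$ equals the full monodromy group $S_{d_i}$ or $A_{d_i}$ in its natural degree-$d_i$ action. By Goursat's lemma together with the fact that for $n\ge 5$ the group $A_n$ is simple and $A_n$ is the only proper nontrivial normal subgroup of $S_n$, the only possible ``links'' between two distinct factors are (i) an identification of the sign (equivalently, discriminant) quotients $\zz/2$ of two $S$-type factors, or (ii) a diagonal identification of two isomorphic factors through an automorphism. Theorem~\ref{thm:main}' already forces every $d_i$ to be bounded (the splitting field of an $S_n$- or $A_n$-cover has genus $\le 1$ only for $n\le 5$), so it suffices to rule out the values $d_i=5$ and $d_i\ge 7$.

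The heart of the argument is that for $n\ge 5$ with $n\ne 6$ such a factor is always redundant. On one hand, $S_n$ contains derangements of both signs for every $n\ge 4$ (for instance an $n$-cycle together with a $(2,n-2)$- or $(3,n-3)$-type element), so a sign-link as in (i) never prevents the corresponding coordinate from being filled by a derangement of the prescribed parity. On the other hand, for $n\ne 6$ every automorphism of $A_n$ (and of $S_n$) is induced by conjugation in $S_n$ and hence preserves cycle type, so it carries derangements of the natural action to derangements; thus a diagonal link as in (ii) also transports total derangements faithfully. Combining these, any total derangement of the remaining $r-1$ coordinates extends to a compatible derangement in the $n$-th coordinate, so deleting $f_i$ cannot destroy the no-total-derangement property and the set fails to be minimal. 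This excludes $d_i=5$ and every $d_i\ge 7$.

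Finally, the exceptional degree $6$ survives precisely because the outer automorphism of $S_6$ (equivalently of $A_6$) fails to preserve cycle type: it interchanges the fixed-point-free class $2^3$ with the transposition class $2\,1^4$ (and $3^2$ with $3\,1^3$, etc.), so a diagonal link of type (ii) through this outer automorphism can force the sixth coordinate to have a fixed point exactly when the linked coordinate is deranged --- a genuine obstruction that makes such a factor non-redundant. The small degrees $2,3,4$ are likewise not excluded: here derangements are confined to a single parity ($S_2$ has only the odd derangement, $S_3$ only even ones) or the factors admit the richer quotients $S_4\twoheadrightarrow S_3$ and $A_4\twoheadrightarrow \zz/3$, and sign-links can then be made inconsistent with the parities forced by a total derangement, exactly as in the motivating set $\{X^2,\,X^2+1,\,(1-X^2)^{-1}\}$. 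The main obstacle is that this analysis is transparent only at a generic $t_0$, whereas minimality and the covering property must be controlled at \emph{every} specialization $t_0$, where $\Gal(f_i(X)-t_0/K)$ may be a proper subgroup of $S_{d_i}$ or $A_{d_i}$; the crux is therefore a uniform argument showing that, for $n=5$ and $n\ge 7$, no special fiber can create a blocking link that is absent generically, so that the redundancy established at the generic point persists for all $t_0$.
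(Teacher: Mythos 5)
Your group-theoretic core coincides with the paper's: derangements exist in both cosets of $A_n$ in $S_n$ for $n\ge 5$, for $n\ne 6$ every automorphism of $A_n$ or $S_n$ preserves cycle type (equivalently, all subgroups of index $n$ are conjugate), and the outer automorphism of $S_6$ is exactly why degree $6$ survives. Nevertheless there are two genuine gaps. First, your framework of analyzing every specialization $t_0$ creates a difficulty that you yourself leave unresolved in the final paragraph ("the crux is therefore a uniform argument\dots"), so the proposal is incomplete by its own account. That difficulty is an artifact of the formulation: the paper's Lemma \ref{lem:transl} shows that $\{f_1,\dots,f_r\}$ locally represents $K$ if and only if every element of the \emph{generic} Galois group $G=\mathrm{Gal}(\prod_i(f_i(X)-t)/K(t))$ fixes a root, and minimality of subsets is likewise detected by their generic groups; once this equivalence is available, no special-fiber analysis is needed at all. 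But the nontrivial direction of that equivalence --- that the generic covering property makes \emph{every} $t_0\in K$, including the critical values of the $f_i$, a pseudo-value --- requires an inertia/decomposition-group argument (identifying $\mathrm{Gal}(\prod_i(f_i(X)-t_0)/K)$ with $D/I$ and extending Frobenius to a cyclic subgroup of $D$) that your Chebotarev remark does not supply.

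Second, the step that carries the real weight --- lifting a total derangement on the coordinates $j\ne i$ to a total derangement including coordinate $i$ --- is only sketched via pairwise Goursat links. With $r\ge 3$ factors, some of which may share a splitting field while others are entangled through their sign-type solvable quotients, one must prove that the set of possible $i$-th coordinates of such a lift contains a full coset of $A_{d_i}$ whenever $\Omega_i$ is distinct from the other splitting fields; this is precisely the content of the paper's Lemma \ref{nonSolvableMess} and Corollary \ref{cosetFixedpoint} (linear disjointness over the maximal subextension with small composition factors), and it is the technical heart of the proof. Moreover, when several functions do share the splitting field $\Omega_i$, your assertion that a diagonal link ``transports derangements faithfully'' silently uses that all the corresponding degree-$d_i$ actions are equivalent, i.e.\ that all subgroups of index $d_i$ are conjugate, which is \cite[Theorem 5.2.A]{DM} and is exactly the step that fails for $d_i=6$. (The appeal to Theorem \ref{thm:main}' is also unnecessary and not quite applicable, since that theorem assumes geometric indecomposability and gives an unspecified bound; the paper's proof of Theorem \ref{thm:sym} does not use it.) So your outline isolates the right phenomena, but the two steps that make the argument a proof are missing.
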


In Section \ref{sec:exist}, we will complement the above theorems by existence results showing that the types of functions not yet excluded in Theorems \ref{thm:main} and \ref{thm:sym} do indeed occur.
%\marginpar{Since $X^p$ and Chebyshev are now realized for all $p$, might make this into a separate theorem into the intro saying ``all of the following occur in a suitable minimal locally representing set..."}

Namely, we show the following partial converse to Theorem \ref{thm:main}.

\begin{theorem}
\label{thm:converse}
Let $f\in \mathbb{Q}(X)$ be a geometrically indecomposable rational function of degree $>163$ such that the splitting field of $f(X)-t$ is of genus $\le 1$. Then 
there exists $\tilde{f}\in \mathbb{Q}(X)$, linearly related to $f$ over $\mathbb{C}$, such that $\tilde{f}$
is part of a minimal set of rational functions locally representing $\mathbb{Q}$.
%
%Should assume large degree here? Or actually doable universally??
%
%Need to check A_n, S_n for n=4,5; and the ell. curve isogeny ones!
%(2,3,5) A_5, S_5 in degrees 5,6,10. (Deg.s (6,10) together give minimally intersective;
%(5,6) would give intersective together with X^5; also all rational functions over Q!!)--> Turn this into example in ``small deg." section!
% (x^2+10x+5)^3/x,    x^3(x^2+5x+40),  (5x)^3(8x^2+25x+20)^3/(x^2+5x+5)^5;;
%(2,3,4) S_4, [safe]
%(2,3,3) A_4,S_4 [safe with deg.4,3]
%
%
% Probably only "not safe" case:
%[2,2,2,2] for (geom.) D_p with p=3,5,7, ..., 163 (inf. many non-equiv??, or only for very small p) Possible to deal with this without knowing A?
\end{theorem}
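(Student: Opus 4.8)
The plan is to combine the explicit classification of the functions admissible in the theorem with a direct construction of companion functions whose local behaviour is governed by Kummer theory.

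First I would reduce to a normal form. Since $f$ is geometrically indecomposable with splitting field of genus $\le 1$, its arithmetic monodromy group $\Mon:=\Gal(f(X)-t/\qq(t))$ is either solvable or embeds into $S_5$ (cf.\ Sections \ref{sec:gal_genus0} and \ref{sec:gal_genus1}); as $\deg f=[\Mon:\Mon_1]\le |S_5|=120<163$ would follow in the latter case, the hypothesis $\deg f>163$ forces $\Mon$ to be solvable. The corresponding functions are classified: up to linear relatedness over $\cc$, $f$ is a power map $X^n$, a Chebyshev polynomial $T_n$ (dihedral monodromy), or a Latt\`es-type map attached to an isogeny or endomorphism of an elliptic curve (the genus $1$ case). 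In each case I would choose the $\cc$-linear representative $\tilde f\in\qq(X)$ so that the branch locus and the ambient group structure ($\mathbb{G}_m$, a one-dimensional torus, resp.\ the elliptic curve $E$) are defined over $\qq$. This rationalisation is exactly what the freedom of linear relatedness over $\cc$ buys, and it turns the fibre $\tilde f^{-1}(t_0)$ into a torsor visibly governed by a power-residue (resp.\ point-divisibility) condition.

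Next I would set up the covering criterion. For $t_0\in\qq$ avoiding the finitely many branch values and for almost all primes $p$, a function $g$ represents $t_0$ over $\qq_p$ if and only if the Frobenius at $p$, acting on $g^{-1}(t_0)$ through $\Gal(g(X)-t_0/\qq)$, has a fixed point. Using the virtually abelian monodromy from the previous step, this fixed-point condition becomes a single congruence of power-residue type, of the shape $A_g(t_0)\in(\qq_p^{\ast})^n$ in the cyclic and dihedral cases and ``$P_{t_0}\in mE(\qq_p)$'' in the Latt\`es case, where $A_g\in\qq(t_0)$ resp.\ $P_{t_0}\in E(\qq)$ is attached to $g$. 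Consequently $\{\tilde f=g_1,\dots,g_r\}$ locally represents $\qq$ precisely when, for every $t_0$, each element of $\Gal\big(\prod_i(g_i(X)-t_0)/\qq\big)$ restricts to a fixed-point element in at least one factor; equivalently, the associated Kummer generators $A_{g_i}(t_0)^{1/n}$ are never simultaneously moved. Crucially, the relevant group is taken over $\qq$, so the cyclotomic part of the monodromy is available and enlarges the set of fixed-point elements far beyond the purely geometric count; this is the same mechanism underlying intersective polynomials, and it is what makes coverage possible (recall that by Corollary \ref{cor:transl3}b) a single non-linear function never suffices).

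I would then construct the companions explicitly. Starting from $\tilde f$, whose generator is $A_{\tilde f}(t_0)=t_0$ after normalisation, the divisor power maps $X^d$ with $d\mid n$, $d>1$, already reduce the uncovered Frobenius to those inducing a generator of the cyclic Kummer part (the ``unit'' characters on $\mu_n$); to close the covering I would adjoin translation-type companions generalising the classical triple $X^2,\ X^2+1,\ 1/(1-X^2)$, chosen so that their generators are tied to $t_0$ by multiplicative relations $\prod_i A_{g_i}(t_0)^{e_i}=(\ast)^n$ in $\qq(t_0)$. Such relations impose linear constraints over $\zz/n$ on the Frobenius characters $\chi_\sigma$, and, combined with the cyclotomic action, a finite covering system of these companions forces $\chi_\sigma$ to vanish on some generator for every $\sigma$. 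The dihedral and Latt\`es cases run in the same spirit, replacing the Kummer theory of $\mathbb{G}_m$ by that of the norm-one torus, resp.\ by the descent sequence of $E$. Minimality I would verify by Chebotarev: for each $i$ one exhibits a $t_0$ and a positive-density set of primes whose Frobenius fails every condition except the $i$-th, so that deleting $g_i$ leaves $t_0$ unrepresented for infinitely many $p$.

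The hard part will be the uniformity in $t_0$: the covering must hold for \emph{every} $t_0\in\qq$ with only finitely many exceptional primes, which requires controlling $\Gal\big(\prod_i(g_i(X)-t_0)/\qq\big)$ — in particular the entanglement between the cyclotomic part and the several Kummer parts, together with the degeneration of the fibres at special $t_0$ — uniformly as $t_0$ varies. In the Latt\`es case one must in addition control the images of the mod-$m$ and adelic Galois representations of $E$ and the variation of $E(\qq_p)/mE(\qq_p)$; it is here that the class-number-one discriminants enter, the largest in absolute value being $163$, which is why the small-degree and CM-exceptional cases are ruled out by $\deg f>163$. Securing this uniformity, together with the descent guaranteeing $\tilde f\in\qq(X)$, is the main obstacle; the group-theoretic covering itself is comparatively routine once the Kummer dictionary is in place.
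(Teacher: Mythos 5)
Your coarse skeleton agrees with the paper's (classify $f$ up to $\mathbb{C}$-linear relatedness into $X^p$, $T_p$, and the degree-$p^2$ genus-one ``Latt\`es'' case, then build companion functions out of small --- essentially quadratic --- subfields of the splitting fields), but the proposal has a genuine gap that you yourself flag as ``the main obstacle'': the uniformity in $t_0$. This obstacle is self-inflicted, because you work specialization by specialization, with Kummer theory and Frobenius conditions at each $t_0\in\mathbb{Q}$, and then need to control the entanglement of cyclotomic and Kummer parts (and, in the elliptic case, adelic Galois images and $E(\mathbb{Q}_p)/mE(\mathbb{Q}_p)$) as $t_0$ varies. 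The missing idea is the paper's Lemma \ref{lem:transl}: $\{f_1,\dots,f_r\}$ locally represents $\mathbb{Q}$ \emph{if and only if} every element of the single group $\mathrm{Gal}\bigl(\prod_i(f_i(X)-t)/\mathbb{Q}(t)\bigr)$ fixes a root, i.e.\ the point stabilizers form a normal covering of the generic monodromy group over the function field $\mathbb{Q}(t)$. (The implication from the generic statement to all specializations $t_0$, including critical values, is handled there once and for all via inertia/decomposition groups and Chebotarev.) With this reduction, no statement uniform in $t_0$ ever needs to be proved; all constructions --- Lemma \ref{lem:cheb} giving the minimal set $\{X^p,\,T_p,\,f_3\}$ with $f_3$ quadratic, Lemmas \ref{lem:twist} and \ref{lem:many} for R\'edei-type twists, and Theorem \ref{thm:genus1} for the genus-one case --- are pure group/function-field computations. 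As written, your plan is not a proof: the step you defer is exactly the hard content, and the route you propose for it (controlling adelic representations of varying elliptic curves, etc.) is far heavier than what is actually needed.

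A second, concrete error concerns the genus-one case and the role of $163$: you have it backwards. The hypothesis $\deg f>163$ does not ``rule out the CM-exceptional cases''; by Mazur's theorem (via Remark \ref{rem:gal_genus1}a)) it rules out the degree-$p$ \emph{isogeny} case over $\mathbb{Q}$, and what survives in degree $>163$ is precisely the degree-$p^2$ CM case (multiplication-by-$p$ on a CM curve, Remark \ref{rem:gal_genus1}b)). This case must be, and is, handled: the paper uses that $\mathrm{Mon}(f)=A\Gamma L_1(\mathbb{F}_{p^2})$ contains the Frobenius group $AGL_1(\mathbb{F}_{p^2})$, so the only fixed-point-free elements lie either in $C_p\times C_p$ or outside $AGL_1(\mathbb{F}_{p^2})$, and both kinds are caught by combining the biquadratic constant subextension $\mathbb{Q}(\sqrt{\alpha},\sqrt{\beta})$ of the splitting field with two R\'edei functions, a quadratic function and a Chebyshev polynomial $T_q$ (Theorem \ref{thm:genus1}); moreover there one can take $\tilde f=f$ itself, the $\mathbb{C}$-linear adjustment being needed only in the genus-zero cases. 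Two smaller points: since $f$ is geometrically indecomposable, $n=p$ is prime, so your ``divisor power maps $X^d$, $d\mid n$'' contribute nothing; and your companion system for $X^p$ is never exhibited --- in the paper the companion of $X^p$ is $T_p$ itself plus one quadratic function, a pairing your sketch does not predict.
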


Here, two rational functions $f_1, f_2\in K(X)$ are called {\it linearly related} to each other (over $K$) if there exist fractional linear transformations $\lambda, \mu\in K(X)$ (i.e., rational functions of degree $1$) such that $f_2 = \lambda\circ f_1\circ \mu$.

\section{Preliminaries}

 \subsection{Monodromy, inertia and decomposition groups of rational functions} 
\label{sec:ratfcts}
We collect some facts on rational functions and their monodromy, which will be useful both for the proofs in Section \ref{sec:mainproofs} and for the existence results in Section \ref{sec:exist}.

Let $K$ be a field of characteristic $0$ and $f(X)=\frac{g(X)}{h(X)}\in K(X)$ a rational function where $g$ and $h$ are coprime polynomials over $K$. The {\it degree} of $f$ is the maximum of $\deg(g)$ and $\deg(h)$. The roots of $g(X)-th(X)$ are referred to as the roots of $f(X)-t$. The (arithmetic) {\it monodromy group} of $f$ is the group $\Mon(f) := \Mon_K(f):= \mathrm{Gal}(g(X)-th(X)/K(t))$. The {\it geometric monodromy group} $\Mon_{\overline{K}}(f)$ of $f$ is the Galois group of the same polynomial over $\overline{K}(t)$; it is a normal subgroup of $\Mon_K(f)$, since $\overline{K}\cap \mathrm{Split}(f(X)-t/K(t))$ is a normal extension of $K$. Moreover, $\Mon_{\overline{K}}(f)$ is anti-isomorphic to the group of deck transformations of the Galois closure of the cover $\mathbb{P}^1_{\overline{K}}\to \mathbb{P}^1_{\overline{K}}$ given by $x\mapsto f(x)$. This cover has finitely many {\it branch points} $t_1,\dots, t_r\in \mathbb{P}^1_{\overline{K}}$, which are exactly the critical values of the function $f$.
Associated to each branch point $t_i$ is a conjugacy class $C_i$ of the geometric monodromy group, and the cycle lengths of $\sigma_i\in C_i$ (acting on the roots of $f(X)-t$) are the ramification indices of places extending $t\mapsto t_i$ in a root field of $f-t$, which in term are simply the multiplicities of points in $f^{-1}(t_0)\subset \mathbb{P}^1_{\overline{K}}$. See \cite[Lemma 3.1]{Mue_hit}. The tuple $(C_1,\dots, C_r)$ is called the {\it ramification type} of the rational function $f$; moreover, one can choose a tuple $(\sigma_1,\dots, \sigma_r)$ with $\sigma_i\in C_i$ ($i=1,\dots, r$), such that $\sigma_1\cdots\sigma_r=1$ and $\langle\sigma_1,\dots, \sigma_r\rangle= \Mon_{\overline{K}}(f)$. The tuple $(\sigma_1,\dots, \sigma_r)$ is called a {\it branch cycle description} of $f$; the cyclic group $\langle\sigma_i\rangle$ is an {\it inertia group} at the branch point $t\mapsto t_i$ in the splitting field of $f(X)-t$.
 The tuple $(\sigma_1,\dots, \sigma_r)$ is moreover a ``genus-zero tuple", which via the Riemann-Hurwitz formula can be expressed in the form of an explicit condition on the total number of cycles of $\sigma_1,\dots, \sigma_r$. To state this precisely, recall the notion of the {\it index} of a permutation $\sigma\in S_n$; it is defined as $\mathrm{ind}(\sigma):= n - \#\{\text{ orbits of } \langle\sigma\rangle\}$.

\begin{proposition}[Riemann-Hurwitz formula (for the case of rational functions)]
\label{prop:rh}
Let $f\in K(X)$ be a rational function of degree $n$ with ramification type $(C_1,\dots, C_r)$. Then for $\sigma_i\in C_i$, $i=1,\dots, r$, one has
$$2n-2 = \sum_{i=1}^r \mathrm{ind}(\sigma_i).$$
\end{proposition}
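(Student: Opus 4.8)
The plan is to derive this as a direct specialization of the general Riemann-Hurwitz genus formula to the cover $\phi\colon \mathbb{P}^1_{\overline{K}}\to \mathbb{P}^1_{\overline{K}}$, $x\mapsto f(x)$, and then to translate the resulting sum of local ramification contributions into the group-theoretic index of the inertia generators. Since the whole statement is geometric, I would first pass to $\overline{K}$ and work with the geometric monodromy group $\Mon_{\overline{K}}(f)$ together with a branch cycle description $(\sigma_1,\dots,\sigma_r)$, $\sigma_i\in C_i$, as set up above.

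First step: invoke Riemann-Hurwitz. For a finite separable morphism of smooth projective curves of degree $n$ in characteristic $0$ (so all ramification is tame), the formula reads
$$2g_X-2 = n(2g_Y-2) + \sum_{P}(e_P-1),$$
where $P$ ranges over the points of the source curve and $e_P$ is the ramification index. Here both source and target are $\mathbb{P}^1$, so $g_X=g_Y=0$ and the formula collapses to
$$2n-2 = \sum_{P}(e_P-1).$$

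Second step: localize the sum at the branch points and read off the indices. Ramification occurs only over the critical values $t_1,\dots,t_r$, so every point $P$ with $e_P>1$ lies over some $t_i$ and all remaining points contribute $0$. By the correspondence recalled above (see \cite[Lemma 3.1]{Mue_hit}), the ramification indices of the points in the fiber $f^{-1}(t_i)$ are exactly the cycle lengths $\ell_{i,1},\dots,\ell_{i,k_i}$ of $\sigma_i$ acting on the $n$ roots of $f(X)-t$. Since the cycle lengths of a permutation of $n$ points sum to $n$, the fiber over $t_i$ contributes
$$\sum_{j=1}^{k_i}(\ell_{i,j}-1) = \Big(\sum_{j=1}^{k_i}\ell_{i,j}\Big) - k_i = n - k_i,$$
where $k_i$ is the number of cycles of $\sigma_i$, i.e. the number of orbits of $\langle\sigma_i\rangle$. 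By the definition of the index this is precisely $n-k_i=\mathrm{ind}(\sigma_i)$. Summing over $i$ and substituting into the collapsed formula gives $2n-2=\sum_{i=1}^r\mathrm{ind}(\sigma_i)$, as claimed.

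The only content-bearing input is the identification of ramification indices with cycle lengths of inertia generators, and that is already available; everything else is bookkeeping, so there is no genuine obstacle here. The one point to stay careful about is to work consistently over $\overline{K}$ with the geometric monodromy group, so that the \emph{full} ramification — including any branching at $t=\infty$ and the contribution of the poles of $f$ — is accounted for, rather than only the $K$-rational branch data. Since the ramification type $(C_1,\dots,C_r)$ is by definition the geometric one, this consistency is already built into the hypotheses of the proposition.
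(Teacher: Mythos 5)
Your proof is correct and matches the paper's (implicit) approach: the paper states Proposition \ref{prop:rh} without proof, as a classical consequence of the Riemann--Hurwitz formula, and your argument --- specializing the genus formula to a degree-$n$ morphism $\mathbb{P}^1_{\overline{K}}\to\mathbb{P}^1_{\overline{K}}$ and converting each fiber's contribution $\sum_j(\ell_{i,j}-1)$ into $n$ minus the number of cycles of $\sigma_i$ --- is exactly the standard bookkeeping that this citation relies on. Your one content-bearing input, the identification of ramification indices over $t_i$ with the cycle lengths of $\sigma_i$ via \cite[Lemma 3.1]{Mue_hit}, is the same fact the paper records in Section \ref{sec:ratfcts} immediately before stating the proposition, so there is nothing to add.
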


Conversely, one has the following, as a special case of Riemann's existence theorem.
\begin{proposition}
\label{prop:ret}
Let $\sigma_1,\dots, \sigma_r\in S_n$ such that $\sigma_1\cdots\sigma_r=1$, the subgroup $G:=\langle \sigma_1,\dots, \sigma_r\rangle\le S_n$ is transitive and $2n-2 = \sum_{i=1}^r \mathrm{ind}(\sigma_i)$. Then there exists a rational function $f$, defined over some number field $K$, such that the geometric monodromy group of $f$ equals $G$ and a branch cycle description is given by $(\sigma_1,\dots, \sigma_r)$.
\end{proposition}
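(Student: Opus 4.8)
The plan is to realize the prescribed branch data by a topological branched cover, to identify the covering surface as $\mathbb{P}^1$ by means of a genus computation, and finally to descend the resulting rational function to a number field.

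First I would fix distinct points $t_1,\dots,t_r\in\mathbb{P}^1(\mathbb{C})$ and appeal to the topological form of Riemann's existence theorem. (We may discard those indices $i$ with $\sigma_i=1$: they contribute nothing to the index sum and correspond to unramified points.) The fundamental group $\pi_1(\mathbb{P}^1(\mathbb{C})\setminus\{t_1,\dots,t_r\})$ is generated by loops $\gamma_1,\dots,\gamma_r$ encircling the punctures, subject to the single relation $\gamma_1\cdots\gamma_r=1$. Hence the assignment $\gamma_i\mapsto\sigma_i$ is well defined and yields a transitive permutation representation of degree $n$, transitivity being exactly the hypothesis on $G$. This corresponds to a connected degree-$n$ covering of the punctured sphere, and filling in the punctures produces a compact connected Riemann surface $Y$ together with a holomorphic branched covering $\pi\colon Y\to\mathbb{P}^1(\mathbb{C})$ ramified only over the $t_i$, whose local monodromy at $t_i$ is the cycle type of $\sigma_i$.

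Second I would compute the genus of $Y$. Over $t_i$ the ramification indices are precisely the cycle lengths of $\sigma_i$, so the local contribution to Riemann--Hurwitz is $\sum_{P\mid t_i}(e_P-1)=\mathrm{ind}(\sigma_i)$. Since the base has genus $0$, the Riemann--Hurwitz formula reads
$$2g_Y-2 = -2n + \sum_{i=1}^r \mathrm{ind}(\sigma_i).$$
Substituting the hypothesis $\sum_i\mathrm{ind}(\sigma_i)=2n-2$ gives $2g_Y-2=-2$, i.e.\ $g_Y=0$. Therefore $Y$ is isomorphic to $\mathbb{P}^1(\mathbb{C})$, and under such an isomorphism $\pi$ becomes a non-constant holomorphic self-map of $\mathbb{P}^1(\mathbb{C})$, which is exactly a rational function $f\in\mathbb{C}(X)$ of degree $n$. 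By construction its geometric monodromy group is $G$ and $(\sigma_1,\dots,\sigma_r)$ is a branch cycle description of $f$.

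Finally I would descend $f$ to a number field, and this is the step I expect to be the main obstacle, since the construction above only yields $f$ over $\mathbb{C}$. The cleanest route is to invoke the algebraicity of Hurwitz spaces: for fixed $G$ and fixed ramification type, the covers of $\mathbb{P}^1$ carrying this datum are parametrized by a Hurwitz space that is a scheme of finite type over $\mathbb{Q}$, and fixing the branch points (which may be taken in $\mathbb{P}^1(\overline{\mathbb{Q}})$) cuts out a finite set of points. Consequently our cover corresponds to a point defined over $\overline{\mathbb{Q}}$, hence over some number field $K$. Equivalently, one may invoke Grothendieck's comparison of the topological and \'etale fundamental groups of $\mathbb{P}^1_{\overline{\mathbb{Q}}}\setminus\{t_1,\dots,t_r\}$ to see that the topological cover is already defined over $\overline{\mathbb{Q}}$. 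In either formulation, the finiteness of the relevant moduli forces the field of definition to be a number field, which completes the argument.
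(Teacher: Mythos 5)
The paper gives no proof of this proposition at all, stating it simply as ``a special case of Riemann's existence theorem''; your write-up correctly unpacks exactly that standard argument (topological realization of the branch data, Riemann--Hurwitz to conclude the covering curve has genus zero, then descent to $\overline{\mathbb{Q}}$ via Grothendieck's comparison theorem or finiteness of Hurwitz spaces, hence to a number field), so it is correct and matches the paper's intended route. The only detail worth adding is that after descending the cover to a number field $K$, the genus-zero source curve need not be $K$-isomorphic to $\mathbb{P}^1$ until one enlarges $K$ so that it acquires a $K$-rational point --- harmless here, since the statement only asks for \emph{some} number field.
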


Determining the difference between arithmetic and geometric monodromy group furthermore amounts to quantifying the extension of constants in the splitting field of $f(X)-t$. For this, the pair $(I,D)$ of inertia and {\it decomposition group} at a branch point often provides useful information. The following is, e.g., a special case of \cite[Prop.\ 7.1.2]{Serre}.
%It is a special case of the so-called ``branch cycle lemma".
\begin{proposition}
%Actually required part: if I is inertia group at some branch point $\lambda$ of ramification index $e$ and A is arithmetic monodromy group, then N_G(I) surjects onto Gal(K(\lambda)(\zeta_e)/K(\lambda)). In particular, if....
%
% Maybe coverable by Lemma 2.1 in TransAMS paper?
% Or 2.4/2.5 in GMS?
%
\label{prop:bcl}
Let $f\in K(X)$ be a rational function, and let $\lambda\in K\cup\{\infty\}$ be a $K$-rational branch point of $x\mapsto f(x)$, with inertia group $I$ of order $e$. Let $A$ be the (arithmetic) monodromy group of $f$. Then there exists a subgroup $D\le A$ with $I\trianglelefteq D$ such that $D/I$ surjects onto $\mathrm{Gal}(K(\zeta_e)/K)$.
\end{proposition}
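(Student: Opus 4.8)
The plan is to work entirely in the local-global dictionary for the Galois extension $L := \mathrm{Split}(f(X)-t/K(t))$ of $K(t)$, whose Galois group is the arithmetic monodromy group $A$. The branch point $\lambda$ corresponds to a place $v$ of $K(t)$: for $\lambda\in K$ this is the place attached to the linear prime $t-\lambda$, and for $\lambda=\infty$ the place attached to $1/t$; after the substitution $t\mapsto 1/t$ the two cases are identical, so I would argue for $\lambda\in K$ and note the reduction for the point at infinity. The crucial consequence of $\lambda$ being $K$-rational is that the residue field of $v$ is $K$ itself.

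First I would fix a place $w$ of $L$ lying over $v$ and set $D:=D_w$ and $I:=I_w$, its decomposition and inertia groups inside $A$. By the standard theory of ramification in Galois extensions one has $I\trianglelefteq D$, together with the exact sequence that identifies $D/I$ with the Galois group of the residue extension, i.e. $D/I\cong\mathrm{Gal}(\ell_w/K)$, where $\ell_w$ is the residue field of $w$. Here one uses that $\ell_w/K$ is itself Galois: the residue extension at a place of a Galois extension is normal, and it is separable since $\mathrm{char}(K)=0$. By construction $I$ is exactly an inertia group at $\lambda$, of order equal to the ramification index $e$.

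The heart of the argument is to show that $\ell_w$ contains a primitive $e$-th root of unity, so that $K(\zeta_e)\subseteq\ell_w$. This is where tameness, forced by $\mathrm{char}(K)=0$, enters. In a tamely ramified extension the inertia group $I$ is cyclic of order $e$, and the tame character $\theta\colon I\to\mu_e(\ell_w)$, $\sigma\mapsto\sigma(\Pi)/\Pi\bmod w$ for a uniformizer $\Pi$ at $w$, is a well-defined injective homomorphism into the group of $e$-th roots of unity of the residue field. Since $\abs{I}=e$, injectivity forces $\mu_e(\ell_w)$ to have order $e$; that is, $\ell_w$ contains a full set of $e$-th roots of unity, whence $K(\zeta_e)\subseteq\ell_w$. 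The conclusion is then immediate: restriction of automorphisms gives a surjection $\mathrm{Gal}(\ell_w/K)\twoheadrightarrow\mathrm{Gal}(K(\zeta_e)/K)$, because $K(\zeta_e)/K$ is a Galois subextension of $\ell_w/K$, and composing with $D/I\cong\mathrm{Gal}(\ell_w/K)$ yields the desired surjection, with $D\le A$ and $I\trianglelefteq D$ by construction.

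I expect the only real subtlety to be the clean justification of the tame-character step, namely that $\theta$ is independent of the chosen uniformizer and is a genuine homomorphism whose image is exactly $\mu_e(\ell_w)$, together with correctly transporting the statement between the function-field place $v$ and its completion (or henselization), where the local ramification theory literally applies. Both points are classical, so I view the argument as an assembly of standard facts in the right order rather than the resolution of a genuine obstacle; indeed the whole statement is, as noted, a special case of \cite[Prop.\ 7.1.2]{Serre}.
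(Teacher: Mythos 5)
Your proof is correct, and it is essentially the same argument as the one the paper relies on: the paper gives no proof of its own but cites \cite[Prop.\ 7.1.2]{Serre}, whose proof is exactly this standard reasoning (decomposition/inertia at the place $t=\lambda$ with residue field $K$, tameness in characteristic $0$, injectivity of the tame character $I\hookrightarrow\mu_e(\ell_w)$ forcing $\zeta_e\in\ell_w$, then restriction $D/I\cong\mathrm{Gal}(\ell_w/K)\twoheadrightarrow\mathrm{Gal}(K(\zeta_e)/K)$). Nothing is missing; your unwinding of the citation is the intended proof.
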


%%%
%%% Also include result on orbits of (inertia and) decomposition groups?

\iffalse Two rational functions $f_1, f_2\in K(X)$ are called {\it linearly related} to each other (over $K$) if there exist fractional linear transformations $\lambda, \mu\in K(X)$ (i.e., rational functions of degree $1$) such that $f_2 = \lambda\circ f_1\circ \mu$. This induces an equivalence relation on the set of rational functions, clearly preserving the monodromy group and the ramification type. 
A rational function $f\in K(X)$ is called {\it indecomposable} (resp., {\it geometrically indecomposable}) if it cannot be written as a composition $f = f_1\circ f_2$ of two functions $f_1,f_2 \in K(X)$ (resp. $\in \overline{K}(X)$) of degree $>1$. In group-theoretical terms, indecomposability (resp., geometric indecomposability) corresponds to primitivity of the monodromy group (resp., of the geometric monodromy group).
\marginpar{Geometric indecomposability already defined in Section 1. Merge these two batches? BK: I would only merge the first sentence here with the part in Section 1 where indecomposability is defined. But I would keep the rest, since monodromy has not been introduced yet in section 1. The rest of this paragraph would maybe be better suited for the introduction of section 2.2?}
\fi

Finally, recall that many notions related to composition of rational functions (cf.\ Section \ref{sec:intro}) translate naturally into permutation-group theoretical notions on the side of the monodromy group. In particular, a rational function $f\in K(X)$ is indecomposable (resp.,  geometrically indecomposable), if and only if its monodromy group (resp., its geometric monodromy group) is a primitive permutation group. Similarly, two functions $f_1,f_2\in K(X)$ which are linearly related over $K$ (i.e., $f_2=\lambda\circ f_1\circ \mu$ for functions $\lambda,\mu\in K(X)$ of degree $1$) have the same monodromy group, as well as the same ramification type.

\subsection{Classification results for monodromy groups of rational functions}

Being the monodromy group of a rational function is in fact quite a restrictive condition on a group $G$. We review various results related to the classification of such monodromy groups.
%While being the Galois group of an intersective polynomial as in the previous section is not a restrictive condition, being the monodromy group of a rational function is quite restrictive, and it is the combination of these two that is relevant for us. 

\subsubsection{Rational functions with Galois closure of genus $0$}
\label{sec:gal_genus0}
The classification of genus-$0$ {\it Galois} extensions of function fields in characteristic $0$ is classical. It follows essentially from the Riemann-Hurwitz formula and was known to Klein. See, e.g., \cite[Theorem I.6.2]{MM}.
\begin{proposition}
\label{prop:gal_genus0}
Let $K$ be a field of characteristic $0$, and $f\in K(X)$ be a rational function of degree $>1$ such that the splitting field of $f(X)-t$ over $K(t)$ is of genus $0$. Let $G$ be the geometric monodromy group of $f$, and $e_1,\dots, e_r$ be the ramification indices at branch points in $\mathrm{Split}(f(X)-t)/K(t)$. Then one of the following holds:
\begin{itemize}
\item[a)] $G\cong C_n$, $r=2$ and $(e_1,e_2)=(n,n)$ for some $n\ge 2$. In this case, $f$ is linearly related over $\mathbb{C}$ to $X^n$.
\item[b)] $G\cong D_n$, $r=3$ and $(e_1,e_2,e_3)=(n,2,2)$ for some $n\ge 2$ (with $D_2:=V_4$). In this case, $f$ is linearly related over $\mathbb{C}$ to the degree-$n$ Chebyshev polynomial $T_n$ or the degree-$2n$ rational function $X^n+\frac{1}{X^n}$.
\item[c)] $G\cong A_4$, $r=3$ and $(e_1,e_2,e_3)=(2,3,3)$.
\item[d)] $G\cong S_4$, $r=3$ and $(e_1,e_2,e_3)=(2,3,4)$.
\item[e)] $G\cong A_5$, $r=3$ and $(e_1,e_2,e_3)=(2,3,5)$.
\end{itemize}
\end{proposition}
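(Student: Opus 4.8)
The plan is to reduce the statement to the classical Klein classification via a Riemann--Hurwitz computation on the Galois closure, and I would carry this out in three stages (group order, ramification tuple, abstract group), with a final identification step for cases a) and b). First I would use the genus hypothesis: since $\mathrm{Split}(f(X)-t/\overline{K}(t))$ has genus $0$, it is the function field of a curve isomorphic to $\mathbb{P}^1$, so the Galois closure of the cover $x\mapsto f(x)$ is a Galois cover $\phi\colon \mathbb{P}^1\to\mathbb{P}^1$ with group $G=\Mon_{\overline{K}}(f)$, acting on the source by fractional linear transformations. Because $\phi$ is Galois and we are in characteristic $0$, the ramification over each branch point $t_i$ is tame and uniform: inertia is cyclic of order $e_i$, every point above $t_i$ has ramification index $e_i$, and there are exactly $\abs{G}/e_i$ of them. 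Feeding this into the Riemann--Hurwitz formula (source and target both of genus $0$) gives $-2=\abs{G}(-2)+\sum_{i=1}^{r}(\abs{G}/e_i)(e_i-1)$, i.e., after dividing by $\abs{G}$,
\[ \sum_{i=1}^{r}\Bigl(1-\tfrac{1}{e_i}\Bigr)=2-\tfrac{2}{\abs{G}}. \]

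Next I would enumerate the solutions of this constraint. Each $e_i\ge 2$ contributes a term in $[\tfrac12,1)$, while the right-hand side lies in $[1,2)$ for $\abs{G}\ge 2$; hence $r\in\{2,3\}$. For $r=2$ the equation reads $\tfrac{1}{e_1}+\tfrac{1}{e_2}=\tfrac{2}{\abs{G}}$, and since $e_i\le\abs{G}$ this forces $e_1=e_2=\abs{G}=:n$, so $\phi$ is totally ramified over exactly two points. For $r=3$ one needs $\tfrac{1}{e_1}+\tfrac{1}{e_2}+\tfrac{1}{e_3}>1$, and a short check (ordering $e_1\le e_2\le e_3$) leaves only $(2,2,m)$, $(2,3,3)$, $(2,3,4)$, $(2,3,5)$; the equation then pins down $\abs{G}$ to $2m$, $12$, $24$, $60$ respectively. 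This already produces the list of ramification tuples in a)--e).

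To identify the abstract group I would use the branch cycle description of the Galois cover: $G$ is generated by $\sigma_1,\dots,\sigma_r$ with $\sigma_1\cdots\sigma_r=1$ and $\mathrm{ord}(\sigma_i)=e_i$, so $G$ is a quotient of the triangle group $\Delta(e_1,\dots,e_r)=\langle x_1,\dots,x_r\mid x_i^{e_i}=x_1\cdots x_r=1\rangle$. For $r=2$ this immediately gives $G=\langle\sigma_1\rangle\cong C_n$; for $(2,2,m)$ two involutions whose product has order $m$ generate $D_m$. In the three sporadic cases the groups $\Delta(2,3,3)$, $\Delta(2,3,4)$, $\Delta(2,3,5)$ are the classical \emph{spherical} triangle groups, finite of orders $12,24,60$; since $G$ is a quotient of one of them of equal order, it must be the full group, namely $A_4$, $S_4$, $A_5$. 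I would simply quote these standard facts about spherical triangle groups.

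Finally, in cases a) and b) one must upgrade the group-theoretic classification to a statement about $f$ up to $\cc$-linear equivalence, and I expect this to be the main obstacle, as it is the only step that is not purely formal. The function $f$ corresponds to the intermediate field fixed by a point stabilizer $H\le G$, and the named models arise from the admissible faithful transitive actions. In case a), a degree-$n$ cover totally ramified over two points becomes $x\mapsto x^n$ after moving those points to $0$ and $\infty$, which is unique up to $\cc$-linear equivalence. In case b), the faithful transitive $D_n$-actions come from $H$ trivial (regular representation, degree $2n$, model $X^n+X^{-n}$) or $H$ an order-$2$ reflection subgroup (degree $n$, model $T_n$), any larger $H$ meeting the normal rotation subgroup $C_n$ being ruled out by faithfulness; comparing the explicit branch data of $T_n$ and $X^n+X^{-n}$ with the tuple $(n,2,2)$ then confirms the linear-equivalence claim. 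Here I would be careful that the $e_i$ in the statement are the ramification indices of the \emph{splitting} field, i.e., the orders of the $\sigma_i$ in $G$, rather than the ramification of $f$ itself, so the identification must pass through the known branch data of these explicit functions.
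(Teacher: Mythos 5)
The paper offers no proof of this proposition at all: it is quoted as classical (Klein), with a pointer to \cite[Theorem I.6.2]{MM}. Your reconstruction is precisely that classical route, and its first three stages are correct and complete: the Riemann--Hurwitz computation on the genus-$0$ Galois closure giving $\sum_i (1-1/e_i) = 2 - 2/|G|$, the enumeration forcing $r\in\{2,3\}$ and the spherical tuples $(n,n)$, $(2,2,m)$, $(2,3,3)$, $(2,3,4)$, $(2,3,5)$ with the corresponding orders of $G$, and the identification of $G$ as a quotient of the finite spherical triangle group of equal order (hence isomorphic to it, since a surjection of finite groups of the same order is an isomorphism). Your case a) identification is also complete: a degree-$n$ map totally ramified over two points becomes $cX^n$ after normalizing source and target, which is an elementary divisor argument. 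Your classification of the faithful transitive $D_n$-actions (stabilizer trivial or generated by a reflection, since every subgroup of the rotation subgroup is normal in $D_n$) is likewise correct.

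The one genuine gap is the last sentence of case b). As stated, ``comparing the explicit branch data of $T_n$ and $X^n+X^{-n}$ with the tuple $(n,2,2)$'' does not confirm the linear-equivalence claim: agreement of monodromy group and ramification data is in general \emph{not} sufficient for two covers to be related by fractional linear transformations --- that is exactly the content of (weak) rigidity, which can fail for other groups and tuples. You need a uniqueness input, and there are two standard ways to supply it. (i) Weak rigidity: the generating triples $(\sigma_1,\sigma_2,\sigma_3)$ of $D_n$ with orders $(n,2,2)$ and $\sigma_1\sigma_2\sigma_3=1$ number $n\varphi(n)$, and $\mathrm{Aut}(D_n)$, of order $n\varphi(n)$ for $n\ge 3$, acts freely --- hence transitively --- on them (the case $n=2$ is a direct check); by Riemann's existence theorem the $D_n$-Galois cover with three prescribed branch points is therefore unique up to a M\"obius transformation upstairs, and with it each subcover attached to a class of point stabilizers is unique up to linear relatedness. (ii) Klein's route, which your own setup already makes available: the deck group is a finite dihedral subgroup of $\mathrm{PGL}_2(\mathbb{C})$, all such subgroups of order $2n$ are conjugate in $\mathrm{PGL}_2(\mathbb{C})$, so the Galois cover is M\"obius-equivalent to the quotient by $\langle z\mapsto \zeta_n z,\ z\mapsto 1/z\rangle$, namely $z\mapsto z^n+z^{-n}$, and the fixed field of a reflection then yields $T_n$. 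With either patch inserted your proof is complete; note that cases c)--e) are unaffected, since the statement there asserts nothing about the function beyond its group and ramification tuple.
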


\begin{remark}
\label{rem:gal_genus0}
\begin{itemize}
\item[a)] Furthermore, the arithmetic monodromy group $\Mon_K(f)$ of a function $f$ as in Proposition \ref{prop:gal_genus0} is very restricted, since necessarily contained in the symmetric normalizer of the geometric monodromy group. E.g., in Cases a) and b), this symmetric normalizer equals the group $AGL_1(\mathbb{Z}/n\mathbb{Z}) = C_n\rtimes \mathrm{Aut}(C_n)$ of order $n\cdot \varphi(n)$.
\item[b)] The functions in Cases c)-e) can also be described more explicitly. They all correspond to subcovers of the tetrahedral, octahedral or icosahedral Galois covers $\mathbb{P}^1\to \mathbb{P}^1$, and have all been computed explicitly, see, e.g., \cite[Theorem 1.1]{Pak}.
\item[c)] The list of functions shrinks even further if one adds the requirement of indecomposability, or equivalently, of the monodromy group acting primitively. E.g., in Cases a) and b), the only remaining functions are those linearly related over $\mathbb{C}$ to $X^p$ (for a prime $p$) or to $T_p$ (for an odd prime $p$), since the functions $X^n+\frac{1}{X^n}$ correspond to the regular, hence imprimitive action of $D_n$.
\end{itemize}
\end{remark}

\subsubsection{Rational functions with Galois closure of genus $1$}
\label{sec:gal_genus1}

In a similar way, rational functions whose Galois closure is of genus $1$ have been known quite explicitly (i.e., via their group theoretical data) for a long time. They are associated to endomorphisms or isogenies of elliptic curves. We collect some key facts on these functions from \cite[Section 6]{GMS}.
\begin{proposition}
\label{prop:gal_genus1}
Let $K$ be a  field of  characteristic $0$, and $f\in K(X)$ be a rational function such that the splitting field of $f(X)-t$ over $K(t)$ is of genus $1$. Let $A$ and $G$ denote the arithmetic and geometric monodromy group of $f$. Then all of the following hold:
\begin{itemize}
\item[a)] The tuple of ramification indices at branch points in $\mathrm{Split}(f(X)-t)/K(t)$ is one of $(2,2,2,2)$, $(3,3,3)$, $(2,4,4)$ and $(2,3,6)$.
\item[b)] %Geometric monodromy group, relation to elliptic curves etc.
If furthermore $f$ is assumed indecomposable, then $\deg(f)=p$ or $\deg(f)=p^2$ for a prime $p$. In the first case, $G\cong C_p\rtimes C_d$ for $d=2,3,4$ or $6$ (according to the four possibilities in a)) and $G\trianglelefteq A\le AGL_1(p)$. In the second case, $G\cong (C_p\times C_p)\rtimes C_d$ with $d\in \{2,3,4,6\}$, and $G\trianglelefteq A\le AGL_2(p)$.
\end{itemize}
\end{proposition}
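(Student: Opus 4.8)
\emph{Setup.} The plan is to work geometrically over $\overline{K}$, where $G := \Mon_{\overline{K}}(f)$ is the deck transformation group of the Galois closure of $x\mapsto f(x)$. By hypothesis the Galois closure curve $\hat{C}$ has genus $1$, so over $\overline{K}$ it is (after choosing a base point) an elliptic curve $E$, and $G$ is a finite group of automorphisms of $E$ with $E/G\cong \mathbb{P}^1_t$. The crucial structural input I would invoke is that $\mathrm{Aut}(E)=E\rtimes \mathrm{Aut}(E,O)$, the translations being normal and $\mathrm{Aut}(E,O)$ cyclic of order $m\in\{2,4,6\}$ (in characteristic $0$). All the arithmetic, i.e.\ the passage between $G$ and $A$, I would postpone to the very end.

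\emph{Part a).} First I would apply the Riemann--Hurwitz formula to the Galois cover $E\to E/G=\mathbb{P}^1$ of degree $|G|$. Since the cover is Galois, over each branch point there lie $|G|/e_i$ points all of ramification index $e_i$, so with $g(E)=1$ and $g(\mathbb{P}^1)=0$ the formula collapses to $\sum_i (1-1/e_i)=2$. Solving this elementary Diophantine equation in integers $e_i\ge 2$ (each summand lies in $[1/2,1)$, forcing $3\le r\le 4$) yields exactly the four tuples $(2,2,2,2)$, $(3,3,3)$, $(2,4,4)$, $(2,3,6)$, which proves a). I would also record that each $e_i$ is the order of a point stabilizer $G_P\le G$, which is cyclic because, translations acting freely, $G_P$ injects into $\mathrm{Aut}(E,O)$; this is what links each tuple to a single value $d$ below.

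\emph{Part b).} Now assume $f$ indecomposable, so $G$ is primitive of degree $n=\deg(f)$. Set $T:=G\cap E$, the subgroup of translations in $G$; it is abelian and normal in $G$ since $E\trianglelefteq \mathrm{Aut}(E)$. By primitivity the normal subgroup $T$ is trivial or transitive. If $T=1$ then $G$ embeds in the cyclic group $\mathrm{Aut}(E,O)$, hence is cyclic; but a faithful primitive cyclic group is $C_p$ acting regularly, so the point stabilizer $G_1$ is trivial and the intermediate curve $E/G_1=E$ associated to $f$ would have genus $1$ rather than the required genus $0$ of $\mathbb{P}^1_x$, a contradiction. Hence $T$ is transitive, and being abelian it acts regularly: $|T|=n$, $G=T\rtimes G_1$ with $G_1\cong G/T\cong GE/E\le \mathrm{Aut}(E,O)$ cyclic of order $d$. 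Since $G\ne T$ (otherwise $G$ would act by fixed-point-free translations and $E/G$ would again have genus $1$), we get $d>1$; as $d\mid m\in\{2,4,6\}$, the possibilities are exactly $d\in\{2,3,4,6\}$, matching the four tuples of a).

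\emph{Finishing b), and the obstacle.} It remains to identify $T$ and to pass to $A$. Primitivity of the affine group $T\rtimes C_d$ is equivalent to $C_d$ acting on $T$ with no proper nontrivial invariant subgroup; since a non-$p$-group or a non-elementary-abelian $p$-group carries a proper nontrivial characteristic (hence $C_d$-invariant) subgroup, $T$ must be elementary abelian, and as a finite subgroup of $E$ it is then $C_p$ or $E[p]=C_p\times C_p$. This gives $n=p$ with $G=C_p\rtimes C_d\le AGL_1(p)$, or $n=p^2$ with $G=(C_p\times C_p)\rtimes C_d\le AGL_2(p)$, where $C_d$ now acts irreducibly inside $GL_2(p)$. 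Finally, for the arithmetic monodromy $A\trianglerighteq G$: the regular normal abelian subgroup $T$ is the socle of $G$, hence characteristic in $G$ and therefore normal in $A$, so $A$ lies in $N_{S_n}(T)=AGL_1(p)$, respectively $AGL_2(p)$, giving $G\trianglelefteq A\le AGL_1(p)$ (resp.\ $AGL_2(p)$). I expect the main obstacle to be purely at the setup stage, namely rigorously identifying the genus-one Galois closure with an elliptic curve and pinning down $\mathrm{Aut}(E)=E\rtimes \mathrm{Aut}(E,O)$ with $|\mathrm{Aut}(E,O)|\in\{2,4,6\}$; once $G$ is realized as a subgroup of $\mathrm{Aut}(E)$, both parts reduce to the Riemann--Hurwitz count and a short primitivity argument.
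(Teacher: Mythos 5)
Your part a) is correct and complete: the Riemann--Hurwitz count $\sum_i(1-1/e_i)=2$ for the Galois cover $E\to E/G$ of a genus-$1$ curve is exactly the right argument, and your overall geometric strategy (identify the Galois closure with an elliptic curve $E$, use $\mathrm{Aut}(E)=E\rtimes\mathrm{Aut}(E,O)$ with $|\mathrm{Aut}(E,O)|\in\{2,4,6\}$) is the standard route; note the paper itself gives no proof at all but cites \cite[Section 6]{GMS}. However, part b) has a genuine gap at its very first step: you claim ``$f$ indecomposable, so $G$ is primitive.'' By the paper's own conventions (Section \ref{sec:ratfcts}), indecomposability of $f$ over $K$ is equivalent to primitivity of the \emph{arithmetic} monodromy group $A$, while primitivity of the geometric group $G$ corresponds to the strictly stronger hypothesis of \emph{geometric} indecomposability. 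The distinction is material here, not cosmetic: the proposition's conclusion explicitly allows $\deg(f)=p^2$ with $d=2$ (ramification tuple $(2,2,2,2)$), in which case $G\cong(C_p\times C_p)\rtimes C_2$ with $C_2$ acting by inversion is \emph{imprimitive} --- this is precisely the observation made in Remark \ref{rem:gal_genus1}b) of the paper. Your argument forces the point stabilizer $C_d$ to act on $T$ with no proper nontrivial invariant subgroup, which fails for inversion on $C_p\times C_p$ (every subgroup is inversion-invariant); so, taken literally, your proof would wrongly exclude a case that the proposition includes. In effect you have proved the statement only under the hypothesis of geometric indecomposability. (Contrary to your closing remark, the elliptic-curve setup you worried about is standard; this hypothesis mismatch is the real obstacle.)

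The repair is to get normality of the translation subgroup $T=G\cap E$ in $A$ \emph{before} invoking any primitivity, and then to run your argument inside $A$ rather than $G$. Concretely: the nonidentity elements of $T$ are exactly the elements of $G$ acting without fixed points on geometric points of the genus-$1$ curve (any $(\tau_v,\alpha)$ with $\alpha\neq 1$ has a fixed point since $\alpha-1$ is an isogeny, hence surjective), and after extending constants the elements of $A$ act semilinearly on the curve, so conjugation by $A$ preserves fixed-point-freeness and hence preserves $T$; thus $T\trianglelefteq A$. Now primitivity of $A$ gives: $T$ is trivial or transitive. If $T=1$, then $G$ embeds in $\mathrm{Aut}(E,O)$, so it is abelian and transitive, hence regular, and the genus contradiction you give applies (no primitivity needed at all in this case). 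If $T$ is transitive, it is regular (abelian and transitive); a minimal normal subgroup of $A$ contained in $T$ is elementary abelian and transitive, hence regular, hence equals $T$, so $T\cong C_p$ or $T\le E[p]\cong C_p\times C_p$, giving $n=p$ or $p^2$; then $G=T\rtimes C_d$ with $C_d\cong G/T\hookrightarrow \mathrm{Aut}(E,O)$, $d>1$ (else $E\to E/G$ would be unramified and the quotient of genus $1$), and $A\le N_{S_n}(T)=AGL_i(p)$. Your identification $d\in\{2,3,4,6\}$ and the matching with the tuples of a) then go through unchanged.
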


One obtains even stronger conclusions upon adding certain extra assumptions on the function $f$ and its field of definition.
\begin{remark}
\label{rem:gal_genus1}
\begin{itemize}
\item[a)] % Discussion of case $\deg(f)=p$ when $K=\mathbb{Q}$.
If $K=\mathbb{Q}$, then the case $\deg(f)=p$ in Conclusion b) of Proposition \ref{prop:gal_genus1} occurs only for finitely many primes $p$. More precisely, the only possibility for the tuple ramification indices is then $(2,2,2,2)$, and $p\le 163$, see \cite[Theorem 6.7]{GMS}. The reason for the latter conclusion is that such functions arise from elliptic curves over $\mathbb{Q}$ with a $p$-isogeny, and Mazur's theorem \cite{Mazur} yields that this can happen only for an explicitly known list of $p\le 163$.
\item[b)] %Discussion of {\it geometric} indecomposability assumption.
If $f$ is even assumed {\it geometrically} indecomposable, then in the case $\deg(f)=p^2$ of Proposition \ref{prop:gal_genus1}, the tuple $(2,2,2,2)$ of ramification indices is not possible. Indeed, in this case one gets geometric monodromy group $G\cong (C_p\times C_p)\rtimes C_2\le AGL_2(p)$ for an odd prime $p$, and this group cannot act primitively on $p^2$ points. Indeed, an element of order $2$ in $GL_2(p)$ has to have a nontrivial eigenspace in $(\mathbb{F}_p)^2$, yielding an overgroup $C_p\rtimes C_2$ of the point stabilizer $C_2\le G$. For the same reason, it follows in the remaining three cases of ramification tuples $(3,3,3)$, $(2,4,4)$ and $(2,3,6)$ that $p$ cannot be congruent to $1$ modulo $d$. In particular, the geometric monodromy group is of the form $G=(C_p\times C_p)\rtimes C_d$ with a {\it non-central} subgroup $C_d\le GL_2(p)$. Since the normalizer of such a subgroup in $GL_2(p)$ is necessarily solvable, %Source?
it follows that the last assertion in Proposition \ref{prop:gal_genus1} can then be strengthened to $A$ being a {\it solvable} subgroup of $AGL_2(p)$. If additionally $K=\mathbb{Q}$, the shape of the arithmetic monodromy group is determined completely as well. Indeed, $f$ then arises from the multiplication-by-$p$ map on an elliptic curve over $\mathbb{Q}$ with complex multiplication by $\mathbb{Q}(\zeta_d)$, cf.\ \cite[Theorem 6.6]{GMS}. Theorems 6.21, 6.23 (together with Remark 6.27) and 6.25 of \cite{GMS} then yield that for each of the three possibilities of $d$, the group $\Mon(f)$ is of the form $(C_p\times C_p)\rtimes (C_{p^2-1}\rtimes C_2) = A\Gamma L_1(\mathbb{F}_{p^2})$ (with the outer $C_2$ acting on $C_{p^2-1}$ as the field automorphism of $\mathbb{F}_{p^2}$).
%
%May also explicitly mention that if geom. indec. over Q, then comes from multiplication-by-p map on elliptic curve over Q with CM by Q(\zeta_d). [see \cite[Theorem 6.6]{GMS}]
\end{itemize}
\end{remark}

\subsubsection{Monodromy of large degree rational functions}
\label{sec:largemon}

The full classification of monodromy groups of rational functions has been an ongoing project for several decades. In the following theorem, we summarize the contributions from several authors in a way suitable for our applications. For this, we need to recall the notion of {\it product action}. See, e.g., \cite{MueTwo} or \cite[Section 5]{GN}.  
%% Following is quoted from Mueller, Cyclic two-orbits...
Set $\Delta=\{1,2,\dots,r\}$ for $r\ge 2$, and let $m\ge 2$ be an integer. Let $S_r \wr S_m := \underbrace{(S_r \times \dots\times S_r)}_{m \text{ times}} \rtimes S_m$ (where $S_m$ acts via permuting the $m$ copies of $S_r$) be the wreath product. The cosets of $S_{r-1}\wr S_m$ in $S_r\wr S_m$ yield a natural action of $S_r\wr S_m$ on $\Omega:= \Delta^m$. We say that a permutation group acts via the product action, if it is permutation equivalent to a transitive subgroup of $S_r \wr S_m$ in this action.

\begin{theorem}
\label{thm:OldClassification}
There is an absolute constant $N \in \mathbb{N}$ such that, if $K$ is a subfield of $\mathbb{C}$ and $f(X) \in K(X)$ is a geometrically indecomposable rational function of degree $n\ge N$, 
then one of the following holds for the monodromy group $\Mon(f)$.
\begin{itemize}
\item[(1)] $\Mon(f)$ is isomorphic, as an abstract group, to an alternating or symmetric group.
\item[(2)] %Product type action of length 2\le t\le 8
$\Mon(f)$ acts as a group of product type, and more precisely $L^t < \Mon(f)\le \mathrm{Aut}(L)^t\wr S_t$ for some $t\in \{2,\dots, 8\}$ where $L$ is isomorphic, as an abstract group, to some alternating group.
\item[(3)] $n=p^i$ for a prime $p$ and an integer $i\le 2$. Furthermore, $\Mon(f) \le AGL_i(p)$ is solvable, and $\mathrm{Split}(f(X)-t/K(t))$ is of genus $\tilde{g}\le 1$.%Maybe this case should reference the g=1 classification above?
\end{itemize}
\end{theorem}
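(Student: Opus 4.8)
The plan is to work entirely on the side of the geometric monodromy group $G := \Mon_{\overline{K}}(f)$. Since $f$ is geometrically indecomposable of degree $n$, $G$ is a \emph{primitive} permutation group of degree $n$, and it carries a branch cycle description $(\sigma_1,\dots,\sigma_r)$ satisfying the genus-zero relation $2n-2 = \sum_{i=1}^r \mathrm{ind}(\sigma_i)$ of Proposition \ref{prop:rh}; that is, $G$ is a \emph{primitive genus-zero group}. The arithmetic monodromy group $\Mon(f)$ satisfies $G \trianglelefteq \Mon(f) \le N_{S_n}(G)$, so once $G$ is pinned down, the assertions about $\Mon(f)$ (its abstract isomorphism type in (1), the overgroup bounds in (2) and (3)) follow by passing to the normalizer. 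The classification of primitive genus-zero groups of large degree is precisely the content of the (now completed) Guralnick–Thompson program, so the proof is a synthesis of the relevant structural results rather than one self-contained computation.

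First I would apply the O'Nan--Scott theorem to split $G$ into its possible types: affine, almost simple, simple diagonal, product action, and twisted wreath. The decisive input is the resolution of the Guralnick--Thompson conjecture \cite{GT} (Aschbacher, Frohardt--Magaard, and others), by which the nonabelian non-alternating composition factors of genus-zero groups form a finite set, and hence cannot occur once $n$ exceeds an absolute bound. In the almost simple case, the fixed-point and index estimates of Frohardt--Magaard (resting on CFSG) then force the socle to be an alternating group for large $n$; since $G$ acts primitively, this gives case (1), with $G$ and hence $\Mon(f)$ abstractly alternating or symmetric, irrespective of whether the underlying action is natural, on $k$-subsets, or on partitions (here one uses $\mathrm{Aut}(A_m)=S_m$ to control the normalizer). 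The simple diagonal and twisted wreath types are excluded for large $n$: in the Guralnick--Thompson analysis the point stabilizers in these types force every nontrivial element to have index too large relative to what the Riemann--Hurwitz identity $2n-2=\sum \mathrm{ind}(\sigma_i)$ permits, so no generating genus-zero tuple exists.

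Next I would treat the product action type, where $L^t \trianglelefteq G \le \mathrm{Aut}(L)^t \wr S_t$ with $L$ simple acting on a set $\Delta$ and $G$ acting on $\Delta^t$, so $n = |\Delta|^t$. Finiteness of exceptional composition factors again forces $L$ alternating, yielding the shape of case (2); the bound $t \le 8$ is then extracted from a direct Riemann--Hurwitz analysis of genus-zero tuples in wreath products, where the index of a generator in the product action grows with $t$ and overshoots $2n-2$ unless $t$ is small. The remaining affine type, $G \le AGL_d(p)$ with $n = p^d$, is handled by the classification of affine primitive genus-zero groups (Guralnick--Neubauer and successors): for large $n$ the dimension is bounded and a finer analysis gives $d \le 2$ with $G$ solvable. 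To obtain the last assertion of case (3), that the splitting field has genus $\tilde g \le 1$, I would match the surviving affine groups against the explicit lists of Propositions \ref{prop:gal_genus0} and \ref{prop:gal_genus1}: the Galois genus-zero cases c)--e) of Proposition \ref{prop:gal_genus0} have bounded degree and drop out, leaving the cyclic and dihedral degree-$p$ cases and the degree-$p^2$ families attached to elliptic curves, all of which have Galois closure of genus $\le 1$.

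The step I expect to be the main obstacle is the almost simple case with socle of Lie type: excluding these for large degree is exactly the hard, CFSG-dependent heart of the Guralnick--Thompson program, demanding sharp upper bounds on $\mathrm{ind}(\sigma)/n$ (equivalently, lower bounds on fixed-point ratios) uniformly across all Lie-type groups, ranks, and characteristics. A secondary difficulty is making the degree bound $N$ \emph{absolute} rather than merely finite family-by-family; this is where the recent results of Neftin--Zieve \cite{NZ} are needed to close the last cases and fix the uniform constant, and I would cite these directly rather than reprove them.
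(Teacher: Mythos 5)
Your proposal follows essentially the same route as the paper's proof: reduce to the primitive geometric monodromy group $\Mon_{\overline{K}}(f)$, invoke the O'Nan--Scott classification together with the Guralnick--Thompson program results (Aschbacher, Shih, Guralnick--Neubauer, Guralnick--Thompson, Frohardt--Magaard) to land in the almost simple, product-action, or solvable/affine cases, and then transfer the conclusion from $\Mon_{\overline{K}}(f)$ to $\Mon(f)$ via triviality of the symmetric centralizer of a nonabelian primitive group, so that $\Mon(f)$ embeds into $\mathrm{Aut}(\Mon_{\overline{K}}(f))$ and retains the same type --- exactly the paper's argument, including your use of $\mathrm{Aut}(A_m)=S_m$. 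The one substantive divergence is your closing claim that Neftin--Zieve is needed to make $N$ absolute: in fact the paper proves this theorem entirely from the older literature (Frohardt--Magaard's resolution of the Guralnick--Thompson conjecture already yields an absolute bound in the genus-zero case), and reserves the unpublished results of \cite{NZ} for the stronger Theorem \ref{NZTheo}, which pins down the actual permutation actions rather than merely the abstract isomorphism types.
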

The statement of Theorem \ref{thm:main} thus boils down to showing that if $n=\deg(f)$ is sufficiently large and $f$ falls into case (1) or (2) in Theorem \ref{thm:OldClassification}, then $f$ cannot appear in a minimal locally representing set $\{f_{1},f_{2},\dots, f_{r} \}$.
\begin{proof}[Proof of Theorem \ref{thm:OldClassification}]
First, note that the geometric indecomposability assumption amounts to saying that the geometric monodromy group $\Mon_{\overline{K}}(f)$ acts as a primitive group.
Reducing the precise possibilities for the group $\Mon_{\overline{K}}(f)$ inside the classification of all primitive permutation groups (known as the ``O'Nan-Scott theorem") amounts to combining the results of several authors. A short and concise overview is given in \cite{NZ}, ``Proof of Theorem 1.1". 
The case ``$\Mon_{\overline{K}}(f)$ solvable" is the easiest, and reduces readily to the additional requirement that the splitting field of $f(X)-t$ is of genus $\le 1$, whence one arrives at the assertion of 3) via the results of Sections \ref{sec:gal_genus0} and \ref{sec:gal_genus1}. For the remaining cases, since we do not wish to go into the details of the O'Nan-Scott theorem, we content ourselves with reiterating that, by the combined results of \cite{Asch}, \cite{GN}, \cite{GT} and \cite{Shih}, the group $G:=\Mon_{\overline{K}}(f)$ is either an almost simple group or acts as a product type group $L^t < G\le \mathrm{Aut}(L)^t\wr S_t$ for some nonabelian simple group $L$ and some $t\in \{2,\dots, 8\}$. Furthermore, by a result of Frohardt and Magaard (\cite{FM}, proving a famous conjecture of Guralnick and Thompson), the composition factors of $G$ are necessarily all cyclic or alternating  for $n$ sufficiently large. This yields Case 1) or 2) with $\Mon(f)$ so far replaced by its normal subgroup $\Mon_{\overline{K}}(f)$. Note however, that the symmetric centralizer of a nonabelian primitive permutation group is necessarily trivial, see \cite[Theorem 4.2.A(vi)]{DM}. This implies that $\Mon(f) \trianglerighteq \Mon_{\overline{K}}(f)$ embeds into the automorphism group of $\Mon_{\overline{K}}(f)$, and is thus still of the same type (i.e., product type with the same parameter $t$, or almost simple) as the former. Hence, the assertion follows for $\Mon(f)$.
%\marginpar{More explanation for last step? I think it is fine}
\end{proof}

\begin{remark}
\label{rem:largemon}
%\marginpar{Ref.1 says set $S$ is not defined in sufficient precision for later application. I actually disagree and think it's logically safe, but we should make clear what we mean in the application. From my point of view the misunderstanding is that we say \textit{the} set S from Remark 3 while in Remark 3 S is not uniquely determined.\\
%JK: Got it. So the new fix would be fine?}
As an immediate consequence, there exists a unique minimal finite set $\mathcal{S}$ of non-abelian simple groups such that every geometrically indecomposable rational function $f$ over a number field whose monodromy group has some nonsolvable composition factor outside of $\mathcal{S}$ falls into Case (1) or Case (2) of Theorem \ref{thm:OldClassification}. 
\end{remark}
As mentioned in the introduction, the most recent (and so far only available in preprint form) results on monodromy groups of geometrically indecomposable covers of bounded genus, will allow us to prove a stronger version of Theorem \ref{thm:main}. The below was conjectured by Guralnick-Shareshian in \cite{GS}, and is shown (in even more generality) by Neftin and Zieve in \cite{NZ}. One of its main achievements is the determination not only of possible monodromy groups as abstract groups, but also of all possible ``low genus permutation actions". 
For the case of genus-$0$ actions, corresponding to monodromy groups of rational functions, the result reads as follows:
\footnote{Note that \cite{NZ} gives results in term of the geometric monodromy group. The conclusion for the arithmetic monodromy group, however, follows exactly as in the proof of Theorem \ref{thm:OldClassification}.}%; for case 4), we refer to Section \ref{sec:genus1}.}
%
% This footnote may need to be updated.
%
\begin{theorem}[\cite{NZ}, Theorem 1.1] \label{NZTheo} 
There is an absolute constant $N \in \mathbb{N}$ such that, if $K$ is a subfield of $\mathbb{C}$ and $f(X) \in K(X)$ is a geometrically indecomposable rational function of degree $n\ge N$, 
%Let $\textrm{Mon}(f) := \textrm{Gal}(f(X)-t / K(t))$.
then one of the following holds.
\begin{itemize}
\item[(1)] $\mathrm{Mon}(f)=A_{n}$ or $S_{n}$ in the natural degree $n$ action,
\item[(2)] $\mathrm{Mon}(f)=A_{\ell}$ or $S_{\ell}$ where $n = \ell(\ell-1)/2$ with the action on the roots of $f(X)-t$ induced by the action on $2$-element subsets of $\{1,2,\dots, \ell \}$,
\item[(3)] $A_{\ell}^{2} < \mathrm{Mon}(f) \leq S_\ell \wr C_2 = S_{\ell}^2 \rtimes C_{2}$, acting in the product action of degree $n=\ell^{2}$,
%\item[(4)] $\text{Mon}(f) \leq C_{p}^{i}\rtimes C_{k}$, where $n=p^{i}$, $i \leq 2$ and $k \leq 6$.
\item[(4)] $n=p^i$ for a prime $p$ and integer $i\le 2$, and $\mathrm{Mon}(f) \le AGL_i(p)$ is solvable.
%\marginpar{(4) not quite correct anymore after notation change to $K$; still true though that $Mon(f)$ is solvable with socle $C_p^i$ ($i\le 2$).}
\end{itemize}
\end{theorem}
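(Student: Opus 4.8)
The plan is to bootstrap from the coarser classification already recorded as Theorem \ref{thm:OldClassification}, refining its purely abstract group-theoretic output into a statement about the actual permutation action and, in the process, discarding every genus-incompatible action. As in the proof of Theorem \ref{thm:OldClassification}, I would argue first for the geometric monodromy group $G := \Mon_{\overline{K}}(f)$, which acts primitively on the $n = \deg(f)$ roots of $f(X)-t$ by geometric indecomposability, and recover the statement for $\Mon(f)$ at the end: since $\Mon(f)$ embeds into $\mathrm{Aut}(G)$ and acts on the same set, the permutation-action type (natural, $2$-subset, or product) is inherited.

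Theorem \ref{thm:OldClassification} leaves exactly three possibilities for $G$. The solvable case $G \le AGL_i(p)$ with $n = p^i$ and $i \le 2$ is already verbatim conclusion (4), so nothing further is needed there. This reduces the problem to the two remaining cases: (i) $G$ is almost simple with socle an alternating group $A_\ell$, acting primitively on $n$ points; and (ii) $G$ is of product type, $L^t < G \le \mathrm{Aut}(L)^t \wr S_t$ with $L \cong A_\ell$ and $t \in \{2,\dots,8\}$. In both cases the single governing constraint is that $f$ is a \emph{rational} function, i.e.\ its branch cycle description realizes a genus-zero cover: by Proposition \ref{prop:rh} any generating tuple $(\sigma_1,\dots,\sigma_r)$ with $\sigma_1 \cdots \sigma_r = 1$ must satisfy $\sum_{i=1}^r \mathrm{ind}(\sigma_i) = 2n - 2$. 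The task is therefore to decide, for each primitive action arising in (i) and (ii), whether such a \emph{genus-zero system} can exist once $n$ is large, and to read off the survivors.

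For case (i) I would run through the primitive actions of $A_\ell$ and $S_\ell$ supplied by the O'Nan--Scott description of their maximal subgroups — the natural action, the actions on $k$-element subsets, the actions on ordered and unordered partitions, and the remaining actions, most of which are already excluded for an alternating socle of large degree — and for each try to build or obstruct a genus-zero system. As a first filter I would estimate $\mathrm{ind}(\sigma)$ from the natural cycle type of $\sigma$; for instance a natural transposition has index $\binom{\ell-2}{k-1}$ on $k$-subsets, so that for $k \ge 3$ even the elements of smallest support already carry a large index. The decisive and harder step is to show that the identity $\sum_i \mathrm{ind}(\sigma_i) = 2n-2$ cannot be met \emph{compatibly} with the demands that the $\sigma_i$ generate $G$ and multiply to $1$: one must classify the cycle types admissible in a genus-zero system and prove that for $k \ge 3$ (and for the partition and other actions) no consistent system exists for large $\ell$, while for $k = 1$ and $k = 2$ one does, giving conclusions (1) and (2). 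The analogous program in case (ii) treats the product action on $\Delta^m$: a single-coordinate element of minimal index contributes $|\Delta|^{m-1}$ times the minimal base index, and the fine analysis of admissible systems forces $m = 2$ with the natural degree-$\ell$ base, hence $t = 2$ and $A_\ell^2 < G \le S_\ell \wr C_2$ of degree $\ell^2$ (conclusion (3)), while excluding $m \ge 3$, non-natural bases, and $t \ge 3$.

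I expect the genuine obstacle to be exactly this structural classification, rather than the bookkeeping with Proposition \ref{prop:rh}, which by itself yields only a necessary inequality (naive counting of the index budget does not already exclude, say, $k = 3$ or $m = 3$). Ruling out all primitive actions beyond the three listed requires sharp, uniform-in-$\ell$ control of the possible cycle types of low-index elements, combined with the generation and product-one constraints and a separate treatment of the finitely many small-parameter exceptions. This is precisely the content carried out in \cite{NZ}, building on the fixed-point-ratio and genus-bounding machinery for finite simple groups that already underlies Theorem \ref{thm:OldClassification}.
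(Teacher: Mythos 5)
This statement is an external citation ([NZ, Theorem 1.1]): the paper offers no internal proof, only a footnote noting that the arithmetic-monodromy formulation follows from the geometric one via the trivial-centralizer argument already used in the proof of Theorem \ref{thm:OldClassification}. Your proposal is essentially the same treatment — your geometric-to-arithmetic transfer is exactly that footnote's argument, and the decisive step you describe (determining which primitive actions of alternating-socle and product-type groups admit genus-zero tuples for large $n$, refining the abstract-group conclusion of Theorem \ref{thm:OldClassification} to the specific actions (1)--(3)) is, as you yourself acknowledge, precisely the content that must be, and in the paper is, imported wholesale from \cite{NZ}.
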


\subsection{Intersectivity, exceptionality and related group-theoretic notions}
\label{sec:intersect}

As we will see in Section \ref{sec:firstobs}, the notion of locally representing sets is closely connected with the following group-theoretical notion, which has been studied frequently (sometimes with slightly differing terminology), see, e.g., \cite{BP} or \cite{BPS}.
\begin{definition}
\label{defi:normal_cov}
Let $G$ be a finite group. A normal covering of $G$ is a collection of proper subgroups $U_1,\dots, U_r\subsetneq G$ such that $\cup_{x\in G}\cup_{i=1}^r U_i^x = G$, i.e., every element of $G$ lies in a conjugate of some $U_i$. A normal covering consisting of exactly $r$ subgroups is also called a normal $r$-covering.
\end{definition}
Via action on the union of the coset spaces of all $U_i$, $i=1,\dots, r$, a normal covering gives rise to a permutation action of $G$ in which every element of $G$ fixes at least one point, whereas $G$ itself has no fixed point.

One reason for the relevance of normal coverings is the following application in Galois theory:
Let $G=\mathrm{Gal}(f/K)$ be the Galois group of a separable polynomial $f=f_1\cdots f_r$ over a number field $K$ (with irreducible polynomials $f_i$ of degree $\ge 2$ for all $i$), and let $U_i\le G$ be the stabilizer of a root of $f_i$ ($i=1,\dots r$).  Then $U_1,\dots, U_r$ being a normal covering of $G$ translates to the property that every element of $G$ fixes a root of $f$. Due to Chebotarev's density theorem, the latter is equivalent to $f$ having a root in all but finitely many completions $K_p$ of $K$; compare \cite[Proposition 1]{Sonn}. Polynomials with this property are known in the literature as (weakly)\footnote{Usually, ``intersective" is understood to imply existence of roots in {\it all} $K_p$, whereas ``weakly intersective" refers to the same property for almost all $K_p$. Since the main notions in our paper deal with properties for almost all primes, we will use the term ``intersective" instead of ``weakly intersective" for convenience.} intersective polynomials. Furthermore, $f$ is called newly intersective if it has no proper intersective divisors. Such polynomials were studied, e.g., in \cite{BB} or \cite{Sonn}.
%\marginpar{Reference to Bilu-Berend? Maybe use term ``intersective" more generally for property (*), even when not over number fields?}

Since a finite transitive permutation group necessarily possesses a fixed point free element, an irreducible polynomial cannot be intersective. On the other hand, it is straightforward to see that for every non-cyclic group $G$ which occurs as a Galois group over $K$, there exists an intersective polynomial $f=f_1\cdots f_r\in K[X]$ with Galois group $G$, and the smallest number $r$ of factors of such $f$ is directly related to the ``normal covering number" of $G$, i.e., the smallest number $r$ of proper subgroups of $G$ such that $G$ has a normal $r$-covering, cf.\ \cite{BP}. While being the Galois group of an intersective polynomial is thus not a restrictive condition, recall that being the monodromy group of a rational function is quite restrictive, and it is the combination of these two that will become relevant for us.

%\marginpar{Include also something on exceptional polynomials (in the sense of Schur) here? But that could only really come after ``general results on covers"...}

Our notions also relate naturally to the concept of ``exceptional polynomials" (or more generally exceptional covers). Recall that a polynomial $f\in K[X]$ over a number field is called (arithmetically) exceptional if $f$ induces a bijection on infinitely many residue fields of primes of $K$. This property, once again, translates nicely to a permutation-group theoretic property. In particular, exceptionality necessarily yields a pair $(G,N)$ of permutation groups with the following property, see, e.g., \cite[Lemma 3.3]{GMS}:

(*) $N\trianglelefteq G (\le S_n)$ are two subgroups with the same orbits, and there exists a coset of $N$ in $G$ in which every element fixes a point.

Note that in the classical setup of arithmetically exceptional functions, $G$ and $N$ in (*) are actually transitive, namely equal to the arithmetic and geometric monodromy group of the given function $f$. The generalization to not necessarily irreducible covers has been considered in depth in \cite{Fried}.

The group-theoretical condition (*) is naturally related to normal coverings and intersectivity; indeed, the special case of (*) where $N=G$ and $G$ itself has no fixed point translates to the point stabilizers providing a normal covering of $G$. In the proof of Theorem \ref{thm:main} in Section \ref{sec:mainproofs}, Condition (*) will however also appear with $N<G$. We thus recall some important results on normal coverings of cosets $\sigma N$ by subgroups of certain groups $G$.

\begin{proposition}
\label{prop:groupcover}
There exists a constant $c>0$ such that the following holds. Let $G=S_n$ or $A_n$, let $1\ne N\trianglelefteq G$, and let $\sigma\in G$ and $U_1,\dots, U_r\le G$ be subgroups not containing $A_n$ such that every element of $\sigma N$ lies in a conjugate of some $U_i$, $i=1,\dots, r$. 
Then $r> cn$.
\end{proposition}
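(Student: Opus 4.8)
The plan is to reduce the assertion to covering a single coset of $A_n$ (or of $S_n$) by conjugates of \emph{maximal} subgroups, and then to exhibit many conjugacy classes inside $\sigma N$ no two of which can be met by the same subgroup. First I would use that for $n$ large the only nontrivial normal subgroups available are $N=A_n$ and, when $G=S_n$, $N=S_n$; hence $\abs{G/N}\le 2$ and $\sigma N$ is one of three explicit sets: all of $G$, the subgroup $A_n$, or the coset of odd permutations. Next, since $U_i\not\supseteq A_n$, each $U_i$ is contained in a maximal subgroup $M_i$ of $G$ with $M_i\not\supseteq A_n$ (of intransitive, imprimitive, or primitive type), and replacing $U_i$ by $M_i$ only enlarges the covered set $\bigcup_x U_i^x\subseteq\bigcup_x M_i^x$; so it suffices to lower-bound the number of such maximal subgroups. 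Finally, an element lies in a conjugate of $M_i$ exactly when $M_i$ contains an element of its cycle type, so the hypothesis becomes: every cycle type occurring in $\sigma N$ is realized inside some $M_i$.

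The heart of the argument is the choice of test classes. I would take the fixed-point-free elements of cycle type $(p,\,n-p)$ for primes $p$ with $n/2<p\le n-3$; here $\gcd(p,n)=1$, so these are derangements with two cycles of coprime length. Each type of maximal subgroup defeats such a class for a different reason. (i) For an intransitive $M_i=(S_k\times S_{n-k})\cap G$, the $\langle g\rangle$-invariant subsets are unions of the two orbits, of sizes $0,p,n-p,n$; thus $g$ stabilizes a $k$-set only for $k\in\{p,n-p\}$, and each intransitive maximal meets \emph{at most one} test class. (ii) An imprimitive maximal preserving $b\ge 2$ blocks of size $a\ge 2$ cannot contain such $g$: each union of a block-cycle is $\langle g\rangle$-invariant, hence a union of the two cycles, which forces $a$ or $b$ to divide $\gcd(p,n)=1$, contradicting $a,b\ge 2$. (iii) A primitive $M_i\not\supseteq A_n$ cannot contain a conjugate of $g$ either, because the power $g^{\,n-p}$ is a single $p$-cycle fixing $n-p\ge 3$ points, and by Jordan's theorem \cite{DM} a primitive group containing such a cycle already contains $A_n$. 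Consequently each admissible $M_i$ meets at most one test class, so covering all of them forces $r$ to be at least the number of such primes.

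By the prime number theorem this already yields $r\gg n/\log n$. To obtain the asserted \emph{linear} bound $r>cn$ with a uniform constant, and to treat the coset of odd permutations (for which the two-cycle family has the wrong parity, an element of type $(p,n-p)$ being even precisely when $n$ is even), I would invoke the sharp lower bounds for the normal covering numbers of $S_n$ and $A_n$, and their coset analogues, established in \cite{BP} and \cite{BPS}. These are proved by exactly the mechanism above, but with a more elaborate, parity-balanced family of derangement classes of both parities, together with the refined treatment of primitive maximal subgroups via the classification of primitive groups containing prescribed cycles.

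The main obstacle is twofold. The combinatorial control of intransitive and imprimitive maximal subgroups is elementary once fixed-point-free two-cycle classes are used. The genuinely hard points are, first, excluding \emph{primitive} maximal subgroups, which in the self-contained sketch is handled cleanly by Jordan's theorem only because a suitable power of the test element is a single prime-length cycle, and which in general requires the classification of primitive groups containing such cycles; and second, producing \emph{linearly many} hard classes with a constant independent of $n$, since the naive coprime two-cycle family gives only $\sim\varphi(n)/2$ classes, which is not bounded below by any fixed multiple of $n$. Overcoming this, while simultaneously balancing parity so that both possibilities for the coset $\sigma N$ are covered, is exactly the content of the covering-number results cited above, which I would use to conclude.
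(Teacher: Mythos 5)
Your self-contained warm-up is essentially sound: for test classes of type $(p,n-p)$ with $p$ prime and $n/2<p\le n-3$, intransitive subgroups meet at most one class, imprimitive ones meet none, and primitive subgroups not containing $A_n$ meet none by Jordan's theorem applied to the power $g^{n-p}$ (a $p$-cycle with at least $3$ fixed points). You also correctly diagnose its two limitations: it yields only $r\gg n/\log n$, and all these classes have sign $(-1)^n$, so they may lie entirely outside the coset $\sigma N$. For the case $\sigma N=G$ your final appeal to the linear lower bounds on normal covering numbers in \cite{BPS} is exactly what the paper does, so there the two arguments coincide.

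The genuine gap is your treatment of the case $N=A_n<G=S_n$, i.e.\ when $\sigma N$ is a single coset. You dispose of it by invoking ``coset analogues'' of the covering bounds as ``established in \cite{BP} and \cite{BPS}'', but those papers bound the normal covering number, i.e.\ the number of proper subgroups whose conjugates cover the \emph{whole} group; they contain no statement about covering a single coset of $A_n$ in $S_n$. This is not a cosmetic issue: the coset case is precisely the one needed in the paper's application (where $N$ plays the role of the geometric monodromy group inside the arithmetic one), and it is the only part of the proposition that does not follow verbatim from the cited literature. The paper closes it with a short reduction that your write-up is missing: if $\sigma\in A_n$, then every element of $A_n$ lies in an $S_n$-conjugate of some $U_i$, hence in an $A_n$-conjugate of one of the $2r$ proper subgroups $U_i\cap A_n$, $(U_i\cap A_n)^{(1,2)}$ of $A_n$, so $2r$ is at least the normal covering number of $A_n$; if $\sigma\notin A_n$, then $U_1,\dots,U_r,A_n$ is a normal covering of $S_n$ by proper subgroups, so $r+1$ is at least the normal covering number of $S_n$. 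Either way the bound of \cite{BPS} gives $r>cn$, and this reduction also renders the parity problem of your test classes moot. Your alternative suggestion, reworking the \cite{BPS} machinery with a parity-balanced family of derangement classes, may well be feasible, but it is a substantial project and not something you can cite; as written, your proof of the coset case rests on a result that does not exist in the cited references.
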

\begin{proof} For $N=G$, this amounts exactly to the result on normal covering numbers of symmetric and alternating groups shown in \cite{BPS}. The only other case $N=A_n$, $G=S_n$ follows instantly upon noting that in the case $\sigma\in A_n$, the $2r$ groups $U_i\cap A_n$, $(U_i\cap A_n)^{(1,2)}$ ($i=1,\dots, r$)  yield a normal covering of $A_n$, and for $\sigma\notin A_n$, the $r+1$ groups $U_1,\dots, U_r, A_n$ yield a normal covering of $S_n$.
\end{proof}

The following lemma relates normal coverings of cosets in almost simple permutation groups to such coverings in groups of ``product type" as introduced in Section \ref{sec:largemon}.
\begin{lemma} \label{lem:ProductFixedPointFree}
%(Existence of fixed point free elements in (each coset of the socle of) primitive groups such as $S_n\rtimes C_2\le S_{n^2}$.)
%\marginpar{Is it clear that the result holds for all product-type groups, i.e. for groups $H$ between $L^{k}$ and $G \wr S_{k}$? JK: But concluding something for all cosets in the whole wreath product is stronger, no? (Any coset is either fully in or outside of $H$.)}
Let $G$ be a finite almost simple group with socle $L$, let $U_1,\dots, U_r$ be subgroups of $G$ and assume that every coset of $L$ in $G$ contains an element which is  fixed point free in all the actions on cosets of $U_i$ in $G$ ($i=1,\dots, r$).
Then, for any fixed $k\ge 2$, every coset of $L^k$ in the wreath product $G\wr S_k (= G^k \rtimes S_k)$ contains an element which is fixed point free in all the actions of $G\wr S_k$ on cosets of $(U_i)^k \rtimes S_k$ ($i=1,\dots, r$).
%But how useful would this be; e.g., could a priori also have a product type action from selecting ``twisted" subgroups (conjugate via some automorphism depending on exact component)?? (Although maybe not for G=S_n...)
\end{lemma}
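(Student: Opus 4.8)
The plan is to reduce the statement about the wreath product to the single-group hypothesis by exploiting the explicit description of fixed points in a product action. First I would record the structural facts. The subgroup $L^k$ is normal in $G\wr S_k = G^k\rtimes S_k$ (it is normalized by each copy of $G$ since $L\trianglelefteq G$, and it is permuted by $S_k$), with quotient $(G/L)\wr S_k$; hence a coset of $L^k$ is specified by data $(\bar g_1,\dots,\bar g_k;\pi)$ with $\bar g_j\in G/L$ and $\pi\in S_k$, and within this coset one may replace each $g_j$ independently by an arbitrary representative of its coset $Lg_j = g_jL$. Moreover, the action of $G\wr S_k$ on the cosets of $U_i\wr S_k = (U_i)^k\rtimes S_k$ is precisely the product action on $\Omega_i^k$, where $\Omega_i = G/U_i$ carries the coset action of $G$.

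Second, the key tool is the classical fixed-point criterion for product actions. For $w=(g_1,\dots,g_k;\pi)$ and each cycle $(j_1,j_2,\dots,j_m)$ of $\pi$, form the cycle product $\gamma = g_{j_1}g_{j_2}\cdots g_{j_m}\in G$; this is well defined up to conjugacy (independent of the starting index and of passing to $\pi^{-1}$). Then $w$ fixes a point of $\Omega_i^k$ if and only if every cycle product $\gamma$ has a fixed point on $\Omega_i$, i.e.\ is conjugate into $U_i$. Equivalently, $w$ is fixed point free on $\Omega_i^k$ if and only if at least one cycle product is fixed point free on $G/U_i$. I would derive this by writing out the fixed-point equations $\omega_j = g_j\,\omega_{\pi^{-1}(j)}$ and solving them cycle by cycle: each cycle yields a subsystem that is solvable exactly when its cycle product fixes a point of $\Omega_i$, with the remaining coordinates then determined.

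Third comes the construction. Given a coset of $L^k$ with data $(\bar g_1,\dots,\bar g_k;\pi)$, fix any single cycle $c=(j_1,\dots,j_m)$ of $\pi$. The image in $G/L$ of its cycle product is forced to equal $\bar\gamma_c := \bar g_{j_1}\cdots\bar g_{j_m}$, independent of the chosen representatives. By hypothesis the coset of $L$ in $G$ corresponding to $\bar\gamma_c$ contains an element $\xi$ that is \emph{simultaneously} fixed point free on every $G/U_i$. I then realize $\gamma_c=\xi$: if $m\ge 2$, choose $g_{j_2},\dots,g_{j_m}$ arbitrarily in their cosets and set $g_{j_1} := \xi\,(g_{j_2}\cdots g_{j_m})^{-1}$, which a short coset computation shows lies in $Lg_{j_1}$; if $m=1$, simply set $g_{j_1}:=\xi$. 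All $g_j$ with $j$ outside $c$ are taken to be arbitrary representatives. The resulting element lies in the prescribed coset of $L^k$, and for every $i$ the cycle $c$ has fixed-point-free cycle product $\xi$ on $G/U_i$; by the criterion, $w$ is therefore fixed point free in all $r$ product actions at once.

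The only real subtlety — the part to handle carefully rather than the genuinely hard part — is bookkeeping the product-action fixed-point criterion against the chosen wreath-product multiplication convention, so that the cycle product is unambiguously an element of $G$ up to conjugacy, and verifying that forcing one cycle product to the value $\xi$ keeps the element inside the prescribed $L^k$-coset. Both are routine once the conventions are fixed. It is worth emphasizing that the argument uses only a \emph{single} cycle of $\pi$ and so works for every $k\ge 2$ and every $\pi$, including the identity (where each cycle has length $1$ and each $g_j$ is set individually to a simultaneously fixed-point-free representative of its coset).
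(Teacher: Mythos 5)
Your proposal is correct and follows essentially the same route as the paper's proof: identify the cosets of $(U_i)^k\rtimes S_k$ with $k$-tuples of cosets of $U_i$, reduce the fixed-point question to the cycle products of the permutation part, and invoke the hypothesis to place simultaneously fixed-point-free elements of $G$ in the prescribed $L$-cosets. The only (harmless) difference is that you force just one cycle product to be fixed point free while the paper does so for every cycle of $\tau$; both suffice, since a fixed point of the wreath-product element requires every cycle product to fix a point.
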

\begin{proof}
First, note that cosets of $(U_i)^k\rtimes S_k$ can be identified with $k$-tuples $(\omega_1,\dots, \omega_k)$, where the $\omega_j$ are cosets of $U_i$ in $G$. 
Any element $x$ of $G\wr S_k$ can be written as $x=((\sigma_1,\dots, \sigma_k), \tau)$ where $\sigma_1,\dots, \sigma_k\in G$, $\tau\in S_k$, and $x$ acts on cosets of $(U_i)^k\rtimes S_k$ via $(\omega_1\dots, \omega_k)^x = (\omega_{\tau^{-1}(1)}^{\sigma_{\tau^{-1}(1)}}, \dots, \omega_{\tau^{-1}(k)}^{\sigma_{\tau^{-1}(k)}})$. 
Assume now that such  $(\omega_1,\dots, \omega_k)$ is a fixed point of $x$, and for any $j\in \{1,\dots, k\}$, let $d_j$ be the length of the cycle of $\tau$ containing $j$. Considering the action of $x^{d_j}$, one obtains that $\sigma_j \sigma_{\tau(j)}\cdots \sigma_{\tau^{d_j-1}(j)}$ fixes $\omega_j$.

Now fix the $L^k$-coset of $x$ as above. This amounts to fixing $\tau\in S_k$, as well as prescribing the $L$-coset in $G$ of each $\sigma_i$. To show the assertion, it suffices to show the existence of $\sigma_1,\dots, \sigma_k$ in the prescribed cosets  such that for all $j=1,\dots, r$, the element $\sigma_j \sigma_{\tau(j)}\cdots \sigma_{\tau^{d_j-1}(j)}$ is fixed point free. But this just amounts to producing finitely many (namely, one for each cycle of $\tau$) fixed point free elements in prescribed $L$-cosets in $G$, which is possible by assumption.
\end{proof}

 \section{First observations on locally representing sets}
 \label{sec:firstobs}
%NEW SUGGESTION FOR THIS SUBSECTION FROM HERE:

We now turn to the investigation of locally representing sets of rational functions. 
We begin with some observations on the behavior of such sets under composition, which follow straight from the definitions.
\begin{lemma}
\label{lem:firstobs}
Assume $f_1,\dots, f_r \in K(X)$ locally represent $K$. Then the following hold:
\begin{itemize}
\item[a)]
If  $\lambda$ as well as $\mu_1,\dots, \mu_r\in K(X)$ are fractional linear transformations, then
$\lambda\circ f_1\circ \mu_1, \lambda\circ f_2\circ \mu_2, \dots, \lambda\circ f_r\circ \mu_r$ locally represent $K$.
\item[b)] If $f_i = g_i\circ h_i$ are decompositions of $f_i$ into rational functions $g_i, h_i\in K(X)$, then $g_1,\dots, g_r$ locally represent $K$.
\end{itemize}
\end{lemma}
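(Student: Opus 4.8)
The plan is to treat both parts through a single elementary observation: every degree-one rational function $\ell\in K(X)$ induces a bijection of $\mathbb{P}^1(K_p)=K_p\cup\{\infty\}$ for each prime $p$, since its coefficients lie in $K\subseteq K_p$ and its inverse is again a degree-one function over $K$. With this in hand, both statements should fall out of the definition of a $K_p$-value.

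I would dispatch b) first, as it is the cleaner of the two. For any $\beta\in K_p\cup\{\infty\}$ one has $f_i(\beta)=g_i(h_i(\beta))$ with $\delta:=h_i(\beta)\in K_p\cup\{\infty\}$ (because $h_i$ is defined over $K$); hence every $K_p$-value of $f_i$ is automatically a $K_p$-value of $g_i$. So for a fixed $t_0\in K$ I would take the finite set $S$ witnessing that $t_0$ is a pseudo-value of $\{f_1,\dots,f_r\}$: for each $p\notin S$ some $f_i$ has $t_0$ as a $K_p$-value, and the inclusion just noted gives the same for $g_i$. As $t_0$ was arbitrary, $g_1,\dots,g_r$ locally represent $K$.

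For a), the idea is to transport values across $\lambda$ and the $\mu_i$. The inner transformations are free: since $\mu_i$ permutes $\mathbb{P}^1(K_p)$, the functions $f_i$ and $f_i\circ\mu_i$ have identical sets of $K_p$-values for every $p$, so I may ignore them and concentrate on $\lambda$. The key computation is that, for $t_0\in K$, the value $t_0$ is a $K_p$-value of $\lambda\circ f_i\circ\mu_i$ if and only if $s_0:=\lambda^{-1}(t_0)$ is attained by $f_i$ on $\mathbb{P}^1(K_p)$: from $f_i(\beta)=s_0$ one sets $\gamma:=\mu_i^{-1}(\beta)\in K_p\cup\{\infty\}$ and checks $(\lambda\circ f_i\circ\mu_i)(\gamma)=t_0$, and the converse is the same manipulation run backwards. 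Now $s_0\in K\cup\{\infty\}$. Whenever $s_0\in K$, I would feed $s_0$ directly into the hypothesis: for almost all $p$ some $f_i$ takes the value $s_0$ over $K_p$, hence some $\lambda\circ f_i\circ\mu_i$ takes the value $t_0$, so $t_0$ is a pseudo-value of the transformed set.

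The remaining case $s_0=\infty$ --- which occurs only for the single value $t_0=\lambda(\infty)$ when $\lambda$ is non-affine --- is where I expect the real difficulty, since the bare hypothesis quantifies only over finite $t_0\in K$ and says nothing about $\infty$. To finish I would extend the notion of pseudo-value to $\infty$ (meaning that some $f_i$ has a $K_p$-rational pole for almost all $p$) and prove that local representation of all of $K$ forces $\infty$ to be a pseudo-value as well. This is immediate whenever some $f_i$ already has a $K$-rational pole, e.g.\ a polynomial or any $f_i$ with $f_i(\infty)=\infty$, which covers the motivating examples; the genuinely delicate situation is when all poles of all $f_i$ are irrational, where the statement becomes a fixed-point assertion about the Galois action on the poles. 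There I would attack it via the correspondence relating pseudo-values to normal coverings of Galois groups (cf.\ Section~\ref{sec:intersect}), aiming to show that the pole-stabilizers must cover the relevant Galois group. Pinning down this last point is the crux; everything else is formal manipulation of the definitions.
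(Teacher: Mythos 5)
Your part b) is complete and correct, and so is the bulk of part a): composing with the $\mu_i$ costs nothing since each $\mu_i$ permutes $\mathbb{P}^1(K_p)$, and for any $t_0$ with $s_0=\lambda^{-1}(t_0)\in K$ the hypothesis applied to $s_0$ finishes the argument. You have also correctly located the one point where the lemma is \emph{not} ``straight from the definitions'' (which is all the justification the paper offers): when $\lambda$ is non-affine, the single value $t_0=\lambda(\infty)$ requires knowing that $\infty$ is a pseudo-value, i.e.\ that for almost all $p$ some $f_i$ has a pole in $K_p\cup\{\infty\}$, and this does not follow formally from a hypothesis that quantifies only over elements of $K$. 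However, your proposal stops exactly there: you sketch a strategy (``the pole-stabilizers must cover the relevant Galois group'') and explicitly defer its proof, so as written the argument has a genuine gap. Be aware that no elementary workaround will fill it: any fixed $t_0\in K$ is $p$-integral for almost all $p$, so one cannot, for instance, derive a contradiction from the $p$-adic boundedness of pole-free value sets; some Chebotarev-type input is unavoidable.

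The gap can be closed with tools the paper develops immediately afterwards, in two ways. Cleanest is to drop the case distinction entirely and deduce a) from Lemma \ref{lem:transl}, whose proof is independent of Lemma \ref{lem:firstobs}, so there is no circularity (functions of degree $1$ make everything trivial, so assume all degrees are $>1$). Writing $s=\lambda^{-1}(t)$ one has $K(t)=K(s)$, and $x$ is a root of $(\lambda\circ f_i\circ\mu_i)(X)-t$ if and only if $\mu_i(x)$ is a root of $f_i(X)-s$; since $\mu_i$ and $\mu_i^{-1}$ lie in $K(X)$, the two families generate the same splitting field over $K(t)=K(s)$ and their Galois actions on roots are permutation-isomorphic. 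Hence condition 4) of Lemma \ref{lem:transl} holds for $\{f_i\}$ if and only if it holds for $\{\lambda\circ f_i\circ\mu_i\}$, and the equivalence 2)$\Leftrightarrow$4) gives a). Alternatively, to finish your own route, run the specialization argument from the implication 4)$\Rightarrow$3) in the proof of Lemma \ref{lem:transl} at the branch point $t_0=\infty$: if no $f_i$ has a pole at $\infty$, the fiber there is cut out by the denominators $h_i$, whose joint Galois group over $K$ is identified with $D/I$ for the decomposition and inertia groups at a place over $t\mapsto\infty$; lifting a Frobenius generator to $\tau\in D$, condition 4) gives a root of some $f_i(X)-t$ fixed by $\tau$, hence (as $I\trianglelefteq D$) the Frobenius fixes the corresponding $I$-orbit, i.e.\ a root of some $h_i$, which Hensel's lemma then promotes to a $K_p$-rational pole. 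That is precisely the ``pole-stabilizers cover the group'' statement you wanted, and it confirms that your proposed strategy does work once executed.
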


The following characterization of locally representing sets of rational functions will be crucial for the further treatment. The equivalences 1) to 3) demonstrate the extent to which locally representing and non-representing sets of functions differ from each other, whereas 4) and 5) translate the arithmetic property into a group-theoretical one, relating directly to the notions of normal coverings and  intersectivity discussed in Section \ref{sec:intersect}.
\begin{lemma}
\label{lem:transl}
Let $K$ be a number field, and let $f_1,\dots, f_r\in K(X)$ be rational functions 
 of degree $>1$. Set $G:=\mathrm{Gal}(\prod_{i=1}^r (f_i(X)-t)/K(t))$, and for each $i\in \{1,\dots, r\}$, let $U_i$ be the stabilizer of a root of $f_i(X)-t$ in $G$. The following are equivalent:
\begin{itemize}
\item[1)] The set of fake values of $\{f_1,\dots, f_r\}$ is not a thin subset of $K$ (in the sense of Serre)\footnote{Here, a subset $S\subseteq \mathbb{P}^1_K$ is called thin, if it is contained in the union of a finite set and finitely many value sets of nontrivial branched coverings $\varphi_i: X_i\to \mathbb{P}^1_K$ of curves over $K$.}.
\item[2)] All 
elements of $K$ are pseudo-values of $\{f_1,\dots, f_r\}$, i.e., the set $\{f_1,\dots, f_r\}$ locally represents $K$.
\item[3)] For every finite extension $F\supseteq K$, the set $\{f_1,\dots, f_r\}$ locally represents $F$.
\item[4)] Every element of $G$ fixes a point in its action on the roots.
\item[5)] $U_1,\dots, U_r$ yield a normal covering of $G$.
\end{itemize}
\end{lemma}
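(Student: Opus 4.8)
The plan is to prove Lemma~\ref{lem:transl} by establishing the chain of implications
$$5) \Leftrightarrow 4) \Rightarrow 3) \Rightarrow 2) \Rightarrow 1) \Rightarrow 4),$$
where the equivalence $4)\Leftrightarrow 5)$ is essentially definitional and the arithmetic content lives in the Chebotarev-type translations. First I would dispense with $4)\Leftrightarrow 5)$: the action of $G$ on the roots of $\prod_i(f_i(X)-t)$ is the disjoint union of the actions on the cosets $G/U_i$, so an element $g\in G$ fixes a point if and only if it lies in some conjugate $U_i^x$ (its fixed points on $G/U_i$ correspond to cosets $xU_i$ with $g\in xU_ix^{-1}$). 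Hence ``every $g\in G$ fixes a point'' is verbatim the statement that $\{U_1,\dots,U_r\}$ is a normal covering, giving $4)\Leftrightarrow 5)$.

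The central step is to tie condition $4)$ to the local behavior at primes. Fix $t_0\in K$ and a prime $\p$ of $K$ with good reduction (i.e.\ outside a finite bad set $S$). The key observation is that $t_0$ is a $K_\p$-value of some $f_i$ precisely when $f_i(X)-t_0$ has a root in $K_\p$, and by Hensel's lemma together with the standard specialization correspondence, for almost all $\p$ this is controlled by the Frobenius conjugacy class $\mathrm{Frob}_\p$ acting on the roots of $\prod_i(f_i(X)-t)$ after specializing $t\mapsto t_0$: namely $f_i(X)-t_0$ has a $K_\p$-root iff the Frobenius fixes a point in the action on $G/U_i$. Thus $t_0$ is a pseudo-value of $\{f_1,\dots,f_r\}$ iff $\mathrm{Frob}_\p$ fixes some root for almost all $\p$, and by Chebotarev's density theorem every element of $G$ arises as such a Frobenius (for the specialized field, for generic $t_0$) --- so this holds for all $t_0$ iff every element of $G$ fixes a point, i.e.\ condition $4)$. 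This simultaneously yields $4)\Rightarrow 3)$ (the argument is insensitive to replacing $K$ by a finite extension $F$, since $G$ can only shrink and the fixed-point property is inherited) and $2)\Rightarrow 1)\Rightarrow 4)$ via the contrapositive: if some $g_0\in G$ fixes no root, then by Chebotarev the set of $t_0$ whose associated Frobenius hits the class of $g_0$ at a positive density of primes is non-thin, and for each such $t_0$ one finds infinitely many $\p$ with no $K_\p$-value, so $t_0$ is not even a pseudo-value --- making the fake-value set empty, hence thin, and contradicting $1)$.

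The implications $3)\Rightarrow 2)$ (take $F=K$) and $2)\Rightarrow 1)$ (if all of $K$ consists of pseudo-values, the fake values are the pseudo-values minus the actual $K$-values of the $f_i$, and the latter form a thin set since each $f_i$ has degree $>1$, so by Hilbert irreducibility the fake values are co-thin, in particular non-thin) are comparatively soft. The main obstacle I anticipate is the careful bookkeeping in the specialization step $1)\Rightarrow 4)$: one must ensure that for a non-thin set of $t_0$ the splitting field of the specialized polynomial genuinely realizes the full group $G$ with the correct Frobenius distribution, which requires invoking Hilbert's irreducibility theorem to guarantee that the specialized Galois group equals $G$ for $t_0$ outside a thin set, and then combining this with Chebotarev \emph{uniformly} enough to locate a $t_0$ failing the pseudo-value property at infinitely many primes. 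Keeping track of the finitely many excluded primes (ramification, bad reduction, denominators) so that they do not interfere with the density argument is where the technical weight sits; the group-theoretic equivalences $4)\Leftrightarrow 5)$ and the inheritance under field extension are routine by comparison.
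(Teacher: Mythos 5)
Your chain of implications matches the paper's, and the soft steps ($5)\Leftrightarrow 4)$, $3)\Rightarrow 2)$, $2)\Rightarrow 1)$, and $1)\Rightarrow 4)$ via Hilbert irreducibility plus Chebotarev) are all essentially correct, modulo one slip of phrasing: in $1)\Rightarrow 4)$ what you need is that the set of \emph{pseudo-values} is contained in the thin exceptional set of Hilbert's irreducibility theorem (so that the fake values, being a subset, form a thin set); your formulation ``the non-pseudo-values form a non-thin set, hence the fake-value set is empty/thin'' does not follow as stated, since the complement of a non-thin set need not be thin, but the intended HIT argument (which you name in your last paragraph) repairs this.

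The genuine gap is in $4)\Rightarrow 3)$ (equivalently $4)\Rightarrow 2)$): your specialization argument -- ``$f_i(X)-t_0$ has a $K_{\mathfrak{p}}$-root iff $\mathrm{Frob}_{\mathfrak{p}}$ fixes a point in the action on $G/U_i$'' -- is only valid when $t_0$ is \emph{not} a critical value (branch point) of any $f_i$, because only then is $\prod_i(f_i(X)-t_0)$ separable and its Galois group a subgroup of $G$ acting compatibly on the roots. Condition $2)$ demands that \emph{every} $t_0\in K$ be a pseudo-value, including the finitely many critical values, and these cannot be discarded: for such $t_0$ the specialized polynomial has multiple roots, its Galois group is not a subgroup of $G$ but a subquotient $D/I$ (decomposition modulo inertia at a place over $t\mapsto t_0$) acting on the $I$-orbits of roots, and the fixed-point property of $G$ does not formally pass to this action. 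This is precisely where the paper's proof does most of its work: it factors the numerator of $f_i(X)-t_0$ as $\prod_j g_{i,j}(X)^{e_{i,j}}$, identifies $\mathrm{Gal}(\prod_i(f_i(X)-t_0)/F)$ with $D/I$, and for a prime $\mathfrak{p}$ at which $\prod_{i,j} g_{i,j}$ stays separable lifts a generator of the cyclic decomposition subgroup of $D/I$ to an element $\tau\in D\le G$; by $4)$, $\tau$ fixes a root, hence stabilizes that root's $I$-orbit, hence its image in $D/I$ fixes a root of some $g_{i,j}$, giving the $F_{\mathfrak{p}}$-value. (That critical values are a real issue and not a technicality is illustrated by $f(X)=(X^2-2)^2$ over $\mathbb{Q}$: the critical value $t_0=0$ is not a $\mathbb{Q}$-value, and $f(X)$ has no root in $\mathbb{Q}_p$ for the infinitely many $p$ with $2$ a non-residue, so whether such points are pseudo-values is exactly a question about inertia.) Incidentally, you flag the wrong step as the main obstacle: the bookkeeping in $1)\Rightarrow 4)$ is a few lines in the paper, while the inertia/decomposition analysis you omitted is the technical heart of the proof.
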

\begin{proof}
Of course, 3) implies 2). Furthermore, since the union of the sets of $K$-values of finitely many functions of degree $>1$ is thin by definition, 2) implies 1).

Next, we will show 1)$\Rightarrow 4)$. Assume therefore that 4) does not hold, and choose any $t_0\in K$ such that $\mathrm{Gal}(\prod_{i=1}^r (f_i(X)-t_0)/K) = G$. The set of these $t_0$ is the complement of a thin subset by Hilbert's irreducibility theorem. Denote the respective $G$-extension of $K$ by $F/K$. Choose a conjugacy class of elements $x\in G$ fixing no root. There are infinitely many primes unramified in $F/K$ and possessing this class as the Frobenius class in $F/K$, and in particular $f_i(X)=t_0$ then has no $K_p$-rational solution for any $i=1,\dots, r$. This shows that $t_0$ is not a pseudo-value of $\{f_1,\dots, f_r\}$. Hence the set of pseudo-values, and a fortiori the set of fake values, is a thin set. We have thus derived that 1) implies 4).\\
Now, we will show 4)$\Rightarrow $3). Let $F/K$ be a finite extension, and choose $t_0\in F$. We have to show that, for almost all primes $p$ of $F$, $t_0$ is an $F_p$ value of some $f_i$. Assume first that $t_0$ is not a critical value of any $f_i$.  In this case, $\mathrm{Gal}(\prod_{i=1}^r (f_i(X)-t_0)/F)$ is permutation-isomorphic to a subgroup of $\mathrm{Gal}(\prod_{i=1}^r (f_i(X)-t)/F(t))$, which itself is permutation-isomorphic to a subgroup of $G$. Now let $p$ be a prime of $F$  unramified in the splitting field of $\prod_{i=1}^r (f_i(X)-t_0)$. The Frobenius class at $p$ in this splitting field consists of elements of $G$, which by assumption have a fixed point. This implies that at least one factor $f_i(X)-t_0$ has a root in $F_p$, i.e., $t_0$ is an $F_p$-value of some $f_i$.
Next, assume that there exists some $i\in \{1,\dots, r\}$ such that $t_0$ is a critical value of $f_i$. We may assume $\infty\notin f_i^{-1}(t_0)$, since otherwise $t_0$ is even an $F$-value of $f_i$. Then the numerator of $f_i(X)-t_0$ factors 
as $\prod_{j=1}^{d_i}g_{i,j}(X)^{e_{i,j}}$ for suitable (pairwise coprime) irreducible polynomials $g_{i,j}(X)\in F[X]$ ($i=1,\dots, r$). Here, as mentioned in Section \ref{sec:ratfcts}, the $e_{i,j}$ are the ramification indices of places extending $t\mapsto t_0$ in a root field of $f_i(X)-t$.
Furthermore, the roots of a given irreducible $g_{i,j}$ correspond to all the orbits of the inertia group $I$ at (a fixed place extending) $t\mapsto t_0$ joined into a common orbit of the decomposition group $D$ at $t\mapsto t_0$, cf., e.g., \cite[Theorem I.9.1]{MM}. 
This yields an identification of the Galois group $\mathrm{Gal}(\prod_{i=1}^r (f_i(X)-t_0)/F)$ with the quotient group $D/I$. Now let $p$ be any prime of $F$ such that the product of all $g_{i,j}$ remains separable modulo $p$. Then $p$ is unramified in the splitting field of $\prod_{i=1}^r (f_i(X)-t_0)$, and its decomposition group at $p$ is naturally identified with a cyclic subgroup of $D/I$. Let $\tau$ be any generator of an extension of this subgroup to a cyclic subgroup of $D$. Since $\tau$ is assumed to fix a point, its projection to $D/I$ certainly fixes an orbit of $I$, implying that the decomposition group at $p$ fixes a root of at least one $g_{i,j}(X)$. This translates to $t_0$ being an $F_p$-value of the respective rational function $f_i$, completing the implication 4)$\Rightarrow$ 3). 

Finally, the equivalence of 4) and 5) is straightforward upon noting that the orbits of $G$ are exactly the coset spaces $G/U_i$, $i=1,\dots, r$; cf.\  Section \ref{sec:intersect}.\end{proof}

The group-theoretical translation in Lemma \ref{lem:transl} has several immediate, but noteworthy consequences. In particular, the property to locally represent a number field is compatible with composition of functions, thus further emphasizing the special role of indecomposable functions in Question \ref{ques:2}.

\begin{corollary}
\label{cor:transl2}
If $\{f_1,\dots, f_r\}$ and $\{g_1,\dots, g_s\}$ each locally represent $K$, then so does $\{f_i\circ g_j\mid 1\le i\le r; 1\le j\le s\}$. 
\end{corollary}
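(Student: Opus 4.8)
The plan is to bypass the arithmetic definition and argue entirely through the group-theoretic criterion of Lemma~\ref{lem:transl}. A purely local unwinding of the definition does not work directly: given $t_0\in K$ and a prime $p$, the hypothesis on $\{f_1,\dots,f_r\}$ produces some $\beta\in K_p\cup\{\infty\}$ with $f_i(\beta)=t_0$, but $\beta$ lies in $K_p$ rather than in $K$, so it cannot simply be fed into the hypothesis on $\{g_1,\dots,g_s\}$, which only controls \emph{global} values. The remedy is to pass to the generic situation, where the intermediate value $\beta$ becomes transcendental and the hypothesis on the $g_j$ applies verbatim.

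Concretely, write $M:=\mathrm{Split}(\prod_{i=1}^r(f_i(Y)-t)/K(t))$ with Galois group $G_f:=\mathrm{Gal}(M/K(t))$, and let $N\supseteq M$ be the splitting field of $\prod_{i,j}(f_i(g_j(X))-t)$ over $K(t)$, with $H:=\mathrm{Gal}(N/K(t))$ acting on the set $\Omega$ of roots of $\prod_{i,j}(f_i(g_j(X))-t)$. Since the group attached to $\{f_i\circ g_j\}$ in Lemma~\ref{lem:transl} is exactly $H$ acting on $\Omega$, by the equivalence of 2) and 4) in that lemma it suffices to show that every $h\in H$ fixes a point of $\Omega$.

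Fix $h\in H$ and let $\bar h\in G_f$ be its restriction to $M$. Because $\{f_1,\dots,f_r\}$ locally represents $K$, Lemma~\ref{lem:transl} (2)$\Rightarrow$4)) shows that $\bar h$ fixes some root $\beta$ of a factor $f_i(Y)-t$; hence $h$ fixes $\beta$ and therefore the subfield $K(\beta)=K(t)(\beta)\subseteq N$, so that $h\in\mathrm{Gal}(N/K(\beta))$. Here $\beta$ is transcendental over $K$ (otherwise $t=f_i(\beta)$ would be algebraic over $K$), so $K(\beta)$ is a rational function field and the $K$-isomorphism $s\mapsto\beta$ identifies $\mathrm{Gal}(\mathrm{Split}(\prod_{j=1}^s(g_j(X)-\beta)/K(\beta))/K(\beta))$, as a permutation group on the roots of $\prod_j(g_j(X)-\beta)$, with the group $G_g:=\mathrm{Gal}(\prod_j(g_j(X)-s)/K(s))$ attached to $\{g_1,\dots,g_s\}$. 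Every root $\gamma$ of $\prod_j(g_j(X)-\beta)$ satisfies $f_i(g_j(\gamma))=f_i(\beta)=t$, so this splitting field is contained in $N$ and its roots lie in $\Omega$. Restricting $h$ to it yields an element of $G_g$, which by the hypothesis on $\{g_1,\dots,g_s\}$ and Lemma~\ref{lem:transl} fixes some root $\gamma_0$. Then $h(\gamma_0)=\gamma_0$ with $\gamma_0\in\Omega$, as required; applying Lemma~\ref{lem:transl} (4)$\Rightarrow$2)) to $H$ on $\Omega$ completes the argument.

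The main obstacle, and the only step requiring genuine care, is the identification of $\mathrm{Gal}(\cdots/K(\beta))$ with $G_g$ via the transcendence of $\beta$; this genericity is precisely what repairs the local-versus-global mismatch that defeats the naive arithmetic approach. The remaining bookkeeping, namely that $M$ and the splitting field of $\prod_j(g_j(X)-\beta)$ are Galois subextensions of $N$ so that the two restriction maps are well defined, is routine, as is the reduction to the case where all $f_i$ and $g_j$ have degree $>1$.
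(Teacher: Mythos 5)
Your proof is correct and follows essentially the same route as the paper's own argument: both pass to the generic Galois group via the equivalence 2)$\Leftrightarrow$4) of Lemma~\ref{lem:transl}, use the hypothesis on $\{f_1,\dots,f_r\}$ to find a fixed root $\beta$ of some $f_i(X)-t$, and then apply the hypothesis on $\{g_1,\dots,g_s\}$ over $K(\beta)$ to find a fixed root of some $g_j(X)-\beta$, which is a root of $f_i(g_j(X))-t$. The only difference is that you spell out the identification of $\mathrm{Gal}\bigl(\prod_j(g_j(X)-\beta)/K(\beta)\bigr)$ with the generic group $G_g$ via the transcendence of $\beta$, a step the paper leaves implicit.
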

\begin{proof}
Let $L\supseteq K(t)$ be the splitting field of $\prod_{i,j} (f_i(g_j(X))-t)$, and let $\sigma\in \mathrm{Gal}(L/K(t))$. Since $L$ in particular contains the splitting field of $\prod_i (f_i(X)-t)$ and $\{f_1,\dots, f_r\}$ is locally representing, the equivalence 2)$\Leftrightarrow$4) of Lemma \ref{lem:transl} implies that $\sigma$ fixes a root $y$ of some $f_i(X)-t$; in particular $\sigma\in \mathrm{Gal}(L/K(y))$. But since $g_j(x)=y$ implies $f_i(g_j(x))=t$, $L$ contains the splitting field of $\prod_{j}(g_j(X)-y)$ over $K(y)$; and since $\{g_1,\dots, g_s\}$ is also locally representing, this means that $\sigma$ fixes a root $x$ of some $g_j(X)-y$. Then $x$ is a root of $f_i(g_j(X))-t$, and the assertion follows from the equivalence 2)$\Leftrightarrow$4) in Lemma \ref{lem:transl}.
\end{proof}

Finally, the following observations show that it does not make sense to replace  ``rational functions" by ``polynomials" (see already (4.2) of \cite{Fried74}, which asserts that  a set of polynomials attaining every residue class mod $p$ for almost all primes $p$ of $K$ must contain a linear polynomial), or to restrict to the case $r=1$ in Question \ref{ques:2}.
\begin{corollary}
\label{cor:transl3}
\begin{itemize}
\item[a)]
If $f_1,\dots, f_r\in K[X]$ are polynomials of degree $>1$, then $f_1,\dots, f_r$ do not locally represent $K$.
\item[b)] If $f_1\in K(X)$ is a single rational function of degree $>1$, then $f_1$ does not locally represent $K$.
\end{itemize}
\end{corollary}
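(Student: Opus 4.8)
The plan is to deduce both parts from the group-theoretical characterization in Lemma \ref{lem:transl}, specifically the equivalence $2)\Leftrightarrow 4)$, which reduces ``locally representing'' to the statement that every element of the relevant monodromy group fixes a point in its action on the roots. The entire content of Corollary \ref{cor:transl3} is therefore the assertion that the monodromy groups arising in these two situations always contain a \emph{fixed-point-free} element. The guiding principle is the classical fact (already invoked in the proof of Lemma \ref{lem:transl}, via \cite{DM} and the discussion in Section \ref{sec:intersect}) that every finite \emph{transitive} permutation group on more than one point possesses a fixed-point-free element; this is the Jordan/Cameron--Cohen observation. The whole task is thus to arrange matters so that transitivity (on the full root set, or on a suitable subset) forces the existence of such an element.

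For part b), $r=1$, the group $G=\mathrm{Mon}(f_1)=\mathrm{Gal}(f_1(X)-t/K(t))$ acts \emph{transitively} on the $\deg(f_1)>1$ roots of $f_1(X)-t$, precisely because the numerator $g_1(X)-th_1(X)$ is irreducible over $K(t)$ (as $f_1$ has degree $>1$ and $t$ is transcendental). A nontrivial finite transitive permutation group always contains a fixed-point-free element by Jordan's theorem. Hence condition $4)$ of Lemma \ref{lem:transl} fails, and so $f_1$ does not locally represent $K$. This is the entire argument; the only thing to state carefully is the irreducibility of $f_1(X)-t$ over $K(t)$, which is standard.

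For part a), the functions are polynomials $f_i\in K[X]$ of degree $>1$, and the new input is that $\infty$ behaves specially: each $f_i$ is totally ramified over $\infty$, so $\infty$ is the unique point of $\mathbb{P}^1$ lying above $t=\infty$ under $f_i$. The clean way to exploit this is to work with the inertia group $I_\infty$ at the branch point $t=\infty$ in the splitting field $\mathrm{Split}(\prod_i(f_i(X)-t)/K(t))$. Because each $f_i$ is a polynomial, totally ramified over $\infty$, an inertia generator $\sigma_\infty$ at $t=\infty$ acts as a single $\deg(f_i)$-cycle on the roots of each factor $f_i(X)-t$, and in particular $\sigma_\infty$ is fixed-point-free on the \emph{entire} root set (every cycle has length $\deg(f_i)>1$). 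Thus $G$ contains the fixed-point-free element $\sigma_\infty$, condition $4)$ of Lemma \ref{lem:transl} again fails, and $\{f_1,\dots,f_r\}$ cannot locally represent $K$. Equivalently, one may invoke (4.2) of \cite{Fried74} as cited, but the inertia-at-infinity argument is self-contained.

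The main obstacle, such as it is, lies entirely in part a): one must justify that a generator of inertia over $t=\infty$ acts as a full-length cycle on the roots of each polynomial factor, i.e.\ that $\infty$ is totally ramified. For a \emph{polynomial} $f_i$ of degree $n_i$ this is immediate, since $f_i^{-1}(\infty)=\{\infty\}$ with ramification index $n_i$; hence the corresponding branch cycle is an $n_i$-cycle with no fixed points. Once this is in hand there is genuinely nothing left to prove, since the product of these cycles over the various factors is still fixed-point-free. I would present part b) first (as it is the purely transitive case) and then part a), noting that part a) is in fact the more informative statement, as it shows the obstruction persists for arbitrarily many polynomials of degree $>1$, in contrast to the rational-function examples of the introduction.
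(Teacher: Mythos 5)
Your proposal is correct and is essentially the paper's own proof: part b) via transitivity of the monodromy group (irreducibility of $f_1(X)-t$ over $K(t)$) together with Jordan's theorem on fixed-point-free elements, and part a) via the inertia generator at $t\mapsto\infty$, which acts as a full cycle on the roots of each polynomial factor because polynomials are totally ramified over $\infty$, hence is fixed point free; both parts then conclude through the equivalence 2)$\Leftrightarrow$4) of Lemma \ref{lem:transl}. The only difference is expository (you spell out the total-ramification justification and the ordering of the parts), not mathematical.
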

\begin{proof}
The inertia group generator at $t\mapsto \infty$ in the joint splitting field of $f_i(X)-t$ ($i=1,\dots r$) acts as a full cycle on each set of roots, and hence in particular is fixed point free. This shows a). In the situation of b), the Galois group acts transitively, and it is then well-known that it must contain a fixed point free element.
\end{proof}

\section{Proof of Theorem \ref{thm:main} and Theorem \ref{thm:sym}} 
\label{sec:mainproofs}
 
%\marginpar{JK: When should this be introduced? E.g., part of notation is already used from Thm.1.1. onwards. BK: I have put it before Thm 1.1}

To make use of the results of Section \ref{sec:largemon}, involving primitive permutation groups with a unique minimal normal subgroup, we establish a technical lemma. To state it, for a given collection $\mathcal{C}$ of finite simple groups, we call a finite Galois extension $F/K$ a $\mathcal{C}$-extension, if all (Jordan-H\"older) composition factors of its Galois group are contained in $\mathcal{C}$. Note that, since composita of $\mathcal{C}$-extensions are again $\mathcal{C}$-extensions, any finite Galois extension contains a unique maximal $\mathcal{C}$-subextension.

\begin{lemma} \label{nonSolvableMess}
Let $\kappa$ be a field of characteristic zero and $\mathcal{C}$ a collection of finite simple groups.
Let $\Omega_{1},\Omega_{2},\dots, \Omega_{r}$ be finite Galois extensions of $\kappa$ with Galois groups $G_{i}:=\mathrm{Gal}(\Omega_{i}/\kappa)$, $i=1,2,\dots, r$. Assume that every $G_{i}$ has a \textit{unique} minimal normal subgroup $N_{i} \cong H_{i}^{m_{i}}$ for some simple group $H_{i} \not \in \mathcal{C}$, and that all composition factors of $G_i/N_i$ lie in $\mathcal{C}$. Let $\Omega:=\Omega_{1}\Omega_{2}\cdots \Omega_{r}$.
Let $M/\kappa$ be a finite $\mathcal{C}$-extension containing the maximal $\mathcal{C}$-subextension of $\Omega/\kappa$. 
 Then the following holds:
\begin{itemize}
\item[(1)] For all $i\in \{1,\dots, r\}$, it holds that $\mathrm{Gal}(\Omega_{i}M/M) \cong N_{i}$.
\item[(2)] For all $i,j\in \{1,\dots, r\}$, it holds that $\Omega_{i} = \Omega_{j}$ or $\Omega_i M \cap \Omega_j M=M$.
%If $E_{i}=E_{j}$, then $\Omega_{i}=\Omega_{j}$.
\end{itemize}
If additionally all of $H_1,\dots, H_r$ are nonabelian, then the following holds:
\begin{itemize}
%\item[(3)] 
%\marginpar{Why do we state 3) in this form? I know this is used in the proof, but the only thing we cite later is 4), while the statement quoted for the proof of 4) itself is only the intersection property of fields, not the Galois group version. If only field version was stated, then 2) could also be dropped from the statement, being merely the induction base for 3). In fact, if we're being petty, the current wording of (3), with abstract isomorphism, might be problematic for the induction. Currently trying for rewording with only three assertions.}
%Let $1 \leq i_{1} < i_{2} < \cdots < i_{s} \leq r$ be a minimal sequence such that for any $j \leq r$ it holds that $\Omega_{j}=\Omega_{i_{m}}$ for some $m \leq s$. Then $\text{Gal}(E_{1}E_{2}\cdots E_{r}/M) \cong N_{i_{1}}\times N_{i_{2}}\times \cdots \times N_{i_{s}}$.
\item[(3)] 
Assume that $i\in \{1,\dots, r\}$ is such that $\Omega_{i} \neq \Omega_{j}$ for all $i\ne j\in \{1,\dots, r\}$, and let $E$ be the compositum of $M$ and all $E_j$, $i\ne j\in \{1,\dots, r\}$. Then the restriction map induces an isomorphism
\[\mathrm{Gal}(\Omega_{i}E/E) \cong \mathrm{Gal}(\Omega_iM/M) \cong N_{i}.
\]
\end{itemize}
\end{lemma}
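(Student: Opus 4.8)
The plan is to deduce part (3) from parts (1) and (2) together with the structure theory of subdirect products of nonabelian simple groups. Reading $E$ as the compositum of $M$ with all $\Omega_j$, $j\neq i$, I would first reduce the claim to the single statement that $\Omega_i M\cap E=M$. Indeed, since $\Omega_i M/M$ is Galois and $M\subseteq E$, the standard formula for Galois groups of composita gives $\Gal(\Omega_i E/E)=\Gal(\Omega_i M\cdot E/E)\cong \Gal(\Omega_i M/(\Omega_i M\cap E))$, using $\Omega_i M\cdot E=\Omega_i E$. As part (1) identifies $\Gal(\Omega_i M/M)\cong N_i$, the equality $\Omega_i M\cap E=M$ immediately yields the desired isomorphism, visibly induced by restriction.

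Next I would pass to the group side over the base $M$. Write $\tilde\Omega_j:=\Omega_j M$, so that $\Gal(\tilde\Omega_j/M)\cong N_j\cong H_j^{m_j}$ by part (1), and set $\Gamma:=\Gal(\Omega M/M)$, where $\Omega M=\prod_j\tilde\Omega_j$. Restriction embeds $\Gamma$ into $\prod_j N_j$ as a subdirect product, and since each $N_j$ is the full direct product of its simple factors, $\Gamma$ is in fact a subdirect product of the family of nonabelian simple groups $\{H_j^{(l)}:1\le j\le r,\ 1\le l\le m_j\}$; here the hypothesis that all $H_j$ are nonabelian is essential. I would then invoke the structure theorem for subdirect products of nonabelian simple groups: $\Gamma$ decomposes as a direct product $\prod_B\Delta_B$ of full diagonal subgroups over a partition of the index set $\{(j,l)\}$ into blocks, with the property that for any index subset $I$ the projection of $\Gamma$ onto $\prod_{(j,l)\in I}H_j^{(l)}$ is surjective if and only if $I$ meets every block in at most one point.

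The heart of the argument is to show, using part (2), that no block links a simple factor of $N_i$ to a simple factor of $N_j$ for $j\neq i$. By hypothesis $\Omega_i\neq\Omega_j$ for every $j\neq i$, so part (2) gives $\tilde\Omega_i\cap\tilde\Omega_j=M$ and hence $\Gal(\tilde\Omega_i\tilde\Omega_j/M)\cong N_i\times N_j$; that is, $\Gamma$ projects onto the full product of all simple factors of $N_i$ and $N_j$. By the projection criterion the index set $\{(i,l)\}_l\cup\{(j,l')\}_{l'}$ therefore meets each block in at most one point, so no block contains both a factor $(i,l)$ and a factor $(j,l')$ with $j\neq i$. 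As this holds for every $j\neq i$, and as the surjection $\Gamma\to N_i$ already forces the factors $(i,l)$ to lie in distinct blocks, every block meeting the $N_i$-coordinates must be a singleton $\{(i,l)\}$. These singletons then split off as a direct factor: $\Gamma= N_i\times\Gamma'$, where $\Gamma'\le\prod_{j\neq i}N_j$ is precisely the restriction image $\Gal(E/M)$. Being compatible with the restriction maps to $\tilde\Omega_i$ and to $E$, this decomposition says exactly that $\tilde\Omega_i$ and $E$ are linearly disjoint over $M$, i.e.\ $\Omega_i M\cap E=M$, which completes the proof via the reduction above.

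The main obstacle is exactly this last step. A naive argument comparing composition factors only shows that if $\Omega_i M\cap E\neq M$, then $H_i$ occurs as a composition factor of $\Gal(E/M)$, whence $H_i\cong H_j$ for some $j\neq i$ — which is not in itself contradictory, since distinct $\Omega_j$ may well have isomorphic socle factors. Ruling out the resulting ``diagonal'' coincidences is what forces the use of the subdirect product structure theorem in tandem with the full-product conclusion of part (2), and this is where the nonabelian hypothesis on the $H_j$ (as opposed to merely $H_j\notin\mathcal{C}$) is used in an essential way.
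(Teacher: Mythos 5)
Your proposal is correct, and it proves the lemma by a genuinely different mechanism in the key step, even though the initial reduction is the same as the paper's: both arguments boil part (3) down to the linear disjointness statement $\Omega_i M\cap E=M$, from which the conclusion follows by standard Galois theory and part (1). The paper establishes this disjointness by induction on $r$ (after assuming, WLOG, that the $\Omega_j$ are pairwise distinct): if $\Omega_1 M\cap E\supsetneq M$, then $\mathrm{Gal}(\Omega_1/\Omega_1\cap E)$ is a normal subgroup of $G_1$ properly contained in $N_1$, hence trivial by the \emph{unique minimal normal subgroup} hypothesis, forcing $\Omega_1\subseteq E$; then $N_1$ would be a quotient of $N_2\times\cdots\times N_r$, and since normal subgroups of a product of nonabelian simple groups are kernels of coordinate projections, the kernel would have proper image in some $N_j$, contradicting the surjectivity onto $N_j$ guaranteed by the pairwise disjointness of part (2). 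You instead argue non-inductively: you view $\Gamma=\mathrm{Gal}(\Omega M/M)$ as a subdirect product of the nonabelian simple factors $H_j^{(l)}$ and invoke the block-diagonal structure theorem for such subdirect products, with part (2) entering through the projection criterion (surjectivity of $\Gamma$ onto $N_i\times N_j$ forces every block to meet the $N_i$-coordinates in at most one point), so that the $N_i$-blocks are singletons and $\Gamma=N_i\times\Gamma'$ splits off. Your route works entirely over $M$ and never needs to revisit the groups $G_i$ or their unique minimal normal subgroups within the proof of (3) (that hypothesis is consumed only through parts (1) and (2)); it also handles possible coincidences among the $\Omega_j$ with $j\ne i$ without the paper's preliminary deletion step. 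What it costs is reliance on the subdirect-product structure theorem as a black box, where the paper's induction uses only the more elementary fact about normal subgroups of direct products of nonabelian simple groups; the paper's argument also yields, as a by-product, the full decomposition $\mathrm{Gal}(\Omega M/M)\cong N_1\times\cdots\times N_r$ when all $\Omega_j$ are distinct, whereas yours produces only the splitting relative to the distinguished index $i$ (which is all the lemma requires). Your closing remark is also on point: a composition-factor count alone cannot rule out diagonal coincidences between distinct $\Omega_j$ with isomorphic socle factors, and this is precisely where both proofs need nonabelian simplicity together with part (2).
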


\begin{proof}
Let us set $E_i:=\Omega_iM$.\\
We first show 1). For $i\in \{1,\dots, r\}$, let $K_i\subset \Omega_i$ be the fixed field of $N_i$. By definition of $M$, we have $K_i\subseteq M$, but $\Omega_i\not\subseteq M$, and hence $M\cap \Omega_i = K_i$, since there is no normal extension of $\kappa$ properly contained between $K_i$ and $\Omega_i$. Thus $\mathrm{Gal}(E_i/M)\cong \mathrm{Gal}(\Omega_i/\Omega_i\cap M) = \mathrm{Gal}(\Omega_i/K_i)=N_i$, showing 1).

We proceed with 2). Similarly as in 1), $\mathrm{Gal}(E_i/E_i\cap E_j) \cong \mathrm{Gal}(\Omega_i/(E_i\cap E_j)\cap \Omega_i)$, and since $E_j\cap E_j$ is a Galois extension of $\kappa$ containing $M$, one has that $(E_i\cap E_j)\cap \Omega_i$ is a Galois intermediate extension of $\Omega_i/\kappa$ containing $K_i$, i.e., equals $K_i$ or $\Omega_i$. In the first case, $\mathrm{Gal}(E_i/E_i\cap E_j) = N_i =\mathrm{Gal}(E_i/M)$, yielding $E_i\cap E_j=M$, while in the second case clearly $E_i=E_i\cap E_j \subseteq E_j$. By symmetry, we may thus from now on assume
that $E_{i}=E_{j}$. 

Let $\Omega_{ij}:=\Omega_{i}\Omega_{j}$, then $\Omega_{i}M=\Omega_{j}M=\Omega_{ij}M$. Notice that under this assumption we have
\begin{equation}
	\mathrm{Gal}(\Omega_{ij}M/\Omega_{i}) = \mathrm{Gal}(\Omega_{i}M/\Omega_{i}) \cong \mathrm{Gal}(M/\Omega_{i} \cap M). \label{smallFactors}
\end{equation}
On the other hand, we also have an epimorphism
\begin{equation}
	\mathrm{Gal}(\Omega_{ij}M/\Omega_{i}) \twoheadrightarrow \mathrm{Gal}(\Omega_{ij}/\Omega_{i}) \cong \mathrm{Gal}(\Omega_{j}/\Omega_{i} \cap \Omega_{j}). \label{largeFactors}
\end{equation}
From (\ref{smallFactors}) we infer that all composition factors of $\mathrm{Gal}(\Omega_{ij}M/\Omega_{i})$ belong to $\mathcal{C}$. Thus, from the projection in (\ref{largeFactors}) the same holds for the composition factors of $\mathrm{Gal}(\Omega_{j}/\Omega_{i}\cap \Omega_{j}) \trianglelefteq G_j$. But by assumption, every nontrivial normal subgroup of $G_j$ has a composition factor outside of $\mathcal{C}$. It follows that $\Omega_{i}\cap \Omega_{j}=\Omega_j$, i.e., $\Omega_j\subseteq \Omega_i$. 
Likewise it follows that $\Omega_{i} \subseteq \Omega_{j}$, hence the claim.

Next, we prove 3). 
For notational convenience, we assume $i=1$. We may and will furthermore assume that $\Omega_{j}\ne \Omega_{k}$ for all $1\le j\ne k\le r$ (or otherwise, simply delete some of the $\Omega_{j}$, $2\le j\le r$). 
Let $\Gamma:=\mathrm{Gal}(\Omega_{1}\Omega_{2}\cdots \Omega_{r}M/M) = \mathrm{Gal}(E_{1}E/M)$. 
Via the restrictions $\Gamma \to \mathrm{Gal}(E_{j}/M) = N_{j}$ ($j=1,\dots, r$), $\Gamma$ embeds as a subgroup into $N_{1}\times\dots\times N_{r}$. We claim that this embedding is in fact an isomorphism.
From this, the assertion follows, since $\mathrm{Gal}(\Omega_{1}E/E)\le \Gamma$ is exactly the subgroup acting trivially on each $E_{j}$, $j=2,\dots, r$; i.e., the preimage of $N_{1}\times\{1\}\times\dots\times\{1\}$ under the above embedding. 

To prove the claim, we may proceed by induction over $r$, with the base case $r=1$ simply being assertion 1) of this theorem.
So assume inductively that $\mathrm{Gal}(E/M)=\mathrm{Gal}(E_{2}\cdots E_{r}/M)\cong N_{2}\times\dots\times N_{r}$. To derive that $\mathrm{Gal}(E_{1}E/M) \cong \mathrm{Gal}(E_{1}/M)\times \mathrm{Gal}(E/M)$ as claimed, it remains to show that $E$ and $E_{1}$ are linearly disjoint over $M$, i.e., that $E\cap E_{1} = M$. 

Assume otherwise, then $\mathrm{Gal}(E_{1}/E_{1}\cap E)\cong \mathrm{Gal}(E\Omega_{1}/E) \cong\mathrm{Gal}(\Omega_{1}/\Omega_{1}\cap E) \triangleleft G_{1}$ must be a normal subgroup properly contained in $N_{1} = \mathrm{Gal}(\Omega_{1}/K_{1})\cong\mathrm{Gal}(E_{1}/M)$, and hence must be trivial. In other words, $\Omega_{1}\subseteq E$. Therefore, $\mathrm{Gal}(E_{1}/M)$ is a quotient of $\mathrm{Gal}(E/M)$. Since the latter is assumed to be a direct product of {\it nonabelian} simple groups, the only normal subgroups are the kernels of the projections to a subset of the simple direct factors. I.e., $\mathrm{Gal}(E/E_{1})$ would have trivial image on at least one of these factors. Let $j\in \{2,\dots, r\}$ be the index such that this factor belongs to $N_{j}$. In particular, $\mathrm{Gal}(E/E_{1})$ projects to a proper subgroup of $\mathrm{Gal}(E_{j}/M)=N_{j}$. But linear disjointness of $E_{1}$ and $E_{j}$ over $M$, as ensured by assertion 2), means that $\mathrm{Gal}(E_{j}E_{1}/E_{1})$, and hence in particular the image of $\mathrm{Gal}(E/E_{1})$ under restriction to $E_{j}$ is all of $N_{j}$, a contradiction. This concludes the proof.
\end{proof}

From Lemma \ref{nonSolvableMess}, we obtain the following, which translates a ``normal covering" property in a compositum of several Galois extensions (with certain technical assumptions) into a slightly weaker covering property in an individual extension.
In view of Lemma \ref{lem:transl}, this will in particular lead to a strong necessary condition for a set of functions $\{ f_{1},f_{2}, \dots, f_{r} \}$ to locally represent $\mathbb{Q}$ when there is a function $f_{i}$ of large degree and with non-solvable monodromy group.

%\marginpar{With newly included notation, could now word corollary as ``Let $f=f_1\cdots f_r$ be newly intersective such that ..."}

\begin{corollary} \label{cosetFixedpoint}
%\marginpar{JK: I think something like this will eventually be ``group-theoretical enough".}
Let $\mathcal{C}$ be a collection of simple groups. Let $\kappa$ be a field of characteristic zero and let $p_{1},p_{2}, \dots p_{l},p_{l+1}, \dots, p_{r}\in \kappa[X]$ (with $0\le l < r$) be polynomials of degree $\ge 2$.
For $i=1,\dots, r$, let $\Omega_{i}$ be the splitting field of $p_i$ over $\kappa$ and set $G_i:=\mathrm{Gal}(\Omega_i/\kappa)$. Finally, let $\Omega$ be the compositum of all $\Omega_{i}$, $i\in\{1,\dots, r\}$, put $G:=\mathrm{Gal}(\Omega/\kappa)$, and let $U_i< G$ be the stabilizer of a root of $p_i$. Assume that all of the following hold.
\begin{itemize}
\item[i)] for $i \leq l$, all composition factors of $G_i$ belong to $\mathcal{C}$,
\item[ii)] for $l+1 \leq i \leq r$ it holds that $G_i$ %is non-solvable with 
has a unique minimal normal subgroup $N_{i} \cong H_{i}^{m_{i}}$, where $H_{i}$ is non-abelian simple and does not belong to $\mathcal{C}$, and furthermore all composition factors of $G_i/N_i$ belong to $\mathcal{C}$.
\item[iii)] $\{U_1,\dots, U_r\}$ yields a normal covering of $G$, but no proper subset does so.
\end{itemize}
Then for every index $l+1 \leq i_{0} \leq r$, there is a coset $\sigma N_{i_{0}}$ of $G_{i_0}$ such that every element of $\sigma N_{i_{0}}$ fixes a root of some $p_j$ with splitting field $\Omega_{j}$ satisfying $\Omega_{j}=\Omega_{i_{0}}$.
%
%\textbf{JK: Introduction of $\overline{\mathbb{Q}}$ should be superfluous; note that NZ holds over any number field (maybe with slight adaptation in Case(4)) assuming geometric indecomposability, since symmetric normalizer of groups in (1)-(3) cannot be larger!}
\end{corollary}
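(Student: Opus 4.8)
The plan is to fix the index $i_0$ and exhibit one concrete coset $\sigma N_{i_0}$ that works, using the linear-disjointness output of Lemma~\ref{nonSolvableMess}(3) to move freely on $\Omega_{i_0}$ while freezing the Galois action on every other splitting field. First I would set up the auxiliary field over which $\Omega_{i_0}$ becomes ``independent''. Let $M$ be the maximal $\mathcal{C}$-subextension of $\Omega/\kappa$; by hypothesis i) each $\Omega_j$ with $j\le l$ is a $\mathcal{C}$-extension and hence $\Omega_j\subseteq M$. Writing $J:=\{j:\Omega_j=\Omega_{i_0}\}$ and grouping the indices $l+1,\dots,r$ according to the value of their splitting field, I choose a set $R$ of representatives with $i_0\in R$, and put $E$ equal to the compositum of $M$ and all $E_j:=\Omega_j M$ with $j\in R\setminus\{i_0\}$. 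Then $E$ contains every $\Omega_j$ with $\Omega_j\ne\Omega_{i_0}$ (those with $j\le l$ via $M$, those with $j>l$ via the chosen representatives), whereas $E$ does not contain $\Omega_{i_0}$; in particular $\Omega=\Omega_{i_0}E$.

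The key structural input comes next. Applying Lemma~\ref{nonSolvableMess}(3) to the family indexed by $R$ (whose members satisfy exactly the hypotheses of ii), and in which $\Omega_{i_0}$ is distinct from all other members by construction) shows that restriction induces an isomorphism $\mathrm{Gal}(\Omega/E)=\mathrm{Gal}(\Omega_{i_0}E/E)\xrightarrow{\ \sim\ }N_{i_0}$. Concretely, this means every $n\in N_{i_0}$ is realised by some $\nu\in\mathrm{Gal}(\Omega/E)$, i.e.\ by an automorphism acting as $n$ on $\Omega_{i_0}$ and trivially on all $\Omega_j$ with $\Omega_j\ne\Omega_{i_0}$.

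Now I would extract the coset from the minimality hypothesis iii) and verify it. Since $\{U_j:j\ne i_0\}$ is a proper subfamily, it is not a normal covering, so there is $g_0\in G$ fixing no root of any $p_j$ with $j\ne i_0$; writing $\pi_j\colon G\to G_j$ for the restriction maps, I claim $\sigma N_{i_0}$ with $\sigma:=\pi_{i_0}(g_0)$ works. To check this, take any $\tau=\sigma n\in\sigma N_{i_0}$, pick $\nu\in\mathrm{Gal}(\Omega/E)$ restricting to $n$ on $\Omega_{i_0}$, and set $g:=g_0\nu\in G$. As $\pi_{i_0}$ is a homomorphism, $\pi_{i_0}(g)=\sigma n=\tau$, while for every $j$ with $\Omega_j\ne\Omega_{i_0}$ the automorphism $\nu$ fixes $\Omega_j\subseteq E$ pointwise, whence $\pi_j(g)=\pi_j(g_0)$. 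Since $\{U_1,\dots,U_r\}$ is a normal covering, $g$ fixes a root of some $p_{j^*}$. Were $\Omega_{j^*}\ne\Omega_{i_0}$, then $j^*\ne i_0$ and $\pi_{j^*}(g)=\pi_{j^*}(g_0)$, so $g_0$ would fix a root of $p_{j^*}$ — contradicting the choice of $g_0$. Hence $j^*\in J$, and as the roots of $p_{j^*}$ lie in $\Omega_{j^*}=\Omega_{i_0}$, whether $g$ fixes one of them depends only on $\pi_{i_0}(g)=\tau$. Thus $\tau$ fixes a root of $p_{j^*}$ with $\Omega_{j^*}=\Omega_{i_0}$, as required.

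The main obstacle — and the precise reason Lemma~\ref{nonSolvableMess} is invoked rather than a naive independence argument — is the construction of $\nu$: one must prescribe an \emph{arbitrary} element of $N_{i_0}$ on $\Omega_{i_0}$ while \emph{simultaneously} acting trivially on the possibly highly entangled compositum $E$ of all the remaining splitting fields together with the solvable-type field $M$. The only genuine subtlety in the bookkeeping is the passage to representatives $R$ of the coinciding splitting fields, which is exactly what makes part (3) of the lemma applicable with $\Omega_{i_0}$ distinct from the other members; once the isomorphism $\mathrm{Gal}(\Omega/E)\cong N_{i_0}$ is available, the covering-and-minimality argument above is routine.
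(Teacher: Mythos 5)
Your argument follows the same route as the paper's own proof: isolate the class $J=\{j:\Omega_j=\Omega_{i_0}\}$, pass to representatives of the coinciding splitting fields so that Lemma~\ref{nonSolvableMess}(3) applies, use it to identify $\mathrm{Gal}(\Omega/E)$ with $N_{i_0}$ via restriction, pick an element supplied by minimality that avoids the roots of the other polynomials, and translate it by $\mathrm{Gal}(\Omega/E)$ to sweep out a full coset. The paper phrases the last step by observing that every element of $\sigma\,\mathrm{Gal}(\Omega/\Omega_{i_0}^{c}M)$ fixes a root of some $p_j$ with $\Omega_j=\Omega_{i_0}$ and then restricting to $\Omega_{i_0}$; your bookkeeping with $g=g_0\nu$ is the same computation.

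There is, however, one step where your version is strictly more fragile than the paper's and, read literally, has a gap: the choice of $g_0$. You drop only the single index $i_0$ and claim that $\{U_j: j\neq i_0\}$, being a ``proper subfamily'', is not a normal covering. But hypothesis iii) concerns proper subsets of the \emph{set of subgroups} $\{U_1,\dots,U_r\}$, and nothing in the hypotheses forbids $U_j=U_{i_0}$ (as subgroups) for some $j\neq i_0$ --- for instance the same polynomial, with the same chosen root, may occur twice in the list. In that case $\{U_j:j\neq i_0\}$ coincides with $\{U_1,\dots,U_r\}$ as a set of subgroups, it \emph{is} a normal covering, and your $g_0$ simply does not exist: an element avoiding every root of the duplicate $p_j$ lies in no conjugate of $U_j=U_{i_0}$, hence also avoids every root of $p_{i_0}$, contradicting the covering property. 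Note that $U_j=U_{i_0}$ forces equal root fields and hence $\Omega_j=\Omega_{i_0}$, so duplicates can only occur inside $J$; this is precisely why the paper instead removes \emph{all} of $\mathcal{S}_{i_0}=J$ at once. The remaining set $\{U_j:\Omega_j\neq\Omega_{i_0}\}$ can never contain $U_{i_0}$, so its properness --- and with it the existence of an element $\sigma$ avoiding the roots of all $p_j$ with $\Omega_j\neq\Omega_{i_0}$ --- is automatic; and that weaker avoidance property is all that the remainder of the argument (including yours) actually uses. Your proof is repaired either by adopting this choice of subfamily, or by first reducing, without loss of generality, to the case where the $U_j$ are pairwise distinct (deleting duplicates preserves all hypotheses and only strengthens the conclusion).
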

%\marginpar{The statement of this Lemma is ugly.. do you see how to clean it up? \\
%JK: Not so bad IMO, but is the conclusion really ``there exists and index", and not ``for all"?? (Should be doable due to ``minimality") BK: You are right!}
\begin{proof}
Let $i_{0}\ge l+1$ be given and let
\[
\mathcal{S}_{i_{0}}:=\{ j \mid \Omega_{j}=\Omega_{i_{0}} \}.
\]
Furthermore, let
\[
\Omega_{i_{0}}^{c}:=\prod_{j \in \{l+1,l+2,\dots, r  \} \setminus \mathcal{S}_{i_{0}}} \Omega_{j}.
\]
Since the set $\{U_i\mid i\in \{1,\dots, r\}\setminus\mathcal{S}_{i_0}\}$ does not yield a normal covering of $G$, there exists an element $\sigma \in \mathrm{Gal}(\Omega/\kappa)$ that does not fix a root of any $p_j$ with $j\in \{1,\dots, r\} \setminus \mathcal{S}_{i_{0}}$. Let $M \subset \Omega$ be the maximal $\mathcal{C}$-subextension of $\Omega/\kappa$.
Then, a fortiori, any element of $\sigma \mathrm{Gal}(\Omega/ \Omega_{i_{0}}^{c}M)$ fixes a root of some $p_j$ where $j \in \mathcal{S}_{i_{0}}$.

Let $E$ be the compositum of $M$ and $\Omega_{i_{0}}^{c}$, i.e. the compositum of $M$ and all $\Omega_{j} \neq \Omega_{i_{0}}$. By Lemma \ref{nonSolvableMess}(3) (applied with indices $i:=i_0$ and $j\in \{l+1,\dots r\}\setminus\mathcal{S}_{i_0}$), 
it follows
%\marginpar{More explanation?}
that the restriction map yields an isomorphism
\[
\mathrm{Gal}(\Omega/\Omega_{i_{0}}^{c}M)=\mathrm{Gal}(\Omega_{i_{0}}E/E)\cong \mathrm{Gal}(\Omega_{i_{0}}M/M) \cong N_{i_{0}}.
\]
Thus, let $\tau \in N_{i_{0}}$ and let $\tilde{\tau} \in \mathrm{Gal}(\Omega / \Omega_{i_{0}}^{c}M)$ be such that $\tilde{\tau}|_{\Omega_{i_{0}}}=\tau$. Then $\sigma \tilde{\tau}$ fixes a root of $p_j$ for some $j \in \mathcal{S}_{i_{0}}$, and since
\[
    (\sigma \tilde{\tau})|_{\Omega_{i_{0}}} =\sigma|_{\Omega_{i_{0}}} \tau, 
\]
it follows that $\sigma|_{\Omega_{i_{0}}} \tau$ fixes a root of some $p_j$ with $j \in \mathcal{S}_{i_{0}}$. Thus, any element of the coset $\sigma|_{\Omega_{i_{0}}}N_{i_{0}}$ fixes a root of some $p_j$ with $j \in \mathcal{S}_{i_{0}}$.
\end{proof}

\begin{proof}[Proof of Theorem \ref{thm:main}]
Fix a positive integer $r$, and let $\{f_{1},f_{2},\dots, f_{r} \} \subset K(X)$ be a minimal locally representing set of geometrically indecomposable functions with monodromy groups $G_i:=\mathrm{Mon}(f_{i})$. 
For $i=1,2,\dots, r$, let $x_{i}$ be a root of $f_{i}-t$ and let $\Omega_{i}$ be the splitting field of $f_{i}-t$ over $K(t)$, so that $G_{i}=\mathrm{Mon}(f_{i})=\mathrm{Gal}(\Omega_{i}/K(t))$. Let $\Omega=\Omega_{1}\Omega_{2}\cdots \Omega_{r}$, $G=\mathrm{Gal}(\Omega/K(t))$, and let $U_{i}<G$ be the stabilizer of $x_i$. Since $\{f_{1},f_{2},\dots, f_{r} \}$ is a minimal set locally representing $K$, it follows by Lemma \ref{lem:transl} that $\mathcal{U}:=\{U_{1},U_{2},\dots, U_{r} \}$ yields a normal covering of $G$, but none of its proper subsets does. 

Assuming that $G_r$ is nonsolvable and $\deg(f_{r})$ is sufficiently large, Theorem \ref{thm:OldClassification} and Remark \ref{rem:largemon} yield an embedding (of abstract groups)
\[
A_{m}^{t} \leq G_{r} \leq S_{m} \wr S_{t},~t \leq 8,
\]
where $A_{m}$ does not belong to the finite set $\mathcal{S}$ of Remark \ref{rem:largemon}. 
The latter property implies that, up to a reordering of the indices, we may assume the existence of some $1\le l\le r-1$ such that the following hold.

\begin{itemize}
    \item For $i\le l$, the group $G_i$ has no composition factor $A_m$. 
    \item For $l+1\le i\le r$, the group $G_i \le S_m\wr S_{t_i}$ acts in a product action (with $1\le t_i\le 8$).\footnote{I.e., a faithful primitive action arising from the action of $S_m\wr S_{t_i}$ (or possibly, of $A_m\wr S_{t_i}$, in case $G_i$ embeds into that group) on cosets of a suitable subgroup $V_i\wr S_{t_i}$ where $V_i< S_m$.} In particular, $G_i$ has a unique minimal normal subgroup $N_i\cong A_m^{t_i}$, and $G_i/N_i$ has no composition factor $A_m$.
\end{itemize}

By applying Corollary \ref{cosetFixedpoint} with $p_{i}(X)=f_{i}-t$ and with $\mathcal{C}$ the set of all finite simple groups other than $A_{m}$, it follows that there is a coset $\sigma N_{r} \subset G_{r}$ such that every element $\tau \in \sigma N_{r}$ fixes a root of some $f_{j}-t$ with splitting field $\Omega_{j}=\Omega_{r}$.  Since each of the actions on roots is the restriction of some product action of the abstract group $S_m\wr S_{t}$  (or of $A_m\wr S_{t}$, in case $G_r$ embeds into that group) to $G_r$, there are a total of $|\{ j \mid \Omega_{j}=\Omega_{r} \}| \le r$ such actions of $S_m\wr S_{t}$ (resp., of $A_m\wr S_{t}$)  
such that every element of $\sigma N_r$ has a fixed point in at least one of them. Lemma \ref{lem:ProductFixedPointFree} then implies the existence of $\le r$ proper subgroups of $A_{m}$ or of $S_{m}$ whose conjugates cover a full coset of $A_{m}$. By Proposition \ref{prop:groupcover}, this implies $m\le c^{-1}r$ with an absolute constant $c>0$. Thus, $|\Mon(f_r)|$, and with it $\deg(f_r)$, is after all bounded in terms of $r$. This completes the proof.
\end{proof}

\begin{proof}[Proof of Theorem \ref{thm:main}']
%\marginpar{JK: Can probably be done more efficiently without repeating case distinction. Also worth adjusting in such a way to allow a remark saying proof still goes through under weaker assumption that number of genus $0$ actions is absolutely bounded (rather than, is $\le 2$). BK: Personally I don't see the problem with the case distinction. Dosen't Remark 3 follow naturally?}
%\marginpar{Should this also be stated in terms of $\mathcal{S}$ instead?}
We proceed in the same way as in the proof of Theorem \ref{thm:main}. Let $N$ be a constant such that Theorem \ref{NZTheo} applies to rational functions of degree at least $\sqrt{N}$.
Let $\{f_{1},f_{2},\dots, f_{r}\} \in K(X)$ be a minimally locally representing set of geometrically indecomposable rational functions with monodromy groups $G_{i}:=\mathrm{Mon}(f_{i})$. Assume for the sake of a contradiction that $G_{r}$ is non-solvable and $\deg(f_{r}) \geq N$. Denote again by $x_{i}$ a root of $f_{i}-t$, and let $\Omega_{i}$ be the splitting field of $f_{i}-t$ over $K(t)$. By Theorem \ref{NZTheo}, $G_{r}$ has a unique minimal subgroup $N_{r} \cong A_{m}^{t}$ where $t \in \{1,2\}$. Arguing as in the proof of Theorem \ref{thm:main}, we see that there is a coset $\sigma N_{r} \subset G_{r}$ such that every $\tau \in \sigma N_{r}$ fixes a root of some $f_{j}-t$ with splitting field $\Omega_{j}=\Omega_{r}$.
However, we claim that this cannot hold. By Theorem \ref{NZTheo} there are two cases to consider:
\begin{itemize}
\item[Case 1:] $G_{r}=A_{m}$ or $S_{m}$ for some $m \geq \sqrt{N}$. In this case any function $f_{j}$ with $\Omega_{j}=\Omega_{r}$ is of degree $m$ or $m(m-1)/2$. Since we may as well assume $\sqrt{N} \geq 6$, it follows from \cite[Theorem 5.2.A]{DM} that all subgroups of $G_{r}$ of index $m$ are conjugate and that all subgroups of index $m(m-1)/2$ are conjugate. Thus, it follows that a coset of $A_{m}$ in $A_m$ or $S_{m}$ is covered by the conjugates of two proper subgroups, which by Proposition \ref{prop:groupcover} is impossible for sufficiently large values of $N$.
\item[Case 2:]
$A_{m}^{2} \leq G_{r} \leq S_{m}^{2} \rtimes S_{2}$ acting as a product type group, and $\deg(f_{r})=m^{2}\ge N$. 
Theorem \ref{NZTheo} ensures that all the actions of $G_r$ on roots of $f_j-t$, where $j$ is such that $\Omega_j=\Omega_r$, are permutation-isomorphic, and hence have point stabilizers conjugate under $\mathrm{Aut}(G_r)$. By \cite[Lemma 1.1]{Rose}, $\mathrm{Aut}(G_r)$ embeds into $\mathrm{Aut}(A_m^2) = \mathrm{Aut}(A_m)\wr C_2$. But since $m>6$, one has $\mathrm{Aut}(A_m)=S_m$, and hence $\mathrm{Aut}(G_r)\le S_m\wr S_2$. 
%In particular, a point stabilizer $U= (S_{m-1}\wr S_2)\cap G_r$ fulfills $G_r\cdot N_{\mathrm{Aut}(G_r)}(U) = \mathrm{Aut}(G_r)$, whence all conjugates of $U$ under automorphisms of $G_r$ are already conjugate inside $G_r$. Thus,
I.e., $\mathrm{Aut}(G_r)$ is no larger than the symmetric normalizer $N_{S_{m^2}}(G_r)$. It then follows from \cite[Theorem 4.2B]{DM} that all $\mathrm{Aut}(G_r)$-conjugates of $U$ are actually conjugate inside $G_r$, i.e., 
%In this case, the uniqueness (up to conjugacy) of the index $m$ subgroup in $S_{m}$ implies that 
there is only one possible conjugacy class of subgroups for the point stabilizers $\mathrm{Gal}(\Omega_{r}/K(x_{j}))<G_{r}$, where $\Omega_{j}=\Omega_{r}$. 
It then again follows from Lemma \ref{lem:ProductFixedPointFree} that every coset of $N_{r}$ in $G_{r}$ has an element which does not fix a root of any $f_{j}-t$ with $\Omega_{j}=\Omega_{r}$.
\end{itemize}
Thus, in both cases every coset of $N_{r}$ in $G_{r}$ contains an element which is fixed point free in the action on the roots of all $f_{j}(X)-t$ with splitting field $\Omega_{r}$, contradicting that $\{ f_{1},f_{2}, \dots, f_{r} \}$ locally represents $K$. 
\end{proof}

\begin{remark}
The proof of Theorem 1.1' can be carried out analogously under the following weaker assumption, which is an immediate corollary of Theorem \ref{NZTheo}: the number of faithful primitive genus-$0$ actions (i.e., permutation actions in which a suitable tuple of elements becomes a genus-$0$ tuple) of a finite group $G$ is absolutely bounded from above. While, to our knowledge, such a result is not available in the previous literature, its proof should require less intricate means than the full proof of Theorem \ref{NZTheo}. In the special case where the number of branch points of each rational function arising from such a genus-$0$ action is at least $5$, the result becomes an immediate corollary of the main theorem of \cite{GS}.
\end{remark}

We now turn to the proof of Theorem \ref{thm:sym}. The proof uses the same idea as that of Theorem \ref{thm:main}, namely that the Galois group of the product of the $f_{j}(X)-t$ of large degree factors as a direct product.
\begin{proof}[Proof of Theorem \ref{thm:sym}] 
Assume that $\{ f_{1},f_{2}, \dots, f_{l},f_{l+1}, \dots, f_{r} \} \in K(X)$ is a minimally locally representing set of functions, all with alternating or symmetric monodromy group in the standard action, and such that
\begin{itemize}
\item $\deg(f_{i}) \in \{2,3,4,6\}$ for $i \leq l$
\item $\deg(f_{i}) \in \{ 5,7,8, \dots \}$ for $l+1 \leq i \leq r$. 
\end{itemize}
We apply Corollary \ref{cosetFixedpoint} with $\mathcal{C}=\{C_2, C_3, A_6\}$, $\kappa=K(t)$ and $p_i(X)=f_i(X)-t$, $i=1,\dots, r$. 
Let $\Omega_{i}=\mathrm{Split}(f_{i}(X)-t/K(t))$. For $i \leq l$ the composition factors of $\mathrm{Mon}(f_{i})$ are among $C_2$, $C_3$ or $A_{6}$, while for $i \geq l+1$ it holds that $\mathrm{Mon}(f_{i})$ is non-solvable with unique minimal subgroup $A_{\deg(f_{i})}\notin \mathcal{C}$. It follows by Corollary \ref{cosetFixedpoint} that for every index $i_{0}$ with $l+1 \leq i_{0} \leq r$ there is a coset of $A_{\deg(f_{i_{0}})}$ in $\mathrm{Mon}(f_{i_{0}})$ in which every element fixes a root of one of the functions $f_{j}$ satisfying $\Omega_{j}=\Omega_{i_{0}}$. Since $\deg(f_{i}) \neq 6$ for $l+1 \leq i \leq r$ it follows from \cite{DM}, Theorem 5.2A and Theorem 5.2B, that the actions of $\mathrm{Gal}(\Omega_{i_{0}}/K(t))$ on the the roots of $f_{j}(X)-t$ with $\Omega_{j}=\Omega_{i_{0}}$ are all equivalent to the (restriction of the) transitive action of $S_{\deg(f_{i_0})}$ on $\deg(f_{i_0})$ points. It is now easy to see that for $n \geq 5$, both cosets of $A_{n}$ in $S_{n}$ have a fixed point free element, and hence by Corollary \ref{cosetFixedpoint} the set $\{f_{1},f_{2},\dots, f_{l}\}$ is already locally representing, a contradiction.
\end{proof}

The exact same argument, with $\kappa = K$ a number field instead of the function field $K(t)$, yields the following result on intersective polynomials whose factors have ``generic" Galois group.
\begin{theorem}
\label{thm:sym2}
Let $K$ be a number field, and $f=f_1\cdots f_r\in K[X]$ be a (nonlinear) newly intersective polynomial such that $\mathrm{Gal}(f_i) \in \{S_{\deg(f_i)}, A_{\deg(f_i)}\}$ for all $i\in \{1,\dots, r\}$. Then each $f_i$ has degree one of $2$, $3$, $4$ and $6$.
\end{theorem}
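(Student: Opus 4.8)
The plan is to run the proof of Theorem \ref{thm:sym} verbatim, replacing the function field $\kappa=K(t)$ by the number field $\kappa = K$ and replacing the local-representation property by intersectivity. The first step is to set up the group-theoretic dictionary of Section \ref{sec:intersect}. Write $G=\mathrm{Gal}(f/K)=\mathrm{Gal}(\Omega/K)$ with $\Omega=\mathrm{Split}(f/K)=\Omega_1\cdots\Omega_r$ the joint splitting field, where $\Omega_i=\mathrm{Split}(f_i/K)$ and $G_i=\mathrm{Gal}(\Omega_i/K)$, and let $U_i<G$ be the stabilizer of a root of $f_i$. By the Chebotarev translation recalled in Section \ref{sec:intersect}, $f$ is intersective if and only if $\{U_1,\dots,U_r\}$ is a normal covering of $G$, and ``newly intersective'' precisely means that no proper subset of $\{U_1,\dots,U_r\}$ is a normal covering. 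I first note that all $f_i$ have degree $\ge 2$: a linear factor would be an intersective proper divisor (and $f$ itself is assumed nonlinear), so this setup is legitimate. Assume for contradiction that some factor has degree in $\{5,7,8,\dots\}$, and reorder so that $\deg(f_i)\in\{2,3,4,6\}$ for $i\le l$ and $\deg(f_i)\in\{5,7,8,\dots\}$ for $l+1\le i\le r$, with $l<r$.

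Next I would verify the hypotheses of Corollary \ref{cosetFixedpoint} with $\kappa=K$, $p_i=f_i$, and $\mathcal{C}=\{C_2,C_3,A_6\}$. For $i\le l$, since $\mathrm{Gal}(f_i)\in\{S_{d},A_{d}\}$ with $d\in\{2,3,4,6\}$, a direct inspection of composition series shows that all composition factors of $G_i$ lie in $\mathcal{C}$ (the groups $S_2,S_3,A_3,S_4,A_4$ have factors among $C_2,C_3$, while $S_6,A_6$ contribute only $C_2$ and $A_6$); this gives condition i). For $i\ge l+1$ one has $d_i:=\deg(f_i)\ge 5$ with $d_i\ne 6$, so $G_i\in\{S_{d_i},A_{d_i}\}$ is nonsolvable with unique minimal normal subgroup $N_i=A_{d_i}$, which is nonabelian simple and, because $d_i\ne 6$, does \emph{not} lie in $\mathcal{C}$; moreover $G_i/N_i\in\{1,C_2\}$ has composition factors in $\mathcal{C}$. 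This gives condition ii), and minimality of the covering gives condition iii). Corollary \ref{cosetFixedpoint} then yields, for each index $i_0\ge l+1$, a coset $\sigma N_{i_0}=\sigma A_{d_{i_0}}$ of $G_{i_0}$ every element of which fixes a root of some $f_j$ with $\Omega_j=\Omega_{i_0}$.

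Finally I would derive the contradiction exactly as in Theorem \ref{thm:sym}. Whenever $\Omega_j=\Omega_{i_0}$ one has $G_j=G_{i_0}$ acting in its standard degree-$d_{i_0}$ action (so in particular $\deg(f_j)=d_{i_0}$), and since $d_{i_0}\ne 6$, Theorems 5.2A and 5.2B of \cite{DM} force all index-$d_{i_0}$ subgroups of $G_{i_0}$ to be conjugate; hence all these actions on roots are permutation-equivalent to the single standard action of $S_{d_{i_0}}$ (or $A_{d_{i_0}}$) on $d_{i_0}$ points. Consequently ``fixing a root of some such $f_j$'' is equivalent to having a fixed point in that one standard action. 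But for $n\ge 5$ every coset of $A_n$ in $S_n$ (and $A_n$ itself) contains a fixed-point-free element of the standard action, so $\sigma A_{d_{i_0}}$ contains a derangement, contradicting the conclusion of the previous paragraph. Therefore $l=r$ and every $\deg(f_i)\in\{2,3,4,6\}$. The genuine content is entirely carried by Corollary \ref{cosetFixedpoint}; the only real subtlety is the degree-$6$ exception, which is precisely why $A_6$ is placed into $\mathcal{C}$ and $6$ into the allowed set, so that the \cite{DM} conjugacy results apply and force uniqueness of the relevant point-stabilizer class.
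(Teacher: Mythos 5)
Your proposal is correct and is essentially identical to the paper's own proof: the paper proves Theorem \ref{thm:sym2} simply by declaring that the argument for Theorem \ref{thm:sym} goes through verbatim with $\kappa=K$ in place of $K(t)$, and your write-up is precisely that argument, with the same choice $\mathcal{C}=\{C_2,C_3,A_6\}$, the same appeal to Corollary \ref{cosetFixedpoint}, the same use of \cite{DM}, Theorems 5.2A and 5.2B, to force the standard action, and the same derangement argument in cosets of $A_n$, $n\ge 5$. Your added remarks (ruling out linear factors via newly-intersectivity, and checking that $\Omega_j=\Omega_{i_0}$ forces $\deg(f_j)=d_{i_0}$ by abstract group isomorphism) are fine and only make explicit what the paper leaves implicit.
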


\section{Existence results}
\label{sec:exist}
In this section, we will give concrete examples to demonstrate that those classes of rational functions not excluded by Theorems \ref{thm:main} and \ref{thm:sym} do indeed occur in suitable locally representing sets. 
%We will not aim at a complete classification of all {\it sets} of rational functions locally representing a number field $K$ (which should be very difficult to achieve); 
We will  ensure to always obtain examples over $\mathbb{Q}$. We begin somewhat informally in Section \ref{sec:sporadic} by listing some noteworthy low-degree nonsolvable examples. In Section \ref{sec:symmetric}, we specifically address low degree functions with symmetric monodromy group. In Sections \ref{sec:genus0} and \ref{sec:genus1}, we deal more systematically with functions as in Case 2) of Theorem \ref{thm:main}, thus eventually deducing Theorem \ref{thm:converse}.
 
\subsection{Some ``sporadic" low-degree examples}
\label{sec:sporadic}
\begin{example}
\label{ex:sporadic}
\begin{itemize}
\item[a)] Let 
$f_1(X) =\frac{(X^2+10X+5)^3}{X}$,   $f_2(X) =X^3(X^2+5X+40)$, and $f_3(X)= \frac{(5X)^3(8X^2+25X+20)^3}{(X^2+5X+5)^5}$. Then $f_1(X)-t$, $f_2(X)-t$ and $f_3(X)-t$ all have geometric monodromy group $A_5$ (acting primitively on $6$, $5$ and $10$ points respectively) and share the same Galois closure over $\mathbb{Q}(t)$; indeed they all parameterize subcovers of Klein's icosahedral cover $\mathbb{P}^1\to \mathbb{P}^1$, i.e., the genus-$0$ Galois cover of degree $60$ with three branch points and ramification indices $2$, $3$ and $5$, cf.\ Section \ref{sec:gal_genus0}. Computation of such low degree rational functions with prescribed ramification data is standard nowadays; for more details, we refer to comparable calculations carried out explicitly in \cite[Chapter I.9]{MM}. Note that the functions $f_1, f_2, f_3$ appear, up to linear transformations, as Cases Vb), c) and d) in \cite[Theorem 1.1]{Pak}. In order to ensure that the computed functions indeed have the same Galois closure, it suffices to note that the icosahedral cover is unique up to equivalence induced by fractional linear transformations, whence it suffices to match the branch points of the three functions (with our choice, the points $0$, $1728$ and $\infty$). 
Note furthermore that these functions have geometric monodromy group $A_5$, but arithmetic monodromy group $S_5$ over $\mathbb{Q}$, as can be deduced directly from Proposition \ref{prop:bcl}). Indeed, due to $f_1$ having a unique branch point of ramification index $5$, the arithmetic monodromy group must contain the full symmetric normalizer $C_5\rtimes C_4$ of $C_5\le S_5$. In fact, the exact constant extension inside the Galois closure is $\mathbb{Q}(\sqrt{5})/\mathbb{Q}$, the quadratic subextension of $\mathbb{Q}(\zeta_5)/\mathbb{Q}$ (see, e.g., \cite[Section 8.3.2]{Serre}. 
Since the $S_5$-conjugates of the maximal subgroups of index $10$ and $6$ cover the whole group, $\{f_1,f_3\}$ is an example of a (minimal) set locally representing $\mathbb{Q}$. Furthermore, the conjugates of the index-$6$ and index-$5$ subgroups, while not covering all of $S_5$, do cover all of $A_5$, whence every automorphism fixing $\mathrm{Fix}(A_5)= \mathbb{Q}(\sqrt{5})(t)$ also fixes a root of $f_1-t$ or of $f_2-t$. Complement this by the function $f_4(X)=X^5$; the splitting field of $f_4-t$ contains $\sqrt{5}$ as well, and one has $\mathrm{Gal}(X^5-t/\mathbb{Q}(t))=C_5\rtimes C_4 \le S_5$ and $\mathrm{Gal}(X^5-t/\mathbb{Q}(\sqrt{5})(t))=D_5$. In particular,  every automorphism which does not fix $\sqrt{5}$ is of even order in $\mathrm{Gal}(X^5-t/\mathbb{Q}(t)) \le S_5$ and hence fixes a root of $X^5-t$. In total, $\{f_1,f_2,X^5\}$ is another minimal set of functions locally representing $\mathbb{Q}$.
%Question 1 seems fine for e.g. p=5, what about Question 2??
%E.g., for p=7, only multiplies of 7 are exceptions, but for p=19, there are two ``bad" residue classes 0 and -8... (however 0 mod p^N always ok since 0 is rational value)
\item[b)] %\marginpar{d) is $M_{11}$ example from \cite{KN18}.} 
Let $f_{1}(X)=\frac{f_{1,1}(X)}{f_{1,2}(X)}$ and $f_2(X)=\frac{f_{2,1}(X)}{f_{2,2}(X)}$, where
$$f_{1,1}(X):=(77X^3+10989X^2+129816X+496368)^3(77X^2+2376X+15472),$$
$$f_{1,2}(X):=(11X^2-1296)^4(11X^2+143X+621),$$
$$f_{2,1}(X):=7^3\cdot 5^{10}\cdot (11X^2+66X+91)^3(44X^3-33X^2-570X+951),$$
$$f_{2,2}(X):=11\cdot (11X^2+22X-89)^4(22X^2+110X+113)^2.$$
These are two rational functions of degree $11$ and $12$ such that $f_1(X)-t$ and $f_2(X)-t$ have the same Galois closure with Galois group $M_{11}$, the smallest Mathieu group; see \cite{KN18}. Since every element of $M_{11}$ lies in a point stabilizer in one of these two actions, these two rational functions yield another example.
\item[c)] %$P\Gamma L_2(8)$ example from \cote{Koe18}.
Let $$f_1(X) = \frac{(X^3 + 16X^2 + 160X + 384)^3 }{X^2 + 13X + 128}$$ and $$f_2(X) = \frac{(X^9 +11X^8 +4X^7 -868X^6 +6174X^5 -43974X^4 +37492X^3 -28852X^2 -2967X+211)^3(X- 5)}{-2^7 \cdot (X^3 -X^2 -9X+1)^7}.$$ This is taken from \cite[Theorem 3.4]{Koe18} and corresponds to two genus-$0$ actions of $P\Gamma L_2(8)$ of degrees $9$ and $28$.
\end{itemize}
\end{example}

\subsection{Functions with symmetric monodromy group}
\label{sec:symmetric}
We give examples of rational functions of degrees $2,3,4$ and $6$ which are part of a set of rational functions with $S_n$-monodromy and forming a minimal set in the sense of Question \ref{ques:2}. 
One such example is the example of three quadratic functions given already in the introduction. We do not aim at classifying all further such sets and content ourselves with examples showing together that all the degrees do in fact occur.

\begin{lemma}
\label{lem:symmetric}
The following are examples of minimal sets of rational functions with monodromy group $S_n$ locally representing $\mathbb{Q}$.
\begin{itemize}
\item[a)] $f_1(X) = X^4+aX^2+bX$ and $f_2(X) = -\frac{X^3+2aX^2 + a^2X-b^2}{4X}$, for any $a,b\in \mathbb{Q}$ such that $f_1(X)-t$ has Galois group $S_4$.
\item[b)]  $f_1(X) = X^5(5X-6)$, $f_2(X) = \frac{6^6 X}{(X-1)^3(X-16)^2(X-25)}$, $f_3(X) =5X^2-1$.
\end{itemize}
\end{lemma}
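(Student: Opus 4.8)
The plan is to verify the two examples in Lemma~\ref{lem:symmetric} by reducing each to the group-theoretic criterion of Lemma~\ref{lem:transl}, specifically the equivalence 2)$\Leftrightarrow$5): a set $\{f_i\}$ locally represents $\mathbb{Q}$ precisely when the point stabilizers $U_i$ form a normal covering of the joint Galois group $G=\mathrm{Gal}(\prod_i(f_i(X)-t)/\mathbb{Q}(t))$, with minimality corresponding to no proper subset of the $U_i$ covering. So for each example I would first compute the common Galois closure and identify $G$, then exhibit the conjugacy-class structure showing every element of $G$ lies in some conjugate $U_i$, while checking that dropping any single $U_i$ leaves some conjugacy class uncovered.

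For part a), the functions $f_1(X)=X^4+aX^2+bX$ and $f_2$ are constructed so that $f_1(X)-t$ and $f_2(X)-t$ share a Galois closure with group $S_4$ (this is the classical resolvent-cubic relationship: $f_2$ parameterizes the degree-$3$ subcover corresponding to the index-$3$ subgroups, i.e. the action of $S_4$ on the three partitions into pairs, while $f_1$ gives the natural degree-$4$ action). The key claim is that the conjugates of a point stabilizer of the degree-$4$ action (a subgroup $S_3$ of index $4$) together with those of a point stabilizer of the degree-$3$ action (a subgroup $D_4$ of index $3$) cover all of $S_4$. I would verify this by running through the cycle types of $S_4$: the identity, transpositions, and $3$-cycles fix a point in the natural action (so lie in a conjugate of $S_3$), while the double transpositions $(12)(34)$ etc.\ and the $4$-cycles must be caught by the degree-$3$ action — a $4$-cycle acts on the three pair-partitions as a transposition (fixing one), and a double transposition fixes one partition, so both lie in conjugates of $D_4$. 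Minimality follows since $S_3$-conjugates miss the $4$-cycles and $D_4$-conjugates miss the transpositions, so neither function alone suffices. I would also confirm the arithmetic monodromy equals the geometric one (both $S_4$) so that the group computation over $\mathbb{Q}(t)$ genuinely controls the local behavior.

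For part b), the functions have degrees $6$, $6$, and $2$ with $S_6$-monodromy; here the subtlety is the exceptional outer automorphism of $S_6$, which makes two inequivalent transitive degree-$6$ actions. I would identify $f_1(X)=X^5(5X-6)$ and $f_2$ as parameterizing the two non-conjugate classes of index-$6$ subgroups of $S_6$ (the point stabilizer $S_5$ in the natural action, and the transitive $S_5$ coming from the outer automorphism), so that their point stabilizers are genuinely different conjugacy classes, while $f_3(X)=5X^2-1$ contributes an index-$2$ stabilizer detecting the sign. The covering claim is that every element of $S_6$ either lies in one of the two $S_5$-classes or has even... more precisely is caught by the quadratic — and I would check via cycle types that each class of $S_6$ fixes a point in at least one of the three actions, with minimality confirmed by exhibiting, for each $f_i$, a class covered only by that one.

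The main obstacle I anticipate is the explicit verification that the three stated rational functions in each part genuinely share a common Galois closure with the asserted group and the asserted pair of inequivalent degree-$6$ actions in part b) — this rests on the explicit algebraic identities (resolvent relations, or matching branch-point and ramification data as in Proposition~\ref{prop:gal_genus0} and the techniques of Example~\ref{ex:sporadic}) rather than on the clean group-theoretic covering argument. In practice I would match ramification types and branch points to pin down the covers up to linear equivalence, then reduce modulo a few primes to confirm the arithmetic monodromy group and the distinctness of the two $S_6$-actions, so that Lemma~\ref{lem:transl} applies with the precise stabilizers claimed.
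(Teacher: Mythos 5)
Your approach is essentially the same as the paper's: both parts are reduced via Lemma \ref{lem:transl} to a normal-covering statement for the joint Galois group, part a) via the resolvent-cubic relationship (conjugates of a root stabilizer $S_3$ and a partition stabilizer $D_4$ cover $S_4$), and part b) via the two conjugacy classes of $S_5$-subgroups of $S_6$ (interchanged by the outer automorphism) together with $A_6$. The explicit identification you defer to the end is also how the paper finishes part b): it checks that the fixed fields of $A_6$ and of the exotic $S_5$ inside the splitting field of $f_1(X)-t$ are rational function fields --- for $A_6$ via the discriminant of $f_1(X)-t$, and for the exotic $S_5$ by observing that the inertia generators ($5$-cycle, transposition, $6$-cycle at $t=0,-1,\infty$) become a $5$-cycle, a triple transposition and an element of cycle type $(3.2.1)$ in the coset action, which is a genus-$0$ tuple by Riemann--Hurwitz, so the corresponding subcover is given by a rational function, namely $f_2$.

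There is, however, one concrete error in your verification for part a): you justify minimality by claiming that ``$D_4$-conjugates miss the transpositions,'' but they do not --- the stabilizer of the partition $\{12\mid 34\}$ is $\{e,(12),(34),(12)(34),(13)(24),(14)(23),(1324),(1423)\}$, which visibly contains transpositions. What the $D_4$-conjugates miss are exactly the $3$-cycles: a $3$-cycle permutes the three pair-partitions cyclically and hence fixes none of them. So the correct minimality argument is that the $3$-cycles force $f_1$ to stay in the set, while the $4$-cycles (and double transpositions), which fix no point in the natural degree-$4$ action, force $f_2$. The conclusion is unaffected, but your stated reason is false and needs this repair.
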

\begin{proof}
In a), $f_1(X)-t$ is a generic quartic polynomial (with constant coefficient taken as $t$) and $f_2$ its cubic resolvent. In other words, $\mathrm{Gal}((f_1(X)-t)(f_2(X)-t)/\mathbb{Q}(t))$ is $S_4$, and the subgroups fixing a root of $f_1$ resp., of $f_2$ are $S_3$ and $D_4\le S_4$. Since the conjugates of these two subgroups cover all of $S_4$, the assertion follows.\\
In b), $f_1(X)=t$ gives a cover with Galois group $S_6$, ramified at $t=0$,  $t=-1$ and $t=\infty$, with inertia groups generated by a $5$-cycle, a transposition and a $6$-cycle respectively. We use the fact that $S_6$ has two different conjugacy classes of subgroups $U, V$ both isomorphic to $S_5$, and that the conjugates of $U$ and $V$, together with $A_6$, cover all of $S_6$. It then remains to quickly verify that the fixed fields of all these subgroups in the splitting field of $f_1(X)-t$ are rational function fields; for the subgroup $A_6$, this is easy and follows quickly from the shape of the discriminant of $f_1(X)-t$. For the second class of $S_5$-subgroups, one verifies that the inertia group generators above become a $5$-cycle, triple transposition and an element of cycle structure $(3.2.1)$ in this coset action, yielding a genus-$0$ tuple by Riemann-Hurwitz, and it is then again straightforward to compute that the underlying rational function is given by $f_2$.\end{proof}

\subsection{Examples involving monomials and Chebyshev polynomials}
\label{sec:genus0}
Rational functions $f(X)\in \mathbb{C}(X)$ such that the splitting field of $f(X)-t$ has genus $0$ are well-understood since the days of Klein. 
In particular, the only infinite series of  indecomposable such functions are those linearly related over $\mathbb{C}$ to monomials $X^p$ (for a prime $p$) or to Chebyshev polynomials $T_p$ (for an odd prime $p$), see Proposition \ref{prop:gal_genus0} and Remark \ref{rem:gal_genus0}. We show how to realize these as part of a pair or triple of rational functions locally representing $\mathbb{Q}$.

\begin{lemma}
\label{lem:cheb}
Let $p$ be an odd prime, $f_1(X)=X^p$ and $f_2(X)=T_p(X)$. There exists a quadratic rational function $f_3(X) \in \mathbb{Q}(X)$ such that $\{f_1,f_2,f_3\}$ is a minimal set locally representing $\mathbb{Q}$. 
%\marginpar{At all important for this construction that $f_1,f_2$ share one branch point?!?}
\end{lemma}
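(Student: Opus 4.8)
The plan is to translate the assertion, via the equivalence (2)$\Leftrightarrow$(5) of Lemma \ref{lem:transl}, into a statement about normal coverings of the joint Galois group $G := \Gal(\prod_{i=1}^3(f_i(X)-t)/\qq(t))$, and to build $f_3$ so that exactly the elements missed by the stabilizers of $f_1$ and $f_2$ get covered. First I would record the monodromy data. Both $X^p$ and $T_p$ have arithmetic monodromy group $AGL_1(p) = C_p \rtimes C_{p-1}$ in its natural action on $p$ points, with point stabilizer $C_{p-1}$: for $X^p$ this is immediate from $\mathrm{Split}(X^p-t) = \qq(\zeta_p, t^{1/p})$, while for $T_p$ it follows from Proposition \ref{prop:bcl} applied to the $\qq$-rational branch point $t=\infty$ (whose inertia is a $p$-cycle), which forces the decomposition group to realise the full $\Gal(\qq(\zeta_p)/\qq) = C_{p-1}$ acting on $C_p$, together with the fact that the arithmetic monodromy lies in the symmetric normalizer $AGL_1(p)$ of the geometric monodromy $D_p$ (Remark \ref{rem:gal_genus0}a). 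An element of $AGL_1(p)$ fixes a point iff it is not a nontrivial translation, i.e. iff its multiplier $m(\cdot) \in \ff_p^\ast$ is $\neq 1$ or the element is the identity.

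Next I would compute $G$. Writing $\Omega_1, \Omega_2$ for the two splitting fields, their geometric closures are the $C_p$- and $D_p$-covers branched over $\{0,\infty\}$ and $\{1,-1,\infty\}$; since $C_p$ is not a quotient of $D_p$ these are linearly disjoint over $\overline{\qq}(t)$, so $\Omega_1 \cap \Omega_2$ is a constant extension, and intersecting the constant fields $\qq(\zeta_p)$ and $\qq(\zeta_p)^+$ gives $\Omega_1\cap\Omega_2 = \qq(\zeta_p)^+(t)$. Hence $G$ is the fibre product $AGL_1(p)\times_{C_{(p-1)/2}}AGL_1(p) = \{(a_1,a_2): m(a_1) \equiv m(a_2) \bmod \{\pm 1\}\}$, the two maps to $C_{(p-1)/2} = \Gal(\qq(\zeta_p)^+/\qq)$ being ``multiplier modulo $\pm1$''. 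Conjugates of the stabilizer $U_i$ of a root of $f_i$ ($i=1,2$) then cover precisely the $(a_1,a_2)\in G$ whose $i$-th coordinate fixes a point, so the elements covered by neither $U_1$ nor $U_2$ are exactly those with $m(a_1)=m(a_2)=1$ and both coordinates nontrivial; these lie in the odd-order subgroup $C_p\times C_p\le G$.

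The key idea is to take for $f_3$ the quadratic function whose splitting field is the fixed field of the character
\[
\chi\colon G\longrightarrow\{\pm1\},\qquad (a_1,a_2)\mapsto m(a_1)/m(a_2),
\]
which cuts out a well-defined index-$2$ subgroup precisely because the fibre-product condition forces $m(a_1)/m(a_2)\in\{\pm1\}$. Since $C_p\times C_p\subseteq\ker\chi$, the normal stabilizer $U_3 = \ker\chi$ covers all the escaping elements, so $U_1,U_2,U_3$ form a normal covering and $\{f_1,f_2,f_3\}$ locally represents $\qq$ by Lemma \ref{lem:transl}. To see that $\ker\chi$ really is the splitting field of a \emph{quadratic rational function}, I would inspect the ramification of $\chi$: it is trivial on inertia at $0$ and $\infty$ (both coordinates rotations, $m=1$) and nontrivial at $\pm 1$ (where the $\Omega_2$-inertia is a reflection, $m=-1$), and it is nontrivial on the geometric monodromy; hence its fixed field is $\qq(t)(\sqrt{c_0(t^2-1)})$ for some $c_0\in\qq^\ast$. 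The explicit family $f_3(X)=\frac{X^2+\gamma}{X^2-\gamma}$ has splitting field $\qq(t)(\sqrt{\gamma(t^2-1)})$, so taking $\gamma=c_0$ realizes $\Omega_3$ inside $\Omega_1\Omega_2$, leaving $G$ unchanged.

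Finally I would verify minimality. Singletons cannot locally represent by Corollary \ref{cor:transl3}b; the pair $\{f_1,f_2\}$ fails because of the escaping elements of $C_p\times C_p$; and for $\{f_1,f_3\}$ (resp. $\{f_2,f_3\}$) one exhibits $(a_1,a_2)\in G$ with $a_1$ a nontrivial translation and $m(a_2)=-1$ (resp. $a_2$ a nontrivial translation and $m(a_1)=-1$), which is fixed-point-free on the roots of both functions of the pair and hence, projecting to $\Gal(\Omega_1\Omega_3/\qq(t))$ (resp. $\Gal(\Omega_2\Omega_3/\qq(t))$), witnesses failure of local representation. I expect the main obstacle to be pinning down the \emph{correct} quadratic character: the constant quadratic subextension $\sqrt{p^\ast}$ of $\Omega_1$ is not the splitting field of any quadratic function (its discriminant is constant), while the quadratic subextension of $\Omega_2$ itself would make $\{f_2,f_3\}$ already locally representing and thereby destroy minimality; only the ``anti-diagonal'' multiplier character $\chi$ above simultaneously covers the escaping elements and keeps every two-element subset deficient.
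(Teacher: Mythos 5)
Your proposal is correct and takes essentially the same route as the paper's proof: you arrive at the same fibre product $G \cong \{(a_1,a_2)\in AGL_1(p)^2 : m(a_1)\equiv m(a_2) \bmod \pm 1\}$, your $\ker\chi$ is exactly the paper's index-$2$ subgroup $U=\{(g,h):gh^{-1}\in C_p\}$, you identify its fixed field by the same ramification-at-$t=\pm 1$ argument, realize it by the same quadratic function $\frac{X^2+c}{X^2-c}$, and certify minimality with the same witness elements. The only cosmetic differences are in how the joint Galois group is pinned down (geometric linear disjointness plus intersection of constant fields, and Proposition \ref{prop:bcl} for $\Mon(T_p)$, versus the paper's comparison of constant and non-constant quadratic subextensions over $\mathbb{Q}(\cos(2\pi/p))(t)$).
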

\begin{proof}
Both $f_1$ and $f_2$ have monodromy group $AGL_1(p) = C_p\rtimes C_{p-1}$ over $\mathbb{Q}$. In particular, $\mathrm{Gal}((f_1(X)-t)(f_2(X)-t)/\mathbb{Q}(t))$ is a subgroup of $AGL_1(p)^2$. Furthermore, the splitting fields of both $f_1(X)-t$ and $f_2(X)-t$ contain the degree $\frac{p-1}{2}$ extension $\mathbb{Q}(\cos(2\pi/p))/\mathbb{Q}$. Indeed, the full constant fields of these splitting fields are $\mathbb{Q}(\zeta_p)$ and $\mathbb{Q}(\cos(2\pi/p))$, respectively, see, e.g., \cite[Proposition 5.4]{GMS}. 
Next, consider the two splitting fields of $f_1(X)-t$ and of $f_2(X)-t$ over the common constant extension $\mathbb{Q}(\cos(2\pi/p))(t)$. Both have Galois group $D_p$, and must furthermore be linearly disjoint, since otherwise their intersection were a normal extension of $\mathbb{Q}(\cos(2\pi/p))(t)$, and would thus contain the quadratic subextension of each $D_p$-extension, due to the fact that $C_p$ is the only nontrivial normal subgroup of $D_p$. But this is impossible because for $f_1$, this quadratic extension is still a constant extension $\mathbb{Q}(\zeta_p)(t)/\mathbb{Q}(\cos(2\pi/p))(t)$, whereas for $f_2$ it is non-constant (ramified at $t=\pm 1$). In total, we have obtained that  $G:=\mathrm{Gal}((f_1(X)-t)(f_2(X)-t)/\mathbb{Q}(t)) \cong \{(g,h)\in AGL_1(p)^2\mid gh^{-1}\in D_p\}$. 
%Moreover, the roots of $f_3$ are of the form $\pm \sqrt{\frac{t+1}{t-1}}$.

Now set $U:=\{(g,h)\in G\mid gh^{-1}\in C_p\}$. This is an index-$2$ normal subgroup of $G$. Let $F\supset \mathbb{Q}(t)$ be the fixed field of $U$ inside the splitting field of $(f_1-t)(f_2-t)$. Since the splitting field of $f_2-t$ is ramified at $t=\pm 1$ of ramification index $2$, whereas the splitting field of $f_1-t$ is unramified at these points, the inertia group generators at $t=\pm 1$ in the joint Galois closure are of the form $(1,\sigma)\in G$ with $\sigma\in D_p\setminus C_p$. In particular, these are not contained in $U$, whence the quadratic extension $F/\mathbb{Q}(t)$ is ramified at $t=\pm 1$ (and only there). $F$ is thus of the form $F=\mathbb{Q}(\sqrt{c\frac{t+1}{t-1}})$ for some constant $c$, and hence contains the roots of $f_3(X)-t$ with $f_3(X):=\frac{X^2+c}{X^2-c}$.
By construction, an element of $G$ not fixing a root of $f_3-t$ must be of the form $(g,h)$ where $g$ and $h$ are at least not both in $C_p$. But then one of them is an element of $AGL_1(p)\le S_p$ fixing a point, i.e., a root of $f_i-t$ for some $i\in \{1,2\}$.

 Finally minimality follows because an element $(g,h)\in G\le AGL_1(p)^2$ where both $g$ and $h$ are $p$-cycles fixes no root of $f_i-t$ ($i=1,2$), and an element of the form $(g,h)$ where $g$ is a $p$-cycle and $h$ is an involution in $D_p$ fixes no root of $f_1-t$ or $f_3-t$ (and same for $f_2$ and $f_3$).
% 
% 
% \marginpar{Insert field and groups diagram?}
\end{proof}

The main underlying idea of the above proof, namely, to construct locally representing sets of rational functions via assembling ``suitable" quadratic subextensions of the splitting fields, can be generalized to provide further-reaching conclusions. We will use it in the next section, and in the following lemma, which constructs examples showing that a minimal set of rational functions locally representing $\mathbb{Q}$ can be of arbitrarily large cardinality.
\begin{lemma}
\label{lem:many}
Let $p\equiv 3$ mod $4$ be a prime. 
%, and let  $p_1, \dots, p_r \in \mathbb{Z}\setminus\{0, 1\}$ be such that $p, p_1,\dots, p_r$ are pairwise coprime ($r\ge 0$). 
Then there exist rational functions $f_1(X),\dots f_r(X) \in \mathbb{Q}(X)$, all linearly related to $X^p$ over $\mathbb{C}$, and $f_{r+1}(X)\in \mathbb{Q}(X)$ of degree $2$, such that $\{f_1,\dots, f_r, f_{r+1}, T_p(X)\}$ is a minimal set locally representing $\mathbb{Q}$.
%Careful, switched role of the last two functions in the tuple.
%, where $f_{r+1}:=\frac{T_p(X)-1}{T_p(X)+1}$ and $f_{r+2}:= -p\cdot p_1\cdots p_r X^2$.
\end{lemma}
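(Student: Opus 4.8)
The plan is to mimic the construction of Lemma~\ref{lem:cheb}, but to replace the single power map $X^p$ by $r$ distinct \emph{twisted} power maps and to couple them all to $T_p$ through one quadratic function. For a non-square integer $d$ let $f_d\in\mathbb{Q}(X)$ denote the degree-$p$ R\'edei function determined by
\[
\frac{f_d(X)-\sqrt d}{\,f_d(X)+\sqrt d\,}=\left(\frac{X-\sqrt d}{\,X+\sqrt d\,}\right)^{\!p}.
\]
It lies in $\mathbb{Q}(X)$, is linearly related to $X^p$ over $\mathbb{Q}(\sqrt d)\subset\mathbb{C}$ (hence geometrically $C_p$, so geometrically indecomposable), and has branch points $\pm\sqrt d$. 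First I would record its arithmetic: as long as $\mathbb{Q}(\sqrt d)\neq\mathbb{Q}(\sqrt{-p})$, the arithmetic monodromy group is $AGL_1(p)$, the constant field of $\mathrm{Split}(f_d(X)-t/\mathbb{Q}(t))$ has degree $p-1$ with unique quadratic subfield $\mathbb{Q}(\sqrt{-dp})$, and over $\mathbb{Q}(\cos(2\pi/p))(t)$ the Galois group is $D_p$. I would then choose distinct primes $q_1,\dots,q_r$ different from $p$ and set $f_i:=f_{q_i}$; because $p\equiv 3\pmod 4$ the real field $\mathbb{Q}(\cos(2\pi/p))$ has odd degree $(p-1)/2$ and therefore contains no quadratic subfield, so the fields $\mathbb{Q}(\sqrt{-q_ip})$ are pairwise distinct and independent over $\mathbb{Q}(\cos(2\pi/p))$.

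The crucial step is to pin down $G:=\mathrm{Gal}\big(\prod_{i=1}^r(f_i(X)-t)\cdot (T_p(X)-t)/\mathbb{Q}(t)\big)$. All $r+1$ functions share the constant field $\mathbb{Q}(\cos(2\pi/p))$, over which each of their splitting fields is a $D_p$-extension. Since for an odd prime $p$ the only normal subgroups of $D_p$ are $1,C_p,D_p$, two such $D_p$-extensions are linearly disjoint exactly when their (unique) quadratic subextensions differ. Here the quadratic subextension of $f_i$ is the \emph{constant} field $\mathbb{Q}(\cos(2\pi/p),\sqrt{-q_ip})$, whereas $T_p$ has, again because $(p-1)/2$ is odd, no constant quadratic subextension, its quadratic subextension being ramified precisely at $t=\pm1$. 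Hence all $r+1$ quadratic subextensions are distinct, the extensions are pairwise linearly disjoint over $\mathbb{Q}(\cos(2\pi/p))(t)$, and writing $a(g)$ for the linear coefficient of $g\in AGL_1(p)$ I obtain the fibre product
\[
G=\Big\{(g_1,\dots,g_r,h)\in AGL_1(p)^{r+1}\ :\ a(g_1)\equiv\cdots\equiv a(g_r)\equiv a(h)\!\!\pmod{\{\pm1\}}\Big\},
\]
the exact generalisation of the group in Lemma~\ref{lem:cheb}, in which the signs $\epsilon_i:=a(g_i)/a(h)\in\{\pm1\}$ range freely thanks to the independence of the twists.

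With $G$ in hand the quadratic function is produced as in Lemma~\ref{lem:cheb}. Consider
\[
\psi\colon G\longrightarrow\{\pm1\},\qquad \psi(g_1,\dots,g_r,h)=\Big(\prod_{i=1}^r\epsilon_i\Big)\cdot\big(a(h)\mid p\big)^{\,r+1}.
\]
This is a homomorphism, trivial on the geometric monodromy $C_p^{r+1}$ (all linear parts equal $1$), and I would check that its fixed field is ramified at exactly $t=\pm1$: the only inertia contributions from the $f_i$ and from $t=\infty$ come from $p$-cycles, on which $\psi$ vanishes, while an inertia generator at $t=\pm1$ is $(1,\dots,1,\sigma)$ with $\sigma$ a reflection, on which (using $(-1\mid p)=-1$ since $p\equiv3\pmod4$) $\psi$ takes the value $(-1)^r(-1)^{r+1}=-1$. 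Consequently $\ker\psi$ is the stabiliser of a root of a genuine degree-$2$ function $f_{r+1}$ with branch points $\pm1$ (of the shape $\tfrac{X^2+c}{X^2-c}$, with $c$ chosen to match the constant part of the fixed field). Covering and minimality are then a direct bookkeeping: an element fixing no root of any $f_i$ or of $T_p$ is a nontrivial $p$-cycle in every coordinate, hence has all $\epsilon_i=1$ and $a(h)=1$, so $\psi=1$ and it fixes a root of $f_{r+1}$; this yields local representation via Lemma~\ref{lem:transl}. For minimality I would exhibit, for each function, an element covered by it alone: a nontrivial $p$-cycle tuple for $f_{r+1}$; a tuple with $\epsilon_i=-1$, all other $\epsilon_j=1$, $a(h)=1$ for $f_i$; and a tuple with all $\epsilon_j=-1$, $a(h)=-1$ for $T_p$ (the latter has $\psi=-1$, which is exactly the ramification condition already verified).

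The genuinely substantial part is the second paragraph: establishing the precise shape of $G$. This rests on the arithmetic of the twisted power maps---that $f_{q_i}$ has full monodromy $AGL_1(p)$ with constant quadratic subfield $\mathbb{Q}(\sqrt{-q_ip})$---and on the linear-disjointness argument, both of which exploit $p\equiv 3\pmod 4$ through the fact that $\mathbb{Q}(\cos(2\pi/p))$ has odd degree and hence no quadratic subfield (this is what forces the quadratic subextension of $T_p$ to be the ramified one, and what guarantees independence of the twists). Once $G$ is identified, the covering and minimality computations are routine extensions of Lemma~\ref{lem:cheb}, and letting $r\to\infty$ produces minimal locally representing sets of unbounded size.
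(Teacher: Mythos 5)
Your construction is, in essence, the paper's own: the twisted power maps $f_{q_i}$ you write down are exactly the R\'edei functions that the paper produces in Lemma \ref{lem:twist} (there obtained by pulling back a Chebyshev-type cover along $s=cd_it^2$, with the constant quadratic subfield of $\mathrm{Split}(f_i(X)-t)$ playing the role of your $\mathbb{Q}(\sqrt{-q_ip})$), and your character $\psi$ cuts out precisely the quadratic field $\mathbb{Q}(t)\bigl(\sqrt{(-p)^rq_1\cdots q_r\,\mu(t)}\bigr)$, which is the paper's field $\mathbb{Q}\bigl(\sqrt{d_1\cdots d_r\,\mu(t)}\bigr)$ with $d_i=-q_ip$. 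The covering and minimality bookkeeping (nontrivial translations versus elements of nontrivial linear part in $AGL_1(p)$) is the same as in the paper's proof of Lemma \ref{lem:many}; the only substantive difference is that you pin down $G$ completely as a fibre product, where the paper gets by with $G\supseteq C_p^{r+1}$ together with linear disjointness of the quadratic subextensions.

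However, as written, your identification of $G$ has a gap. You establish only that the $r+1$ many $D_p$-extensions of $\kappa:=\mathbb{Q}(\cos(2\pi/p))(t)$ are \emph{pairwise} linearly disjoint (their quadratic subextensions are pairwise distinct), and then assert that the signs $\epsilon_i$ range freely ``thanks to the independence of the twists''. Pairwise disjointness does not yield this: to get $\mathrm{Gal}(\Omega/\kappa)\cong D_p^{r+1}$, equivalently freely ranging signs, you need the square classes of $-q_1p,\dots,-q_rp,\mu(t)$ to be $\mathbb{F}_2$-linearly \emph{independent} in $\kappa^{\times}/(\kappa^{\times})^2$, not merely pairwise distinct (compare $\mathbb{Q}(\sqrt{2}),\mathbb{Q}(\sqrt{3}),\mathbb{Q}(\sqrt{6})$: pairwise disjoint, yet dependent). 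Your minimality argument genuinely uses the full fibre product (you need elements with exactly one $\epsilon_i=-1$), so this independence must be proved. Fortunately it is true and easy for your choices: any nonempty product $\prod_{i\in S}(-q_ip)$ is divisible by each prime $q_i$, $i\in S$, exactly once, hence is not a rational square, hence not a square in the odd-degree field $\mathbb{Q}(\cos(2\pi/p))$; and any product involving $\mu(t)$ is non-constant modulo squares. A separate, smaller point: your parenthetical claim that the geometric monodromy group of the product is $C_p^{r+1}$ is wrong --- it is $C_p^r\times D_p$, since $T_p$ has geometric monodromy $D_p$ --- and in fact $\psi$ \emph{must} be nontrivial on the geometric monodromy group, or else its fixed field would be a constant extension and could not be the splitting field of $f_{r+1}(X)-t$ for any quadratic $f_{r+1}$. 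Your actual ramification computation (triviality on translation tuples and on inertia over $\pm\sqrt{q_i}$ and $\infty$, nontriviality over $t=\pm1$) is what is needed and is correct, so this is only a mislabeling rather than a flaw in the argument.
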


To prove Lemma \ref{lem:many}, we use the following construction to obtain many rational functions, all linearly related to $X^p$ over $\mathbb{C}$, but not over $\mathbb{Q}$.
%Problem: In the proof of above Lemma, need to also keep track of the rational function which formed the ``base" for the twisting...
\begin{lemma}
\label{lem:twist}
Let $r\in \mathbb{N}$, let $K_1,\dots, K_r\supset \mathbb{Q}$ be pairwise linearly disjoint quadratic number fields, and let $p\equiv 3$ mod $4$ be a prime. Then there exists rational functions $f_i\in \mathbb{Q}(X)$, $i=1,\dots, r$, all linearly related to $X^p$ over $\mathbb{C}$, such that the following hold.
\begin{itemize}
\item[i)] $\mathrm{Split}(f_i(X)-t/\mathbb{Q}(t))$ contains $K_i$, for all $i=1,\dots, r$.
\item[ii)] $\mathrm{Gal}(\prod_{i=1}^r (f_i(X)-t)/\overline{\mathbb{Q}}(t)) \cong C_p^r$.
\end{itemize}
%Also for Q(\sqrt{ct}) with f linearly related to T_p...
\end{lemma}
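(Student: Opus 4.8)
The goal is to produce, for pairwise linearly disjoint quadratic fields $K_1,\dots,K_r$ and a prime $p\equiv 3\pmod 4$, rational functions $f_i$ linearly related to $X^p$ over $\mathbb{C}$ with prescribed quadratic constant subfields $K_i$ in their splitting fields, and with the geometric monodromy group of the product being exactly $C_p^r$. The natural strategy is to realize each $f_i$ as a \emph{twist} of the power map $X^p$. Recall that $X^p$ has monodromy group $AGL_1(p)=C_p\rtimes C_{p-1}$ over $\mathbb{Q}$ with geometric monodromy $C_p$, and its splitting field over $\overline{\mathbb{Q}}(t)$ is a single $C_p$-extension. I would start from the observation that all functions linearly related to $X^p$ over $\mathbb{C}$ arise by post- and pre-composing with fractional linear transformations, and that such twists are parameterized by how the arithmetic monodromy embeds $C_p$-extensions into extensions of $\mathbb{Q}(t)$; the constant field that appears is controlled by a Galois-cohomological/Kummer-type twist.

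\textbf{Key steps.} First I would make the twisting construction explicit: write $f_i(X)=\lambda_i\circ X^p\circ \mu_i$ for suitable $\lambda_i,\mu_i$ of degree $1$ over $\overline{\mathbb{Q}}$, chosen so that $f_i$ itself is defined over $\mathbb{Q}$ but the cover $\mathbb{P}^1\to\mathbb{P}^1$ is a nontrivial form of the power-map cover. Concretely, the power map $X^p$ corresponds to the Kummer extension $\mathbb{Q}(t)(\sqrt[p]{t})$, and twisting the two branch points $0,\infty$ by a quadratic character cutting out $K_i$ produces a function whose $C_p$-cover acquires $K_i$ as its constant field inside the splitting field. The crucial arithmetic input is that $p\equiv 3\pmod 4$: this guarantees that $\mathbb{Q}(\zeta_p)$ contains $\mathbb{Q}(\sqrt{-p})$ (since $p^*=(-1)^{(p-1)/2}p=-p$ here), so that each quadratic $K_i$ can be arranged to sit inside a splitting field built from a $p$-th power/Kummer extension. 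I would verify condition i) by tracking the action of an inertia generator at the branch points and showing the fixed field of the geometric monodromy $C_p$ inside the arithmetic splitting field is precisely $K_i(t)$. For condition ii), the main point is that the geometric monodromy of the product is the subgroup of $\prod_i C_p$ generated by the images of a branch-cycle generator; since the $f_i$ are pairwise \emph{not} linearly related over $\mathbb{Q}$ (their constant fields $K_i$ being distinct, pairwise linearly disjoint), the $C_p$-covers are geometrically distinct and hence geometrically linearly disjoint. I would phrase this as: a $C_p$-cover of $\mathbb{P}^1_{\overline{\mathbb{Q}}}$ is determined up to isomorphism by its branch locus and the Kummer data, and distinct twists give geometrically non-isomorphic covers, so by the analogue of the linear-disjointness argument in the proof of Lemma \ref{lem:cheb} the geometric Galois group of $\prod_i(f_i(X)-t)$ is the full product $C_p^r$.

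\textbf{Main obstacle.} The delicate part is establishing the geometric linear disjointness needed for ii), i.e. ruling out any unexpected coincidence among the $C_p$-covers attached to different $i$. Geometrically the $f_i$ are all linearly related to $X^p$, so each splitting field is a $C_p$-extension of $\overline{\mathbb{Q}}(t)$, and two such extensions coincide iff the corresponding Kummer elements differ by a $p$-th power in $\overline{\mathbb{Q}}(t)^\times$. I would therefore need to choose the twisting data (the branch points of each $f_i$, i.e. the images of $0,\infty$ under $\lambda_i$) in ``general position'' so that no nontrivial product of the Kummer generators becomes a $p$-th power; this is where the freedom in the fractional linear transformations $\lambda_i$ must be exploited, and it must be reconciled with the simultaneous requirement i) that each $f_i$ be $\mathbb{Q}$-rational with the \emph{prescribed} constant field $K_i$. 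Concretely, one uses that the $r$ quadratic characters cutting out $K_1,\dots,K_r$ are $\mathbb{F}_2$-linearly independent (equivalent to pairwise linear disjointness of the $K_i$), and transfers this independence to the $\mathbb{F}_p$-independence of the Kummer classes; since $C_p$ has no subgroup interacting with the order-$2$ twist, the two pieces of data (the order-$p$ geometric cover and the order-$2$ constant twist) do not interfere, precisely because $\gcd(2,p)=1$. Once independence of the Kummer classes is secured, condition ii) follows formally, and assembling the explicit $\mathbb{Q}$-models of the twisted power maps (a routine but slightly involved descent computation) completes the construction.
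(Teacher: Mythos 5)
Your overall strategy -- realizing the $f_i$ as quadratic twists of $X^p$ (i.e.\ R\'edei functions) and proving ii) by $\mathbb{F}_p$-independence of the associated Kummer classes over $\overline{\mathbb{Q}}(t)$ -- is a genuinely different route from the paper's, which instead pulls back the transformed Chebyshev cover $g=\frac{T_p-1}{T_p+1}$ along the quadratic map $s=cd_it^2$ and uses Abhyankar's lemma, Riemann--Hurwitz and an odd-degree-divisor argument to manufacture each $f_i$. However, the central claim of your step i) is false as stated. If $f$ is the twist of $X^p$ with totally ramified points $\pm\sqrt{d}$, so that $\frac{f(X)+\sqrt{d}}{f(X)-\sqrt{d}}=\left(\frac{X+\sqrt{d}}{X-\sqrt{d}}\right)^p$, then $\sqrt{d}$ does \emph{not} lie in $\mathrm{Split}(f(X)-t/\mathbb{Q}(t))$: the assignment $\sqrt{d}\mapsto-\sqrt{d}$, $\zeta_p\mapsto\zeta_p^{-1}$, $w\mapsto w^{-1}$ (where $w^p=\frac{t+\sqrt{d}}{t-\sqrt{d}}$) is an automorphism over $\mathbb{Q}(t)$ which fixes every root $\sqrt{d}\,\frac{\zeta_p^kw+1}{\zeta_p^kw-1}$ while moving $\sqrt{d}$. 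The unique quadratic constant subfield of the splitting field is in fact $\mathbb{Q}(\sqrt{p^*d})$ with $p^*=(-1)^{(p-1)/2}p$, because the Gauss sum $\sqrt{p^*}\in\mathbb{Q}(\zeta_p)$ is negated by $\zeta_p\mapsto\zeta_p^{-1}$ exactly when $p\equiv 3 \pmod 4$. This is precisely where the hypothesis $p\equiv 3\pmod 4$ is indispensable: for $p\equiv 1\pmod 4$ the quadratic constant subfield is $\mathbb{Q}(\sqrt{p})$ no matter what $d$ is, so arbitrary $K_i$ are unreachable; your explanation via $\mathbb{Q}(\sqrt{-p})\subset\mathbb{Q}(\zeta_p)$ gestures at the right fact but never turns it into the needed computation. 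The repair is easy -- twist by $d=-pd_i$ (branch points $\pm\sqrt{-pd_i}$) rather than by $d_i$ -- but as written the construction produces the wrong field. Relatedly, your claim that the fixed field of the geometric $C_p$ is ``precisely $K_i(t)$'' is also wrong: that fixed field is the full constant extension, of degree $p-1$ over $\mathbb{Q}(t)$, which merely \emph{contains} $K_i$ (containment is all the lemma needs).

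With that correction, the rest of your plan does go through, and in a way that parallels the paper: pairwise linear disjointness of the $K_i$ makes the branch loci $\{\pm\sqrt{-pd_i}\}$ pairwise disjoint, and a relation $\prod_i u_i^{k_i}\in(\overline{\mathbb{Q}}(t)^\times)^p$ among Kummer generators whose divisors have pairwise disjoint supports forces $p\mid k_i$ for all $i$, giving ii); the paper argues equivalently that the compositum of the other splitting fields is unramified at the branch points of $f_i$, so $f_i(X)-t$ cannot split there. Two further slips to note: pairwise linear disjointness of the $K_i$ is \emph{not} equivalent to $\mathbb{F}_2$-independence of the corresponding characters (consider $\mathbb{Q}(\sqrt{2}),\mathbb{Q}(\sqrt{3}),\mathbb{Q}(\sqrt{6})$), but only pairwise distinctness of branch loci is needed; and pairwise non-isomorphism of the $C_p$-covers alone would not yield $C_p^r$ -- you do need the full independence of Kummer classes, as you acknowledge at the end. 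Also, there is no ``general position'' freedom to exploit or reconcile: once i) is imposed, the branch points are pinned down, and their pairwise disjointness is automatic from the hypothesis on the $K_i$.
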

\begin{proof}
Begin with $g(X) = \frac{T_p(X)-1}{T_p(X)+1}$, i.e., $g$ equals the $p$-th Chebyshev polynomial up to fractional linear transformation. In particular, $\Mon(g) = AGL_1(p)$ and $\Mon_{\overline{\mathbb{Q}}}(g) = D_p$. The transformations have been applied only for convenience to move the critical values $\pm 1$ of $T_p$ to the critical values $0$ and $\infty$ of $g$. In particular, $g(X)=s$ is ramified over $s=0$ and $s=\infty$ with inertia group generated by an involution in $D_p$, and has exactly one more branch point ($s=1$, with inertia group generated by a $p$-cycle). 
Since $p\equiv 3$ mod $4$, the quotient $AGL_1(p)/D_p$ is of odd order, and hence the unique quadratic subextension $F/\mathbb{Q}(s)$ of $\mathrm{Split}(g(X)-s/\mathbb{Q}(s))$ is nonconstant. It is then necessarily ramified at $s=0$ and $s=\infty$, i.e., $F=\mathbb{Q}(\sqrt{cs})$ for some constant $c\ne 0$. Now, to show i), let $K_i=\mathbb{Q}(\sqrt{d_i})$ and set $s=cd_it^2$.  
Consider the extension $\mathbb{Q}(t,x)/\mathbb{Q}(t)$, where $x$ denotes a root of $g(X)-s$.
Since the quadratic extension $\mathbb{Q}(t)/\mathbb{Q}(s) = \mathbb{Q}(\sqrt{cd_is})/\mathbb{Q}(s)$ is ramified at $s=0$ and $s=\infty$, Abhyankar's lemma (e.g., \cite[Theorem 3.9.1]{St}), applied to the compositum of $\mathbb{Q}(t)$ and $\mathbb{Q}(x)$ over $\mathbb{Q}(s)$, 
 yields that $\mathbb{Q}(t,x)/\mathbb{Q}(t)$ is unramified at the preimages $t=0$ and $t=\infty$ of $s=0$ and $s=\infty$, and instead ramified (with inertia group generated by a $p$-cycle) only at the two preimages $t=\pm \sqrt{\frac{1}{cd_i}}$ of the third branch point $s=1$ of $g(X)=s$. Hence, by the Riemann-Hurwitz formula, 
 $\mathbb{Q}(t,x)$ is of genus $0$. Being furthermore an odd degree extension of the rational function field $\mathbb{Q}(t)$ (hence, having a divisor of odd degree), it must itself be a rational function field, i.e., a root field of $f_i(X)-t$ for a suitable rational function $f_i\in \mathbb{Q}(X)$. The ramification type shows furthermore that $f_i$ is linearly related, over $\mathbb{C}$, to $X^p$, compare Proposition \ref{prop:gal_genus0}. Finally, 
 %consider the unique quadratic subextension of the extension $\textrm{Split}(f_i(X)-t)/\mathbb{Q}(t)$, i.e., the field $\mathbb{Q}(t)(\sqrt{\Delta_i})$, where $\Delta_i$ is the discriminant of $f_i(X)-t$. 
 since $\mathrm{Split}(f_i(X)-t)$
 is the compositum of $\mathbb{Q}(t)=\mathbb{Q}(\sqrt{cd_is})$ and the Galois closure of $\mathbb{Q}(x)/\mathbb{Q}(s)$, it contains the biquadratic subextension $\mathbb{Q}(\sqrt{cd_is}, \sqrt{cs})/\mathbb{Q}(s) = \mathbb{Q}(t, \sqrt{d_i})/\mathbb{Q}(s)$, whence $\mathrm{Split}(f_i(X)-t)/\mathbb{Q}(t)$ factors through the constant extension $\mathbb{Q}(\sqrt{d_i})(t)/\mathbb{Q}(t)$. This shows i).

 Furthermore, since each $f_i$ constructed in this way is linearly related over $\mathbb{C}$ to $X^p$, it follows readily that $\mathrm{Gal}(\prod_{i=1}^r (f_i(X)-t)/\overline{\mathbb{Q}}(t)) \le C_p^r$. If the containment were proper, then some $f_i(X)-t$ would have to split in $F_i:=\mathrm{Split}(\prod_{j\ne i}(f_j(X)-t)/\overline{\mathbb{Q}}(t))$. However, since the fields $K_i =\mathbb{Q}(\sqrt{d_i})$ are pairwise linearly disjoint, the sets of branch points $t=\pm \sqrt{\frac{1}{cd_i}}$ of the $f_i$ are disjoint as well, i.e., $F_i/\overline{\mathbb{Q}}(t)$ is unramified at the branch points of $f_i$, and thus $f_i(X)-t$ certainly cannot split in $F_i$. This shows ii) and completes the proof.
 %
%\marginpar{BK: I don't understand why $f_{i}$ being of odd degree is important for the function field being rational. Don't you only need it for concluding that $f_{i}-s$ remains irreducible over $\mathbb{Q}(t)$? JK: Because a genus-$0$ function field is rational iff it has a divisor of odd degree. See edit.}
% \marginpar{Might be preferable to avoid fiddling around with branch points.}
 %
 \end{proof}
Functions $f_i$ as constructed in Lemma \ref{lem:twist} are also known as R\'edei functions, cf., e.g., \cite[Section 5]{GMS}.
\begin{proof}[Proof of Lemma \ref{lem:many}]
Choose pairwise linearly disjoint quadratic number fields $K_i=\mathbb{Q}(\sqrt{d_i})$, $i=1,\dots, r$, and set $F_i:=K_i(t)$. 
%Shouldn't those depend on f_0=T_p?
Associated to each $K_i$ choose a degree-$p$ rational function $f_i\in \mathbb{Q}(X)$ as constructed in Lemma \ref{lem:twist}. Furthermore, let $f_0:=T_p(X)$, and let $G:=\mathrm{Gal}(\prod_{i=0}^{r} (f_i(X)-t)/ \mathbb{Q}(t)) \le AGL_1(p)^{r+1}$. 
By construction, $G$ contains $C_p^{r+1}$. %Actually, D_p^{r+1}...
Consider the (unique) quadratic subextension $F_0/\mathbb{Q}(t)$ of the splitting field of $f_0(X)-t$. As in the proof of Lemma \ref{lem:cheb}, $F_0$ is a rational function field, of the form $F_0=\mathbb{Q}(\sqrt{\mu(t)})$ for a degree one rational function $\mu\in \mathbb{Q}(t)$. In total, the splitting field $\Omega:=\mathrm{Split}(\prod_{i=0}^{r} (f_i(X)-t)/ \mathbb{Q}(t))$ contains a quadratic subfield $F_{r+1}:=\mathbb{Q}(\sqrt{d_1\cdots d_r \mu(t)})$, which is thus the splitting field of $f_{r+1}(X)-t$ for a suitable quadratic rational function $f_{r+1}\in \mathbb{Q}(X)$. We now claim that every element of $G$ fixes a root of some $f_i(X)-t$, $i\in \{0,\dots,r+1\}$. Indeed, the only elements of $G$ fixing no root of $f_i(X)-t$ for any $0\le i\le r$ are the ones acting as a disjoint product of $p$-cycles in $AGL_1(p)^{r+1}$, but since these are necessarily of odd order, they must fix the roots of $f_{r+1}(X)-t$. Hence, $\{f_0,\dots, f_{r+1}\}$  locally represents $\mathbb{Q}$ by Lemma \ref{lem:transl}. We next claim that no proper subset of $\{f_0,\dots, f_{r+1}\}$ does so. Indeed, $G$ viewed in its action on the roots of $\prod_{i=0}^{r} (f_i(X)-t)$, contains $C_p^{r+1}$, and hence contains a fixed point free element, i.e., $\{f_0,\dots, f_{r}\}$ does not locally represent $\mathbb{Q}$. To see that $\{f_0,\dots, f_{r+1}\}\setminus\{f_i\}$ does not locally represent $\mathbb{Q}$ either for any $0\le i\le r$, fix such an index $i$ and choose $\sigma\in G$ fixing $K_i$ pointwise, but not fixing $F_j$ for any $j\in \{0,\dots, r\}\setminus\{i\}$. Such $\sigma$ exists due to the fields $F_0,\dots, F_r$ being pairwise linearly disjoint quadratic extensions of $\mathbb{Q}(t)$ (indeed, for $F_1,\dots, F_r$, this is by assumption, and $F_0$ is moreover the only nonconstant extension of $\mathbb{Q}(t)$ among the $F_i$, $i\in \{0,\dots, r\})$. But then, by construction, $\sigma$ moves $\sqrt{d_1\cdots d_r \mu(t)}\in F_{r+1}$, i.e., does not fix the roots of $f_{r+1}(X)-t$. This shows the claim and concludes the proof.
\end{proof}

\subsection{Functions with Galois closure of genus $1$}
\label{sec:genus1}
%\marginpar{Refer to \cite{GMS} for definedness over $\mathbb{Q}$.}
Here we show that (geometrically indecomposable) rational functions $f(X)\in \mathbb{Q}(X)$ such that the splitting field of $f-t$ is of genus $1$ occur as part of a set locally representing $\mathbb{Q}$ as long as their degrees are sufficiently large.
Such functions arise from isogenies or endomorphisms of elliptic curves, as outlined in detail, e.g., in \cite[Section 6]{GMS}.
\begin{theorem}
\label{thm:genus1}
Let $f(X)\in \mathbb{Q}(X)$ be a geometrically indecomposable rational function of degree $>163$ such that the splitting field of $f(X)-t$ is of genus $1$.
Then $f$ is part of a minimal set of (at most four) rational functions locally representing $\mathbb{Q}$.
\end{theorem}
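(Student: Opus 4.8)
The plan is to invoke the group-theoretic reformulation in Lemma \ref{lem:transl}: I must produce finitely many $g_1,\dots,g_s\in\mathbb{Q}(X)$ such that, in the Galois group of $(f(X)-t)\prod_j(g_j(X)-t)$ over $\mathbb{Q}(t)$, every element fixes a root of one of the factors, while no proper subset has this covering property. First I would pin down the shape of $f$. Since $\deg(f)>163$ and $\mathrm{Split}(f(X)-t/\mathbb{Q}(t))$ has genus $1$, Proposition \ref{prop:gal_genus1} with Remark \ref{rem:gal_genus1} forces $\deg(f)=p^2$ (the degree-$p$ case requires $p\le163$ over $\mathbb{Q}$), with $A:=\Mon(f)=A\Gamma L_1(\mathbb{F}_{p^2})$ acting on $\mathbb{F}_{p^2}$, geometric group $G=(C_p\times C_p)\rtimes C_d$ for $d\in\{3,4,6\}$, and $p\not\equiv1\pmod d$; a short computation shows $d\mid p+1$ in each case, so $C_d\le\mathbb{F}_{p^2}^{\times}$ consists of norm-$1$ elements. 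The decisive step is to list the fixed-point-free elements of $A$ in its action $x\mapsto ax^{p^e}+b$: an element with Frobenius part $e=0$ is fixed-point-free iff it is a nontrivial translation ($a=1,\,b\neq0$), and one with $e=1$ is fixed-point-free iff the $\mathbb{F}_p$-linear map $x\mapsto ax^p$ has eigenvalue $1$, i.e. $N(a)=a^{p+1}=1$ (with $b$ off the corresponding line).

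The crucial observation is that both families lie in the kernel of the quadratic character $\chi$ of $A$ given by $\chi(ax^{p^e}+b)=\left(\tfrac{N(a)}{p}\right)$: translations have $N(1)=1$, and the fixed-point-free Frobenius elements have $N(a)=1$. Moreover $\chi$ is trivial on $G$ (as $C_d$ is contained in the norm-$1$ subgroup, and $G$ carries no Frobenius part), so $\chi$ is ``geometric''; its fixed field inside $\mathrm{Split}(f(X)-t)$ is a \emph{constant} extension $\mathbb{Q}(\sqrt{D})(t)$ for a genuine quadratic field $\mathbb{Q}(\sqrt{D})$, and every fixed-point-free element of $A$ acts trivially on $\sqrt{D}$. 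A naive fix — adjoining a quadratic function $g$ with $\mathrm{Split}(g(X)-t)=\mathbb{Q}(\sqrt{D\,(t^2-1)})$ — fails, because $\sqrt{t^2-1}$ is geometrically independent of $\mathrm{Split}(f(X)-t)$, so a translation admits a lift acting by $-1$ on $\sqrt{t^2-1}$, hence by $-1$ on $\sqrt{D(t^2-1)}$, and escapes.

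To repair this I would adjoin a Chebyshev polynomial $T_q$ for a prime $q\equiv3\pmod4$ chosen so that $\mathbb{Q}(\cos(2\pi/q))$ is linearly disjoint from the constant field of $f$. Then $\sqrt{t^2-1}$ lies in $\mathrm{Split}(T_q(X)-t)$ (its nonconstant quadratic subextension, ramified only at $t=\pm1$) and is no longer free: its sign is read off from the quadratic character $\psi$ of $\Mon(T_q)=AGL_1(q)$. The set $\{f,\,T_q,\,g\}$, with $g$ realizable over $\mathbb{Q}$ exactly as $f_3$ in Lemma \ref{lem:cheb} (since $\mathbb{Q}(\sqrt{D(t^2-1)})$ is a rational function field, having a rational place above $t=1$), is then locally representing: given $\tilde\sigma$ whose restriction to $\mathrm{Split}(f(X)-t)$ is fixed-point-free, let $\tau$ be its restriction to $\mathrm{Split}(T_q(X)-t)$; if $\tau\in\ker\psi$ then $\tilde\sigma$ fixes a root of $g$ (the sign of $\sqrt{D(t^2-1)}$ equals $\chi(\tilde\sigma)\psi(\tau)=1\cdot1$), whereas if $\tau\notin\ker\psi$ then $\tau$ is not a $q$-cycle and hence fixes a root of $T_q$. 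Minimality follows by exhibiting, for each proper subset, a fixed-point-free common element — pairing a nontrivial translation of $f$ with a $q$-cycle kills $\{f,T_q\}$, and the geometric freedom of $\sqrt{t^2-1}$ kills $\{f,g\}$ and $\{T_q,g\}$ — which is exactly where linear disjointness of the constant fields is used. This produces a minimal locally representing set of three, in particular at most four, functions containing $f$.

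The hard part is the interplay just described. The fixed-point-free translations lie inside the geometric group and in the kernel of \emph{every} quadratic character, so they cannot be caught by any quadratic subextension of $\mathrm{Split}(f(X)-t)$ by itself; the construction must simultaneously manufacture a nonconstant quadratic (to get an honest degree-$2$ function over $\mathbb{Q}$) and rigidify it against these translations. The Chebyshev companion does double duty — supplying the nonconstant quadratic while its own fixed-point-free elements, the $q$-cycles, are confined to $\ker\psi$ and so act as the fallback catcher. Checking that the signs $\chi$, $\psi$ and their product align as required, uniformly across $d\in\{3,4,6\}$, and that the constant fields can be made linearly disjoint so that minimality holds, is the technical heart of the argument.
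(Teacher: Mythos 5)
Your proposal is correct, and it takes a genuinely different route from the paper's own proof. The paper never determines which semilinear elements of $A=A\Gamma L_1(\mathbb{F}_{p^2})$ are fixed point free; it only uses that fixed-point-free elements are either translations (which fix the biquadratic constant field $\mathbb{Q}(\sqrt{\alpha},\sqrt{\beta})$) or elements outside $AGL_1(\mathbb{F}_{p^2})$ (which move $\sqrt{\alpha}$). It then catches the second family with an auxiliary degree-$q$ R\'edei function $f_1$, built via Lemma \ref{lem:twist} so that $\sqrt{\alpha}\in\mathrm{Split}(f_1(X)-t)$ (forcing such elements to restrict to even-order, hence point-fixing, elements of $AGL_1(q)$), and catches translations with the pair $\{f_2,T_q\}$ exactly as in Lemma \ref{lem:many}; this yields the four-element set $\{f,f_1,f_2,T_q\}$, with minimality obtained cheaply from the fact that $\{f_1,f_2,T_q\}$ is not locally representing, so any minimal subset must contain $f$. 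Your key observation --- that fixed point freeness of $x\mapsto ax^{p}+b$ forces $N(a)=1$, so that \emph{all} fixed-point-free elements of $A$ lie in the kernel of the single quadratic character $\chi(ax^{p^e}+b)=\left(\tfrac{N(a)}{p}\right)$, whose fixed field is one constant quadratic $\mathbb{Q}(\sqrt{D})(t)$ --- is sharper than anything the paper exploits, and it eliminates the R\'edei function entirely: you obtain a three-element set $\{f,T_q,g\}$. The paper's route buys reuse of ready-made machinery (Lemmas \ref{lem:twist} and \ref{lem:many}); your route buys a smaller set and a cleaner conceptual explanation of why a single quadratic subfield suffices.

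Two points in your write-up need care, though neither is fatal. First, constants: the nonconstant quadratic subextension of $\mathrm{Split}(T_q(X)-t)/\mathbb{Q}(t)$ is $\mathbb{Q}\bigl(\sqrt{c(t-1)/(t+1)}\bigr)$ for some constant $c$, so $g$ must be built from $D\cdot c(t-1)/(t+1)$ rather than literally $D(t^2-1)$; your appeal to the construction in Lemma \ref{lem:cheb} implicitly handles this, and the rational point above $t=1$ survives the constants. Second, your claim that $\sqrt{t^2-1}$ is ``geometrically independent'' of $\mathrm{Split}(f(X)-t)$ is not literally true in all cases: for $d\in\{4,6\}$ the geometric group $G$ has an index-$2$ subgroup, and if the corresponding branch points of $f$ happened to lie at $\pm 1$, then $\overline{\mathbb{Q}}\cdot\mathrm{Split}(f(X)-t)$ would contain $\overline{\mathbb{Q}}(t)(\sqrt{t^2-1})$. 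What your argument actually needs --- linear disjointness of $\mathrm{Split}(f(X)-t)$ and $\mathrm{Split}(T_q(X)-t)$ over $\mathbb{Q}(t)$, so that translations paired with arbitrary prescribed behaviour on the Chebyshev side exist --- is nevertheless true: since $(p^2-1)/d$ is even (the same parity fact the paper uses to produce the biquadratic constant extension), every quadratic character of $A$ kills $G$, so every quadratic subextension of $\mathrm{Split}(f(X)-t)/\mathbb{Q}(t)$ is constant and thus never equals the ramified extension $\mathbb{Q}\bigl(\sqrt{c(t-1)/(t+1)}\bigr)$; moreover any nontrivial Galois subextension of $\mathrm{Split}(T_q(X)-t)/\mathbb{Q}(t)$ is either constant (excluded by your choice of $q$) or contains that ramified quadratic (since $AGL_1(q)$ has cyclic abelianization and every nontrivial normal subgroup contains $C_q$). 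With this repair, your catching argument and all three two-element minimality checks go through, giving a minimal locally representing set of three functions containing $f$.
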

\begin{proof}
From Proposition \ref{prop:gal_genus1} together with Remark \ref{rem:gal_genus1}, such a function $f\in \mathbb{Q}(X)$ is necessarily of degree $p^2$ for a prime $p$, with exactly three branch points, of ramification indices $(3,3,3)$, $(2,4,4)$ or $(2,3,6)$. Furthermore, again by Remark \ref{rem:gal_genus1}, 
the geometric monodromy group $G:=\mathrm{Gal}(f(X)-t/\overline{\mathbb{Q}}(t))$ is isomorphic to $(C_p\times C_p)\rtimes C_d\le AGL_1(\mathbb{F}_{p^2})$ with $d\in \{3,4,6\}$. Moreover, $p\equiv -1$ mod $d$, and the
arithmetic monodromy group $A:=\mathrm{Gal}(f(X)-t/\mathbb{Q}(t))$ is isomorphic to $(C_p\times C_p)\rtimes (C_{p^2-1}\rtimes C_2) = A\Gamma L_1(\mathbb{F}_{p^2})$.

 Since $A/G\cong C_{(p^2-1)/d} \rtimes C_2$ and $(p^2-1)/d$ is necessarily even, $A/G$ has a quotient $C_2\times C_2$, i.e., the splitting field of $f(X)-t$ contains a biquadratic extension $\mathbb{Q}(\sqrt{\alpha}, \sqrt{\beta})/\mathbb{Q}$. Now let us look at the fixed points of elements in $A$. Since $AGL_1(\mathbb{F}_{p^2}) \cong (C_p\times C_p)\rtimes C_{p^2-1}\le S_{p^2}$ is a Frobenius group\footnote{I.e., no nonidentity element fixes more than one point, while elements with one fixed point do exist.} with Frobenius kernel $\mathbb{F}_{p^2}\cong C_p\times C_p$, all its elements outside of $C_p\times C_p$ have exactly one fixed point. In particular, the only fixed point free elements of $A$ are the non-identity elements of $C_p\times C_p$ (in particular fixing $\mathbb{Q}(\sqrt{\alpha}, \sqrt{\beta})$ pointwise), and (certain) elements outside of the index-$2$ normal subgroup $N:=AGL_1(\mathbb{F}_{p^2})$.  Say that (the constant extension) $\mathbb{Q}(\sqrt{\alpha})(t)$ is the fixed field of $N$, so that the latter elements necessarily move $\sqrt{\alpha}$.  

Now choose any prime $q\equiv 3$ mod $4$. 
Using Lemma \ref{lem:twist}, choose rational functions $f_0$ and $f_1\in \mathbb{Q}(X)$, linearly related to $X^q$ over $\mathbb{C}$ and with $\mathrm{Split}(f_i(X)-t/\mathbb{Q}(t))$ containing $\sqrt{\alpha\beta}$ resp.\ $\sqrt{\alpha}$ for $i=0$ resp.\ $i=1$. 
Then as in the proof of Lemma \ref{lem:many}, one obtains a minimal set $\{f_0,f_1,f_2, T_q\}$ of rational functions locally representing $\mathbb{Q}$, where moreover $f_2$ is quadratic with $\mathrm{Split}(f_2(X)-t/\mathbb{Q}(t))=\mathbb{Q}(\sqrt{\beta\mu(t)})$ with $\mu(t)\in \mathbb{Q}(t)$ the degree-$1$ rational function such that $\mathrm{Split}(T_q(X)-t/\mathbb{Q}(t))\supset \mathbb{Q}(\sqrt{\mu(t)})$.
We claim that we can replace $f_0$ by $f$ without losing the locally representing property. This then finishes the proof, since $\{f_1,f_2, T_q\}$ is not locally representing by construction. To show the claim, we need to show that all automorphisms $\sigma$ of $\mathrm{Split}((f(X)-t)(f_1(X)-t)(f_2(X)-t)(T_q(X)-t))/\mathbb{Q}(t)$ not fixing a root of $f(X)-t$ must necessarily fix a root of $f_i(X)-t$ for some $i\in \{1,2\}$ or of $T_q(X)-t$. But as noted above, there are only two possibilities for such $\sigma$: either $\sigma(\sqrt{\alpha}) =  -\sqrt{\alpha}$, in which case $\sigma$ necessarily restricts to an element of of even order  (i.e., having a fixed point) in $\mathrm{Gal}(f_1(X)-t/\mathbb{Q}(t)) \cong AGL_1(q)$. Or $\sigma(\sqrt{\beta})=\sqrt{\beta}$, in which case there are two further possibilities: if $\sigma$ also fixes $\sqrt{\mu(t)}$, then it fixes the roots of $f_2$; if, conversely, it does not fix $\sqrt{\mu(t)}$, then it acts as an element of even order on the roots of $T_q$, and hence has a fixed point.
This concludes the proof.
\end{proof}

\subsection{Proof of Theorem \ref{thm:converse}}
The examples collected in the previous sections are already enough to derive Theorem \ref{thm:converse}. We summarize the reasoning behind this. Firstly, a geometrically indecomposable rational function $f(X)\in \mathbb{C}(X)$ whose splitting field is still of genus $0$ is known to be linearly related (over $\mathbb{C}$) to $X^p$, $T_p(X)$ (for some prime $p$) or to an indecomposable subcover of the tetrahedral, octahedral or icosahedral Galois covers $\mathbb{P}^1\to \mathbb{P}^1$, see Proposition \ref{prop:gal_genus0}. The cases $X^p$ and $T_p(X)$ are covered by Lemma \ref{lem:cheb}. 
%(with geometric monodromy group $A_4$, $S_4$ and $A_5$, and three branch points of ramification indices $(2,3,3)$, $(2,3,4)$ and $(2,3,5)$ respectively); cf., e.g., \cite[Theorem I.6.2]{MM}. 
%\marginpar{Move to preliminaries!}
In fact, the icosahedral cases are covered by Example \ref{ex:sporadic} and the octahedral ones by Lemma \ref{lem:symmetric}a); the interested reader may want to verify that the tetrahedral case can be handled similarly, so that indeed the cases with splitting field of genus $0$ occur (up to $\mathbb{C}$-linear equivalence) without exception. Combine this with Theorem \ref{thm:genus1} to readily obtain Theorem \ref{thm:converse}.

 %We conclude with some open questions around Theorem \ref{thm:converse}.
\begin{question}
%\begin{itemize}
%\item[a)] 
Can one always choose $\tilde{f}=f$ in \ref{thm:converse}?
 In several individual cases, this is actually true. However, notably the following case might be problematic: Let $p\equiv 3$ mod $4$ be a prime and $f(X)\in \mathbb{Q}(X)$ a degree-$p$ rational function totally ramified at exactly two algebraically conjugate points, with monodromy group $\Mon(f) = C_p\rtimes C_{(p-1)/2}$. Such a function (called a R\'edei function) exists for all such $p$ and can be constructed via pullback from the Chebyshev polynomial $T_p$ similarly as the functions $f_i$ in Lemma \ref{lem:many}. Evidently, such $f$ is linearly related to $X^p$ over $\mathbb{C}$, but not over $\mathbb{Q}$. Via choice of $p$ in suitable residue classes, we may assume the smallest prime divisor of $\frac{p-1}{2}$ to be arbitrarily large, and it is even expected that this value itself is a (Germain) prime infinitely often. Our approaches above, which rely crucially on the existence of certain small degree subfields in the Galois closure, do not carry over to this case; is it still possible to find an alternative construction?
\end{question}

\textbf{Acknowledgement:} The first-named author acknowledges the support of the Austrian Science Fund (FWF): W1230. The second-named author was supported by the National Research Foundation of Korea (NRF Basic Research Grant RS-2023-00239917). We thank Danny Neftin and Shai Rosenberg for some helpful comments, as well as the two referees for many valuable suggestions for improvement.

\end{document}